\documentclass[a4paper, reqno, 12pt]{amsart}
\usepackage{a4wide}
\usepackage[english]{babel}
\usepackage[normalem]{ ulem }
\usepackage{soul}
\usepackage{graphicx}

\usepackage[T1]{fontenc}
\usepackage[utf8]{inputenc}
\usepackage{amsmath,amsfonts,amssymb,amsopn,amscd,amsthm, mathrsfs}

\usepackage{gensymb}
\usepackage{comment}
\usepackage{dsfont}
\usepackage{bm}
\usepackage{graphicx}
\usepackage{xcolor}
\usepackage[colorlinks]{hyperref}
\usepackage{epigraph}
\usepackage{enumerate}
\usepackage{xargs}

\usepackage{marginnote}

\title[ To spike or not to spike: the whims of the Wonham filter ]
      { To spike or not to spike: the whims of the Wonham filter in the strong noise regime}

\author[Bernardin]{\textsc{C\'edric Bernardin}} 
\address{Faculty of Mathematics, National Research University Higher School of Economics -- 6 Usacheva, 119048 Moscow, Russia}
\email{{\tt sedric.bernardin@gmail.com}}

\author[Chhaibi]{\textsc{Reda Chhaibi}}
\address{Institut de math\'ematiques, UMR5219, Universit\'e de Toulouse, CNRS, UPS, F-31062 Toulouse Cedex 9, France}
\email{{\tt reda.chhaibi@math.univ-toulouse.fr}}

\author[Najnudel]{\textsc{Joseph Najnudel}} 
\address{University of Bristol, Beacon House, Queens Road, Bristol, BS8 1QU, UK}
\email{{\tt joseph.najnudel@bristol.ac.uk}}

\author[Pellegrini]{\textsc{Cl\'ement Pellegrini}}
\address{Institut de math\'ematiques, UMR5219, Universit\'e de Toulouse, CNRS, UPS, F-31062 Toulouse Cedex 9, France}
\email{{\tt clement.pellegrini@math.univ-toulouse.fr}}

\date{\today}

\allowdisplaybreaks[4]





\def\half{\frac{1}{2}}

\def\B{{\mathbb B}}

\def\Q{{\mathbb Q}}
\def\R{{\mathbb R}}

\def\H{{\mathbb H}}
\def\S{{\mathbb S}}

\def\X{\mathbb{X}}
\def\L{\mathbb{L}}

\def\P{{\mathbb P}}
\def\E{{\mathbb E}}

\def\X{{\mathbb X}}


\def\Ec{{\mathcal E}}
\def\Fc{{\mathcal F}}
\def\Gc{{\mathcal G}}

\def\Lc{{\mathcal L}}

\def\Oc{{\mathcal O}}
\def\Pc{{{\mathcal P}}}

\def\xb{{\mathbf x}}
\def\yb{{\mathbf y}}

\setlength{\footskip}{2cm}

\newtheorem{thm}{Theorem}[section]
\newtheorem{proposition}[thm]{Proposition}

\newtheorem{question}[thm]{Question}

\newtheorem{lemma}[thm]{Lemma}

\newtheorem{rmk}[thm]{Remark}

\numberwithin{equation}{section}
\numberwithin{figure}{section}


\renewcommand{\imath}{i}



\let\oldtocsection=\tocsection
 
\let\oldtocsubsection=\tocsubsection
 
\let\oldtocsubsubsection=\tocsubsubsection
 
\renewcommand{\tocsection}[2]{\hspace{0em}\oldtocsection{#1}{#2}}
\renewcommand{\tocsubsection}[2]{\hspace{2em}\oldtocsubsection{#1}{#2}}
\renewcommand{\tocsubsubsection}[2]{\hspace{4em}\oldtocsubsubsection{#1}{#2}}

\setcounter{tocdepth}{2}

\begin{document}

\begin{abstract}

We study the celebrated Shiryaev-Wonham filter \cite{wonham1964} in its historical setup  where the hidden Markov jump process has two states. We are interested in the weak noise regime for the observation equation. Interestingly, this becomes a strong noise regime for the filtering equations.

Earlier results of the authors show the appearance of spikes in the filtered process, akin to a metastability phenomenon. This paper is aimed at understanding the smoothed optimal filter, which is relevant for any system with feedback. In particular, we exhibit a sharp phase transition between a spiking regime and a regime with perfect smoothing.
\end{abstract}

\medskip

\keywords{}
\renewcommand{\subjclassname}{%
  \textup{2010} Mathematics Subject Classification}
\subjclass[2010]{Primary 60F99; Secondary 60G60, 81P15}

\maketitle

\hrule
\tableofcontents
\hrule

\section{Introduction}

Filtering Theory adresses the problem of estimating a \textit{hidden process} $\xb = (\xb_t \; ; \; t\ge 0)$ which can not be directly observed. At hand, one has access to an \textit{observation process} which is naturally correlated to $\xb$. The most simple setup, called the ``signal plus noise'' model,  is the one where the observation process $\yb^\gamma = (\yb^\gamma_t \; ; \; t\ge 0)$ is of the form
\begin{align}
\label{eq:observation}
d\yb^\gamma_t = \xb_t dt+\cfrac{1}{\sqrt{\gamma}} \ dB_t 
\end{align}
where $B = (B_t \; ; \; t\ge 0)$ is a standard Wiener process and $\gamma>0$.  Moreover it is natural to assume that the noise is intrinsic to the observation system, so that the Brownian motion $B = B^\gamma$ has no reason of being the same for different values of $\gamma$. See Figure \ref{fig:processes_xy} for an illustration which visually highlights the difficulty of recognizing a drift despite Brownian motion fluctuations. In this paper we shall focus on the case where $(\textbf x_t \; ; \; t\ge 0)$ is a pure jump Markov process on $\{0,1\}$ with c\`adl\`ag trajectories. We denote $\lambda p$ (resp. $\lambda (1-p)$) the jump rate between $0$ and $1$ (resp. between $1$ and $0$), with $p\in (0,1)$ and $\lambda>0$.  This is the historical setting of the celebrated Wonham filter \cite[Eq. (19)]{wonham1964}.

In the mean square sense, the best estimator taking value in $\{0,1\}$ at time $t$ of $\textbf x_t$, given the observation $({\textbf y}^\gamma_s)_{s\leq t}$, is equal to 
\begin{align}
\label{eq:hatx}
{\hat \xb^{\gamma}}_t = & \ \mathds{1}_{\left\{ \pi^\gamma_t > \half \right\} }
\end{align}
where $\pi^\gamma_t$ is the conditional probability 
\begin{align}
\label{def:pi}
\pi^\gamma_t := & \ \P\left( {\bf x}_t = 1 \ | \ \left({\bf y}^\gamma_s\right)_{s\le t} \right) \ .
\end{align}

Our interest lies in the situation where the intensity $1/\sqrt{\gamma}$ of the observation noise is small, i.e. $\gamma$ is large. At first glance, one could argue that weak noise limits for the observation process are not that interesting because we are dealing with extremely reliable systems since they are subject to very little noise. As such, one would naively expect that observing $\yb^\gamma$ allows an optimal recovery of $\xb$ as $\gamma \rightarrow \infty$, via a straightforward and stable manner. This paper aims at demonstrating that this regime is more surprizing and interesting from both a theoretical and a practical point of view.  

\begin{figure}[htp]
\includegraphics[scale=0.5]{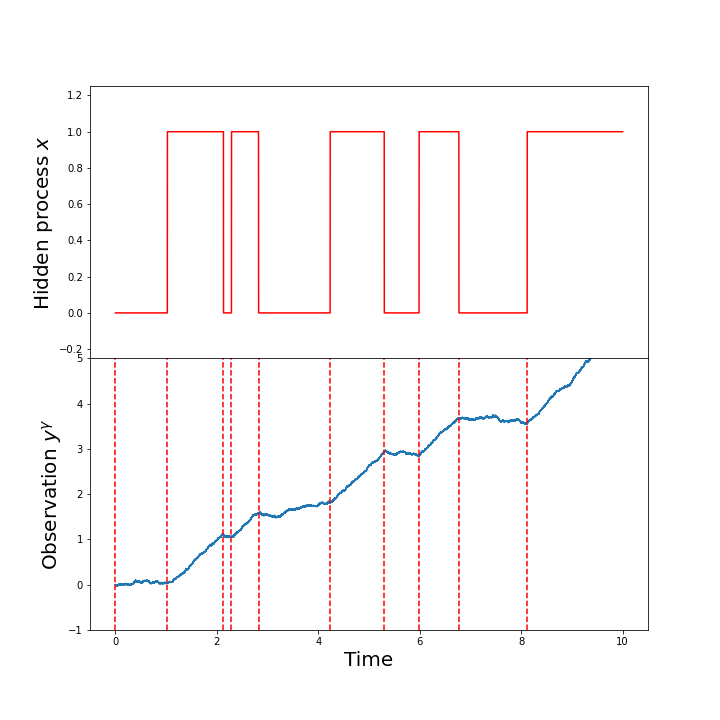}
\caption{Numerical simulation of the hidden process $\xb$ and the observation process $\yb^\gamma$ for $\gamma = 10^2$. The challenge is to infer the drift of $\yb^\gamma$, in spite of Brownian noise and in a very short window. Parameters are $\lambda=1.3$ and $p=0.4$. There are $10^6$ time steps to discretize $[0,10]$. The code is available at the online repository
\quad \quad \quad \quad \quad \quad \quad \quad \quad \quad \quad \quad \quad \quad \quad \quad
\quad \quad \quad \quad \quad \quad \quad \quad \quad \quad \quad \quad 
\url{https://github.com/redachhaibi/Spikes-in-Classical-Filtering}}
\label{fig:processes_xy}
\end{figure}

\medskip

{\bf A motivating example. } Let us describe a simple situation that falls into that scope and motivates our study. Consider for example a single classical bit -- say, inside of a DRAM chip. The value of the bit is subject to changes, some of which are caused by CPU instructions and computations, some of which are due to errors. The literature points to spontaneous errors due to radiation, heat and various conditions \cite{dram_study}. The value of that process is modeled by the Markov process $\xb $ as defined above.  Here, the process ${\bf y}^\gamma$ is the electric current received by a sensor on the chip, which monitors any changes. Any retroaction, for example code correction in ECC memory \cite{ecc_study1, ecc_study2}, requires the observation during a finite window $\delta>0$. And the reaction is at best instantaneous. For anything meaningful to happen, everything depends thus on the behavior of:
\begin{align}
\label{eq:def_process}
\pi_t^{\delta, \gamma} := & \ \P\left( {\bf x}_{t-\delta} = 1 \ | \ \left({\bf y}^\gamma_s\right)_{s\le t}  \right) \ ,
\end{align}
and instead to consider the estimator ${\bf{\hat x}}_t^\gamma$ given by Eq. \eqref{eq:hatx}, we are left with the estimator
\begin{equation*}
\label{eq:hatxdelta}
{\hat \xb^{\delta, \gamma}}_t = \mathds{1}_{\left\{ \pi^{\delta,\gamma}_t > \half \right\} } \ .
\end{equation*}
From an engineering point of view, it is the interplay between different time scales which is important in order to design a system with high performance: if the noise is weak, how fast can a feed-back response be? For a given process ${\mathbf z}=({\mathbf z}_{t}\; ; t\ge 0)$ with values in $[0,1]$ we denote the hitting time of $(\half,1)$ by $T ({\mathbf z}):=\inf \left\{ t\ge 0\; ; \; {\mathbf z}_t > \half \right\}$. Assume for example that initially ${\bf x}_0=0$. For a given time $t>0$, a natural problem is to estimate, as $\gamma \to \infty$, the probability to predict a false value of the bit given its value remains equal to $0$ during the time interval $[0,t]$, i.e.
\begin{equation}
\label{eq:ptt}
{\mathbb P} \left( T ({\hat \xb}^{\delta,\gamma}) \le t \;  \vert \; T ({\bf x}) >t \right) \ .
\end{equation}

\subsection{Informal statement of the result.}
A consequence of the results of this paper is the precise identification of the regimes  $\delta:= \delta(\gamma)$ for which the probability in \eqref{eq:ptt} vanishes or not as $\gamma \to \infty$:\\

\begin{itemize}
\item If $\limsup_{\gamma\to \infty} \delta(\gamma) \tfrac{\gamma}{\log \gamma} <2$, i.e. $\delta$ is too small,  the retroaction/control system can be surprised by a so--called spike, causing a misfire in detecting the regime change and the limiting error probability in Eq. \eqref{eq:ptt} is equal to $1-\exp \left( -\lambda p t \right)$;\\
\item If $\liminf_{\gamma\to \infty} \delta(\gamma) \tfrac{\gamma}{\log \gamma} >8$, i.e. $\delta$ is sufficiently large, the estimator will be very good at detecting jumps of the Markov process ${\bf x}$, the limiting error probability in Eq. \eqref{eq:ptt} vanishing. However the reaction time will deteriorate.
\end{itemize}

\bigskip
In the previous statement the presence of the number $8$ is due to a technical estimate and it is almost clear that it could be replaced by $2$. We refer the reader to Section \ref{subsection:rmk:Cge2} and Figure \ref{fig:filter_smoothed}, even if the numerical simulations are not totally convincing. In this remark lies the only estimate which limits the extension of the claim to the case $C>2$. Hence, there is no doubt that the transition occurs for $\delta(\gamma) =2\  \frac{\gamma}{\log \gamma}$. 

\bigskip

While the literature usually focuses on $L^2$ considerations for filtering processes, we focus on this article on pathwise properties of the filtering process under investigation when $\gamma \rightarrow \infty$. Indeed, it is clear that the question addressed just above cannot be answered in an $L^2$ framework only.

\bigskip

Let us now present in some informal way the reasons for which we have this difference of behavior. As it will be recalled later the process $\pi^\gamma = \left( \pi^\gamma_t \ ; \ t \geq 0 \right)$ satisfies in law
\begin{equation}
\label{eq:sde}
d\pi^\gamma_t = - \lambda \left( \pi^\gamma_t - p \right) dt +  \sqrt{\gamma}  \pi^\gamma_t \left( 1 - \pi^\gamma_t \right) dW_t \ ,
\end{equation}
where $W=(W_t \; ; \; t\ge 0)$ is a Brownian motion with a \textit{now strong parameter} $\sqrt{\gamma}$ in front of it. This is the so called Shiryaev-Wonham filtering theory \cite{wonham1964, LS01, vanhandel}. As shown in \cite{bernardin2018spiking}, when $\gamma$ goes to infinity the process $\pi^\gamma$ converges in law to an unusual and singular process in a suitable topology (see Figure \ref{fig:filter}).  Indeed as exhibited in the figure, the limiting process is the Markov jump process $({\bf x}_t \; ; \; t\ge 0)$ but decorated with vertical lines, called spikes, whose extremities are distributed according an inhomogeneous point Poisson process. As we can observe on Figure \ref{fig:filter_smoothed}, if $\delta$ is sufficiently large, the spikes in the process $\pi^{\gamma, \delta}$ are suppressed while if $\delta$ is sufficiently small they survive. The spikes are responsible of the non vanishing error probability in Eq. \eqref{eq:ptt} since they are interpreted by the estimator ${\hat \xb}^{\delta, \gamma}$ as a jump from $0$ to $1$ of the process $\bf x$. The fact that the transition between the two regimes is precisely $2 \frac{\log \gamma}{\gamma}$ is more complicated to explain without going into computational details. Building on our earlier results, we examine hence in this paper the effect of smoothing and the relevance of various time scales required for filtering, smoothing and control in the design of a system with feedback.


\begin{figure}[htp]
\includegraphics[scale=0.5]{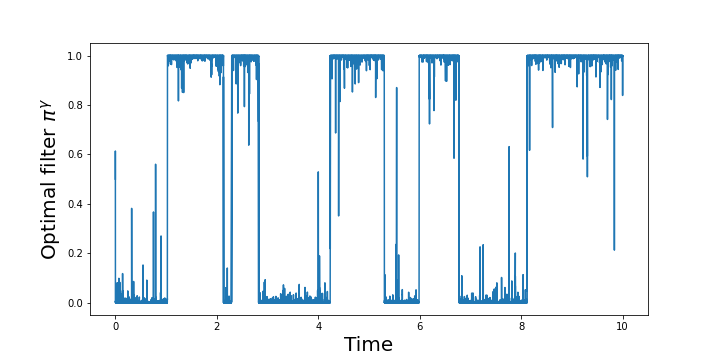}
\caption{``The whims of the Wonham filter'': Informally, on a very short time interval, it is difficult to distinguish between a change in the drift of $\yb^\gamma$ and an exceptionnal time of Brownian motion. The figure shows a numerical simulation of the process $\left( \pi_t^\gamma \ ; \ t \geq 0 \right)$ for the same realization of $\xb$ as Fig. \ref{fig:processes_xy}. Same time discretization. This time we chose the larger $\gamma=10^4$ to highlight spikes.}
\label{fig:filter}
\end{figure}

\begin{figure}[htp]
\includegraphics[scale=0.5]{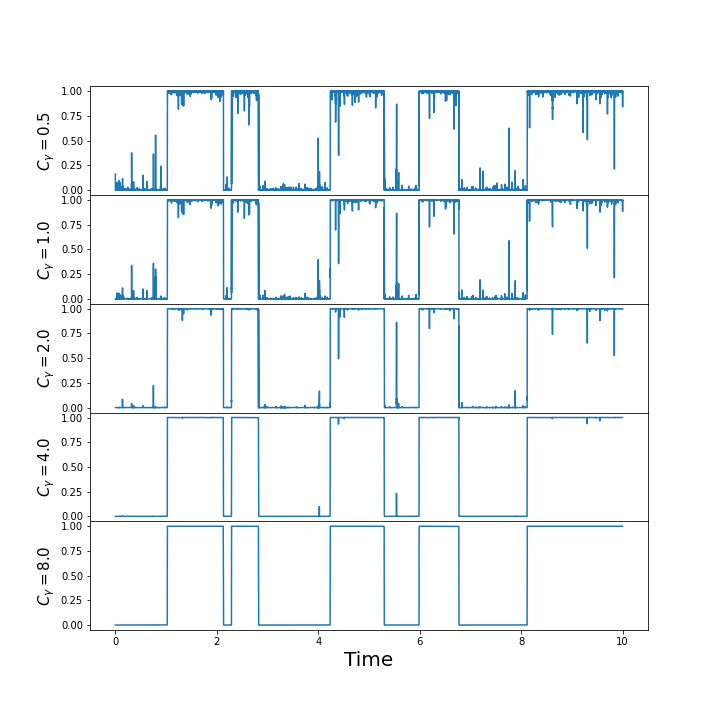}
\caption{Numerical simulation of the process $( \pi_t^{\delta, \gamma} \ ; \ t \geq 0)$ for the same realisation of $\xb$ as Fig. \ref{fig:processes_xy}. Same time discretisation. We have $\gamma=10^4$ and $\delta_\gamma = C \frac{\log \gamma}{\gamma}$, with $C \in \{ \half , 1, 2, 4, 8 \}$. }
\label{fig:filter_smoothed}
\end{figure}

\medskip


\begin{rmk}[Duality between weak and strong noise]
Notice that the observation equation \eqref{eq:observation} has a factor $\frac{1}{\sqrt{\gamma}}$, while the filtering equation \eqref{eq:sde} has a factor $\sqrt{\gamma}$. This is a well-known duality between the weak noise limit in the observation process and the strong noise limit in the filtered state.

In fact, when analyzing the derivation of the Wonham-Shiryaev filter, this is simply due to writing:
$$ d{\bf y}^\gamma_t 
   = \frac{1}{\sqrt{\gamma}} \left( dB_t + \sqrt{\gamma} \, \xb_t \, dt \right) 
   =: \frac{1}{\sqrt{\gamma}} dW^\Q_t \ ,$$
and using the Girsanov transform to construct a new measure $\Q$, for the Kallianpur-Streibel formula, under which $W^\Q$ is a Brownian motion -- \cite[Chapter 7]{vanhandel}.
\end{rmk}

\subsection{Literature review of filtering theory in the \texorpdfstring{$\gamma \rightarrow \infty$}{strong noise} regime.}
The understanding of the behavior of the classical filter for jump Markov processes with small Brownian observation noise has attracted some attention in the 90's. Most of the work there is focused on the long time regime \cite{wonham1964,Khasminski-LazarevaAoS92, Khasminskii-ZeitouniSPA,Atar-ZeitouniSIAM97,Atar-ZeitouniAIHP97,assaf1997estimating}, by studying for example stationary measures, asymptotic stability or transmission rates. In the case where the jump Markov process is replaced by a diffusion process with a signal noise, possibly small, \cite{PicardSIAM,Atar-Zeitouni-SCL98} study the efficiency (in the $L^2$ sense and at fixed time) of some asymptotically optimal filters. In \cite{Pardoux-Zeitouni05} are obtained quenched large deviations principles for the distribution of the optimal filter at a fixed time for one dimensional nonlinear filtering in the small observation noise regime -- see also \cite{Sumith-AmitSIAM22}.  In a similar context Atar obtains in \cite{AtarAoP98} some non-optimal upper bounds for the asymptotic rate of stability of the filter. 

Going through the aforementioned literature one can observe that the term $\log \gamma /\gamma$ already appears in those references. Indeed the quantities of interest include the (average) long time error rate \cite[Eq. (1.4)]{assaf1997estimating}
$$ \alpha^*
   =
   \lim_{t\to\infty}\frac{1}{t}\int_0^t\min(\pi_s^\gamma,1-\pi_s^\gamma) \, ds$$
or the probability of error in long time (\cite{wonham1964} and \cite[Theorem 1']{Khasminskii-ZeitouniSPA})
$$ \mathcal P_{err}(\gamma)
   = \lim_{t\to\infty}\inf_{\zeta\in L_{\infty}(\mathcal F_t^{\yb })}\mathbb P(\zeta\neq\xb_t)
   = \lim_{t\to\infty} \P(\hat\xb_t\neq\xb_t)
$$
or the long time mean squared error \cite{golubev2000}
$$ \Ec_{mse}(\gamma)
   = \lim_{t\to\infty}\inf_{\zeta\in L_{\infty}(\mathcal F_t^{\yb })}\mathbb E(\zeta-\xb_t)^2
   = \lim_{t\to\infty}\mathbb E(\pi_t^\gamma-\xb_t)^2 \ .$$ 
Here $\Fc^{\yb}$ denotes the natural filtration of $\yb = \yb^\gamma$. These quantities are shown to be of order $\tfrac{\log \gamma}{\gamma}$ up to a constant which is related to the invariant measure of $\xb$ and some relative entropy but which is definitively not $2$ -- see \cite[Eq. (3)]{golubev2000}. Note that all these quantities are of asymptotic nature and their analysis goes through the invariant measure. Beyond the appearance of the quantity $\tfrac{\log \gamma}{\gamma}$, which is fortuitous, our results are of a completely different nature since we want to obtain a sharp result on a fixed finite time interval. Also, due to the spiking phenomenon and the singularity of the involved processes, there is no chance that the limits can be exchanged.

To the best of the authors' knowledge, this paper is the first of its kind to aim for a trajectorial description of the limit, in the context of classical filtering theory. However, the spiking phenomenon has first been identified in the context of quantum filtering \cite[Fig. 2]{mabuchi2009} and more specifically, for the control and error correction of qubits. The spiking phenomenon is already seen as a possible source of error where correction can be made while no error has occurred. To quote \cite[Section 4]{mabuchi2009}, when discussing the relevance of the optimal Wonham filter in the strong noise regime, it ``is not a good measure of the information
content of the system, as it is very sensitive to the whims of the filter''. 

Then, in the studies of quantum trajectories{\footnote{Mathematically speaking quantum trajectories are (multi)-dimensional diffusion processes with a special form of the drift and volatility.}} with strong measurement, a flurry of developments have recently taken place, following the pioneering works of Bauer, Bernard and Tilloy \cite{tilloy2015spikes, bauer2016zooming}. Strong interaction with the environment, which is natural in the quantum setting, corresponds to a strong noise in the quantum trajectories.

Note that, the SDEs are the same when comparing classical to quantum filtering. Nevertheless, the noise has a fundamentally different nature. And there is no hidden process $\xb$ in the quantum setting. See \cite{benoist2021emergence, bernardin2018spiking} for a recent account and more references on the quantum literature.


\section{Statement of the problem and Main Theorem}

\subsection{The Shiryaev-Wonham filter}
Let us start by presenting the Shiryaev-Wonham filter and refer to \cite{wonham1964, LS01, vanhandel} for more extensive material. 

\subsubsection{General setup} In this paragraph only, we present the Shiryaev-Wonham  filter on $n$ states, which will allow to highlight the structural aspects of Eq. \eqref{eq:sde}. In general, one considers a Markov process $\xb = \left( \xb_t \ ; \ t \geq 0 \right)$ on a finite state space $E=\{x_1, x_2, \dots, x_n\}$, of cardinal $n\ge 2$, and a continuous observation process ${\bf y}^\gamma$ of the usual additive form ``signal plus noise'':
\begin{align*}
\label{eq:general_observation}
 d{\bf y}^\gamma_t := & \ G\left( \xb_t \right) dt + \frac{1}{\sqrt{\gamma}} \,  dB_t \ .
\end{align*}
Here $G: E \rightarrow \R$ is a function taking distinct values for identifiability purposes. The filtered state is given by:
$$ \rho^\gamma_t (x_i)
   :=
   \P\left( \xb_t = x_i \ \vert \ \left({\bf y}^\gamma_s\right)_{s\le t} \right) \ .
$$
The generator of $\xb$ is denoted by $\Lc$. The claim of the Shiryaev-Wonham's filter is that the filtering equation becomes:
\begin{equation}
\label{eq:filtering}
\begin{split}
     d \rho^\gamma_t (x_i)
 = & \ \sum_{j} \left\{\rho^\gamma_t (x_j)  \Lc (x_j, x_i) - \rho^\gamma_t (x_i)  \Lc (x_i, x_j) \right\} dt + \sqrt{\gamma}\,  \rho^\gamma_t (x_i) \left\{ G(x_i) - \langle \rho^\gamma_t, G \rangle \right\} dW_t \ .
\end{split}
\end{equation}
Here $W$ is a $\Fc^{\yb}$-standard Brownian motion called in the filtering literature the innovation process. The quantity $\langle \rho^\gamma_t, G \rangle$ denotes the expectation of $G$ with respect to the probability measure $\rho_t^\gamma$.  Throughout the paper, we only consider $E = \{0,1\}$, i.e. the two state regime.

%

\subsubsection{Two states}
\label{subsec:2states}
 In this case, w.l.o.g $E=\{0,1\}$, and all the information is contained in 
 $$ \pi_t^\gamma := \rho^\gamma_t (1)= \P\left( \xb_t = 1 \ | \ \left({\bf y}^\gamma_s\right)_{s\le t} \right)= 1-\rho_t^\gamma (0)  \ .$$
 Making explicit in this case Eq. \eqref{eq:filtering} we observe that it has exactly the same type of dynamic as the one studied in the authors' previous paper \cite{bernardin2018spiking}. Using the notation
$$ \Lc = \begin{pmatrix}
           -\lambda_{0,1} &  \lambda_{0,1} \\
            \lambda_{1,0} & -\lambda_{1,0} \\
           \end{pmatrix} 
$$
we have indeed that Eq. \eqref{eq:filtering} can be rewritten as
\begin{align*}
d\pi^\gamma_t 
   =
   - \lambda
   \left( 
   \pi^\gamma_t - p
   \right) dt
   +
   \sqrt{\gamma} \, \sigma\,  \pi^\gamma_t \left( 1 - \pi^\gamma_t \right)
   dW_t \ ,
\end{align*}
where
\begin{equation}
\label{eq:parameters}
 \lambda = \lambda_{0,1} + \lambda_{1,0} \ ,\quad p = \lambda_{1,0}/\lambda  \ , \quad  \sigma = G^1 - G^0 \ .
\end{equation}
%
Without loss of generality, we shall assume $\sigma=1$ in the rest of the paper. Also $(G^0, G^1) = (0,1)$. In the end, our setup is indeed given by Eq. \eqref{eq:observation} and \eqref{eq:sde}, which we repeat for convenience:
\begin{align}
\label{eq:observation_repeated}
 d{\bf y}^\gamma_t = & \ \xb_t dt + \frac{1}{\sqrt{\gamma}} dB_t \ , \\
\label{eq:sde_repeated}
d\pi^\gamma_t 
   = & 
   - \lambda
   \left( 
   \pi^\gamma_t - p
   \right) \, dt
   +
   \sqrt{\gamma}\,  \pi^\gamma_t \left( 1 - \pi^\gamma_t \right)
   \, dW_t \ .
\end{align}

\begin{rmk}
\label{rmk:invariant_measure}
The invariant probability measure $\mu$ of the Markov process $\xb$ solves 
$$ \Lc^* \mu = 0
   \Longleftrightarrow
   \mu = 
   \begin{bmatrix}
   p \\ 1- p
   \end{bmatrix} \ .
$$
Without any computation, this is intuitively clear, as setting $\gamma \rightarrow 0$ yields an extremely strong observation noise and no noise in the filtering equation:
$$ d\pi^{\gamma=0}_t 
   =
   - \lambda (\pi_t^{\gamma=0} - p) dt
$$
whose asymptotic value is $p$. Informally, this says that, in the absence of information, the best estimation of the law $\Lc(\xb_t)$ in long time is the invariant measure. This is essentially the content of \cite[Theorem 4]{chigansky2006}, which holds for a Shiryaev-Wonham filter with any finite number of states.
\end{rmk}

\subsubsection{Innovation process} The innovation appearing in the SDE is the $\Fc^{\yb}$-Brownian motion $W$ obtained as:
$$ dW_t = \sqrt{\gamma} \left( d\yb_t - \langle G, \rho_t \rangle dt \right)
        = dB_t + \sqrt{\gamma} \left( G(\xb_t) - \langle G, \rho_t \rangle \right) dt \ .
$$
With the simplifying assumption that $(G^0, G^1) = (0,1)$, we obtain:
\begin{equation}
\label{eq:innovation}
dW_t = dB_t + \sqrt{\gamma} \left( \xb_t - \pi_t^\gamma \right) dt \ .
\end{equation}

\subsection{Trajectorial strong noise limits and the question} Eq. \eqref{eq:sde} falls in the scope of \cite{bernardin2018spiking} which treats the strong noise limits of a large class of one-dimensional SDEs. There the authors give a general result for SDEs not necessarily related to filtering theory. More precisely, the result is two-fold. On the one hand, the process $\pi^\gamma$ converges in a weak ``Lebesgue-type'' topology to a Markov jump process. On the other hand, if one considers a strong ``uniform-type'' topology it is possible to capture the convergence to a spike process.

\bigskip

{\bf Topology:} Fixing an arbitrary horizon time $H>0$ . The weaker topology uses the distance:
\begin{align}
\label{def:mz_topology}
{\rm d}_{\L}(f,g) := \int_0^H \left( \left| f(t) - g(t) \right| \wedge 1 \right) dt \ ,
\end{align}
inducing the Lebesgue $\L^0$ topology on the compact set $[0,H]$. This distance is the usual distance that turns the convergence in probability into a metric convergence. Notice that the previous paper \cite{bernardin2018spiking} deals with an infinite time horizon. Of course, the restricted topology there is then  the same as here.

The stronger topology is defined by using the Hausdorff distance for graphs. In this paper, a graph is nothing but a closed (hence compact) subset $\Gc=\bigsqcup_{t\in [0,H]} (\{t\} \times \Gc_t)$ of  $[0,H] \times \mathbb [0,1]$, where $\Gc_t=\{x\in [0,1]\; ; \; (t,x)\in \Gc\}$ denotes the slice of the graph $\Gc$ at time $t$. The Hausdorff distance ${\rm d}_\H (\Gc,\Gc')$  between two graphs $\Gc$ and $\Gc'$ is then defined by:

\begin{align}
\label{def:hausdorff}
   {\rm d}_{\H}(\Gc,\Gc')
 := & \ \inf\left\{ \varepsilon>0 \ |
               \ \Gc \subset \Gc' + \varepsilon \B \ ,
               \ \Gc' \subset \Gc + \varepsilon \B
               \right\} = \max_{\substack{z\in \Gc\\ z^\prime \in \Gc^\prime}} \left\{ {\rm d} (z, \Gc^\prime) , {\rm d} (z^\prime , \Gc) \right\} 
\end{align}
where $\B$ is the unit ball of ${\mathbb R}^2$ and, for $a \in \mathbb R^2$ and $B \subset \mathbb R^2$, ${\rm d} (a, B)= \inf_{b\in B} \Vert a-b \Vert$. A straightforward consequence that will be used many times in the sequel is the equivalence
\begin{align}
\label{eq:hausdorff_two_conditions}
                {\rm d}_{\H}(\Gc,\Gc') \leq \varepsilon \ 
\Longleftrightarrow & \  \left\{\begin{array}{cc}
\forall(t,g )\in\Gc , \ \exists (s,g') \in \Gc', \ \vert t-s\vert+\vert g-g'\vert\leq\varepsilon \ , \\
\forall(t,g')\in\Gc', \ \exists (s,g ) \in \Gc , \ \vert t-s\vert+\vert g-g'\vert\leq\varepsilon \ .
\end{array}
\right.
\end{align}

In particular dealing with the Hausdorff distance requires treating those two conditions. This distance is the appropriate one which allows to capture the spiking process. Indeed, when interpreting in terms of processes, this distance corresponds to the distance associated to the convergence of the graph of the processes. Spikes are then understood as vertical lines for the limit of $\pi^\gamma$. Those lines are of Lebesgue measure zero and cannot be enlightened by smoothing measure of type ${\rm d}_{\L}$. Note that the topologies usually used for the convergence of stochastic processes, such as the Skorohod topology are useless in this context. This is due to the singularity of the limiting processes as it as been pointed out in \cite{bernardin2018spiking}.

\begin{figure}[ht]
\includegraphics[scale=0.4]{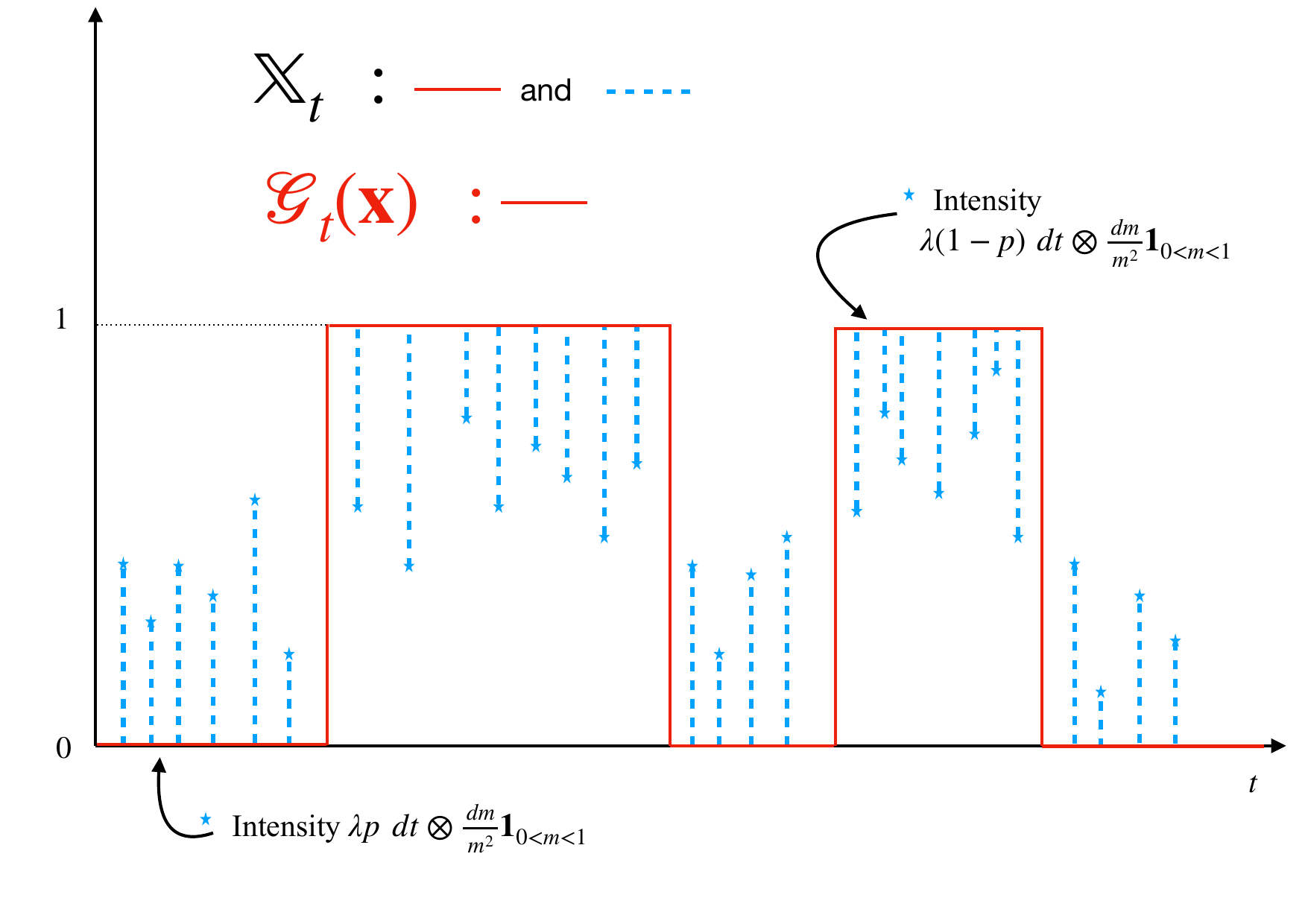}
\caption{Sketch of the two limiting processes. The graph $\mathcal G (\xb)$ of the hidden Markov pure jump process $\xb$ is in red (solid lines), and the set-valued spike process $\X$ is the union of the blue graph (dashed lines) and red graph (solid lines). }
\label{fig:sketch}
\end{figure}

\bigskip

{\bf Limiting processes:}  For the sake of completeness, we recall the construction of the spiking process which is described in \cite{bernardin2018spiking}. 

First at hand we have the process, $(\textbf x_t \; ; \; t\ge 0)$ which is a pure jump Markov process on $\{0,1\}$ with c\`adl\`ag trajectories. Recall that $\lambda p$ (resp. $\lambda (1-p)$) are the jump rate between $0$ and $1$ (resp. between $1$ and $0$), with $p\in (0,1)$ and $\lambda>0$. The initial position is sampled according to
$$ \P\left( \xb_0 = 1 \right) = 1-\P\left( \xb_0 = 0 \right) = x_0 \ .$$

Secondly, we shall define the spike process as a {\it set-valued} random path $\X: \R_+ \rightarrow \Pc\left( [0,1] \right)$,  where $\Pc\left( [0,1] \right)$ is the power set of the segment $[0,1]$. For a comprehensive sketch, see Figure \ref{fig:sketch}. It is formally obtained as follows:
\begin{itemize}
\item Sample a random initial segment $\X_0$ as
$$
   \X_0 = \left\{
\begin{array}{ll}

[Y,1] \textrm{ when $\xb_0 = 1$,} & \P\left( Y \in dy \ | \ \xb_0 = 1\right) = \frac{1-x_0}{x_0} \mathds{1}_{\{0 < y < x_0\}} \frac{dy}{(1-y)^2} \ ,\\

[0,Y] \textrm{ when $\xb_0 = 0$,} & \P\left( Y \in dy \ | \ \xb_0 = 0\right) = \frac{x_0}{1-x_0} \mathds{1}_{\{x_0 < y < 1\}} \frac{dy}{y^2} \ .
\end{array}
   \right.
$$
\item Sample $\left( t, \widetilde{M}_t \right)$ following a Poisson point process on $\R_+ \times [0,1]$ with intensity
$$ \left( dt \otimes \frac{dm}{m^2} \mathds{1}_{\{ 0 \leq m < 1 \}} \right) \ .$$
Then, by progressively rescaling time for $\left( t, \widetilde{M}_t \right)$ by
$$
   \left\{
\begin{array}{ccc}
\ \frac{1}{\lambda p} & \textrm{ when } & \xb_t = 0 \ ,\\
\ \frac{1}{\lambda(1- p)} & \textrm{ when } & \xb_t = 1 \ ,\
\end{array}
   \right. \
$$
we obtain a Poisson point process with random intensity which we denote by $\left(t, M_t \right)$. 

\item Finally
$$
   \X_t = \left\{
\begin{array}{ccc}
\ [0, M_t]   & \textrm{ if } & \xb_t = \xb_{t^-} = 0 \ ,\\
\ [1-M_t, 1] & \textrm{ if } & \xb_t = \xb_{t^-} = 1 \ ,\\
\ [0, 1]     & \textrm{ if } & \xb_t \neq \xb_{t^-} \ .
\end{array}
   \right.
$$
\end{itemize}

Notice that by virtue of $(t, M_t)$ being a Poisson point process with finite intensity away from zero, there are no points with the same abscissa and only countably many $t \in \R_+$ with $M_t>0$. If there is no point with abscissa $t \in \R_+$, then it is natural to set $M_t = 0$ and thus $\X_t = \{\xb_t\}$. This convention is natural in the sense that morally, there is always by default a point $(t, 0)$ because of the infinite measure at zero. 

\medskip

In the sequel, we call ``jump'' the set $\{t\} \times [0,1]$ when $t\ge 0$ corresponds to a jump of the process $\xb$. We also call ``spike'' a non-trivial slice $\mathbb X_t \ne \{\xb_t\}$ at a given time $t\ge 0$ which is not a jump. The ``size'' of a spike is then given by the Lebesgue measure of $\mathbb X_t$.

\bigskip

{\bf A mathematical statement:} The convergences were established thanks to a convenient (but fictitious) coupling of the processes $\left( \pi^\gamma \ ; \ \gamma > 0 \right)$ for different values of $\gamma>0$. In contrast, the filtering problem has a natural coupling for different $\gamma>0$ which is given by the observation equation \eqref{eq:observation}. In this context, let us state a small adaptation of an already established result. The precise notion of graph is given in Section \ref{subsec:FormalStatement}.

\begin{thm}[Variant of the Main Theorem of \cite{bernardin2018spiking}]
\label{thm:aap}
There is a two-faceted convergence.
\begin{enumerate}[1.]
\item {\bf In probability, for the $\L^0$ topology}, we have the following convergence in probability:
$$ \left( \pi^\gamma_ t \ ; 0 \leq t \leq H \right) 
   \stackrel{\gamma \rightarrow \infty}
            {\longrightarrow}
   \left( \xb_ t \ ; 0 \leq t \leq H \right) \ .
$$
Equivalently, that is to say
$$ \forall \varepsilon>0,
   \quad
   \lim_{\gamma \rightarrow \infty}
   \P\left( {\rm d}_\L(\pi^\gamma, \xb) > \varepsilon \right) = 0 \ .
$$
Here $\xb_0 \in \{0,1\}$ is Bernoulli distributed with parameter $\pi^\gamma_0$, the initial condition{\footnote{We assume $\pi^\gamma_0$ independent of $\gamma$.}} of $\pi^\gamma$.\\

\item {\bf In law, for the Hausdorff topology for graphs}, we have that the graph of $\mathcal G(\pi^\gamma)$ of $\left(\pi^{\gamma}_t \ ; 0\leq t\leq H \right)$ converges in law to a spike process ${\mathbb X}=\bigsqcup_{t\in [0,H]} (\{t\}\times {\mathbb X}_t)$ described by Fig. \ref{fig:sketch}.\\

\item {\bf In law, for the Hausdorff topology for graphs}, we have that the graph $\mathcal G ({\hat \xb}^\gamma)$ of $\hat \xb^\gamma = \left( \hat \xb^{\gamma}_t \ ; 0 \leq t \leq H \right)$, defined by Eq. \eqref{eq:hatx}, converges in law to another singular random closed set $\hat{\mathbb X}=\bigsqcup_{t\in [0,H]} (\{t\}\times \hat{\mathbb X}_t)$ where
$$\hat{\X}_t = \{0,1\} \mathds 1_{\{ \X_t \cap [0,\half)  \neq \emptyset,  
                                     \X_t \cap (\half, 1] \neq \emptyset
                                  \}}
             + \{0\} \mathds 1_{\{ \X_t \subset [0,\half) \}}
             + \{1\} \mathds 1_{\{ \X_t \subset (\half, 1] \}} \ .
$$
\end{enumerate}
\end{thm}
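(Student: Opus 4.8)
For the first assertion I would give a self-contained energy estimate (this is also what makes the \emph{natural} coupling of the $\pi^\gamma$'s through \eqref{eq:observation_repeated} suffice, so that no fictitious coupling is needed), while the last two assertions reduce to the main theorem of \cite{bernardin2018spiking}.

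\textbf{Assertion (1).} I would work on the original probability space carrying $\xb$ and $\yb^\gamma$, where $\pi^\gamma$ solves \eqref{eq:sde_repeated} driven by the innovation $W=W^\gamma$ of \eqref{eq:innovation}. Applying It\^o's formula to $\phi(\pi^\gamma_t)$ with $\phi(u):=u(1-u)$, and using $\langle\pi^\gamma\rangle_t=\gamma\int_0^t(\pi^\gamma_s)^2(1-\pi^\gamma_s)^2\,ds$ together with $\phi''\equiv-2$, then integrating on $[0,H]$ and taking expectations (the martingale part has a bounded integrand for fixed $\gamma$), one gets the key bound
\begin{equation*}
\gamma\,\E\int_0^H(\pi^\gamma_t)^2(1-\pi^\gamma_t)^2\,dt
=\E[\phi(\pi^\gamma_0)]-\E[\phi(\pi^\gamma_H)]-\lambda\,\E\int_0^H(1-2\pi^\gamma_t)(\pi^\gamma_t-p)\,dt\le \tfrac14+\lambda H=:C,
\end{equation*}
since $0\le\phi\le\tfrac14$ and $|(1-2u)(u-p)|\le 1$ on $[0,1]$. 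Two uses of Cauchy--Schwarz on $[0,H]\times\Omega$ then give $\E\int_0^H\pi^\gamma_t(1-\pi^\gamma_t)\,dt\le(HC/\gamma)^{1/2}$. Finally, because $\xb_t\in\{0,1\}$ and $\pi^\gamma_t=\E[\xb_t\mid\Fc^{\yb}_t]$, conditioning yields $\E[(\xb_t-\pi^\gamma_t)^2]=\E[\pi^\gamma_t(1-\pi^\gamma_t)]$, whence (the truncation in ${\rm d}_{\L}$ being inactive since $|\xb_t-\pi^\gamma_t|\le1$)
\begin{equation*}
\E[\,{\rm d}_{\L}(\pi^\gamma,\xb)\,]=\E\int_0^H|\xb_t-\pi^\gamma_t|\,dt\le\Bigl(H\,\E\int_0^H(\xb_t-\pi^\gamma_t)^2\,dt\Bigr)^{1/2}\le H^{1/2}(HC/\gamma)^{1/4}\xrightarrow[\gamma\to\infty]{}0,
\end{equation*}
and Markov's inequality concludes. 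The law of $\xb_0$ is forced, not assumed: $\pi^\gamma_0=\P(\xb_0=1\mid\yb^\gamma_0)=\P(\xb_0=1)$, so $\xb_0\sim{\rm Bern}(\pi^\gamma_0)$, consistently with $\pi^\gamma_0$ being independent of $\gamma$.

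\textbf{Assertion (2).} Since the innovation $W$ is an $\Fc^{\yb}$-Brownian motion and pathwise uniqueness holds for \eqref{eq:sde_repeated} (its coefficients are Lipschitz on the invariant domain $[0,1]$, with inward drift and vanishing diffusion at the endpoints), the law of the path $(\pi^\gamma_t;\,0\le t\le H)$ equals the law of the unique $[0,1]$-valued solution of \eqref{eq:sde_repeated} started from $\pi^\gamma_0$ and driven by a standard Brownian motion. This is exactly the process studied in \cite{bernardin2018spiking}; although that reference uses the time horizon $[0,\infty)$, the ${\rm d}_{\H}$-convergence of the graph over the compact window $[0,H]$ is nothing but the restriction of theirs, the topology being unchanged. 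Hence the graph of $\pi^\gamma$ converges in law, for ${\rm d}_{\H}$, to the spike process $\X$ of Figure~\ref{fig:sketch}, with baseline started from the same ${\rm Bern}(\pi^\gamma_0)$ law as in (1). Being a statement about the one-parameter marginal law of $\pi^\gamma$, it is insensitive to the way the $\pi^\gamma$ are coupled across $\gamma$.

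\textbf{Assertion (3) and the main obstacle.} I would deduce this from (2) by the continuous mapping theorem. The graph of $\hat\xb^\gamma=\mathds 1_{\{\pi^\gamma>1/2\}}$ is a deterministic functional $\Psi$ of the graph of $\pi^\gamma$: on graphs of continuous functions it is the closure of the slice-wise thresholding written in the statement, and $\Psi$ extends to all compact graphs by the same slice-wise formula followed by taking the closure. Since ${\rm graph}(\hat\xb^\gamma)=\Psi({\rm graph}(\pi^\gamma))$ and $\Psi(\X)=\hat\X$, it remains to show $\Psi$ is ${\rm d}_{\H}$-continuous at $\X$ almost surely. For this I would use the fine structure of $\X$ from \cite{bernardin2018spiking}: almost surely the baseline is $\{0,1\}$-valued (hence at distance $\tfrac12$ from the critical level), only finitely many spikes on $[0,H]$ have height exceeding $\tfrac12$, and no spike has height exactly $\tfrac12$; consequently there is a random $\delta_0>0$ with no spike tip in $(\tfrac12-\delta_0,\tfrac12+\delta_0)$, so that $\X$ meets the level $\{y=\tfrac12\}$ only through the shafts of the finitely many tall spikes, transversally. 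On this full-measure event a short compactness argument shows the crossing/non-crossing pattern of $\X$ is stable under ${\rm d}_{\H}$-small perturbations, giving the desired continuity. This last point is where I expect the real difficulty: $\Psi$ is genuinely discontinuous (a graph tangent to the level $\{y=\tfrac12\}$ is a point of discontinuity), so one must carve out with care the $\X$-almost-sure set of graphs on which $\Psi$ is continuous and verify, via the explicit inhomogeneous Poisson description of the spike endpoints in \cite{bernardin2018spiking}, that $\X$ avoids every tangential configuration with probability one. The rest is either a routine stochastic-calculus estimate (Assertion (1)) or a bookkeeping reduction to \cite{bernardin2018spiking} (Assertion (2)).
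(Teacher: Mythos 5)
Your proposal is correct. Items (2) and (3) take essentially the same route as the paper: for (2), the almost-sure Hausdorff convergence of \cite{bernardin2018spiking} under their coupling is transferred to convergence in law, which is coupling-independent (your weak/pathwise-uniqueness remark is the same bookkeeping step of identifying the law of $\pi^\gamma$ with the SDE studied there, restricted to $[0,H]$); for (3), both you and the paper apply the mapping theorem to the slice-wise thresholding map, with continuity at $\X$ guaranteed by the almost-sure absence of spikes of height exactly $\half$ -- the paper dispatches this in two lines, and your random-gap $\delta_0$ argument is precisely the verification hiding behind that remark. The genuine difference is in item (1). The paper never uses the filtering SDE there: it relies only on the $L^2$-optimality of the conditional expectation $\pi^\gamma_t$, comparing it to the crude moving-average estimator $z^\varepsilon_t=\varepsilon^{-1}\int_{t-\varepsilon}^t d\yb^\gamma_s$, which gives $\E|\pi^\gamma_t-\xb_t|^2\le 2\,\P(\xb\ \text{jumps in}\ [t-\varepsilon,t])+2/(\varepsilon\gamma)$, followed by dominated convergence. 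You instead apply It\^o's formula to $\pi^\gamma(1-\pi^\gamma)$ in Eq. \eqref{eq:sde_repeated}, obtaining $\gamma\,\E\int_0^H(\pi^\gamma_t)^2(1-\pi^\gamma_t)^2dt\le\tfrac14+\lambda H$, and combine this with the conditional-variance identity $\E(\xb_t-\pi^\gamma_t)^2=\E[\pi^\gamma_t(1-\pi^\gamma_t)]$ and Cauchy--Schwarz. Both arguments are sound (your stochastic-integral term has a bounded integrand for fixed $\gamma$, so its expectation indeed vanishes); yours buys a quantitative rate $\E\,{\rm d}_\L(\pi^\gamma,\xb)=O(\gamma^{-1/4})$, whereas the paper's argument is softer but more elementary and more robust, needing nothing beyond the definition of $\pi^\gamma$ as a conditional probability and the additive observation model -- neither the Shiryaev--Wonham equation nor the fact that the innovation is an $\Fc^{\yb}$-Brownian motion.
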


Notice that the first convergence is in the weaker Lebesgue-type topology and holds in probability, i.e. on the same probability space. The second and third convergences are in the stronger uniform-type topology, however they only hold in law, hence not necessarily on the same probability space.

\begin{proof}[Pointers to the proof]
The second point is indeed a direct corollary of \cite{bernardin2018spiking} since almost sure convergence after a coupling implies convergence in law, regardless of the coupling. Although this coupling will be used in the paper further down the road, the reader should not give it much thought for the moment.

The third point is also immediate modulo certain subtleties. Recalling that $\hat \xb_t^\gamma = \mathds{1}_{\{ \pi_t^\gamma > \half \}}$ and that the graph of $\pi^\gamma$ converges to the random closed set $\X$, it suffices to apply the Mapping Theorem \cite[Theorem 2.7]{billingsley2013convergence}. Indeed, a spike $\X_t \subset [0,1]$ is mapped to either $\{0\}$, $\{1\}$ or $\{0, 1\}$ when examining the range of the indicator $\mathds{1}_{\{\cdot > \half\}}$ on $\X_t$.  However, when invoking the Mapping Theorem, one needs to check that discontinuity points of the map $\mathds{1}_{\{\cdot > \half\}}$ have measure zero for the law of $\X$. This is indeed true since there are no spikes of height equal to $\half$ almost surely -- recall that the spike process $\X$ is described in terms of Poisson processes \cite{bernardin2018spiking}.

The first point, although simpler and intuitive, does not come from \cite{bernardin2018spiking}. In the case of filtering, the process $\xb$ is intrinsically defined, and we require the use of the specific coupling given by the additive model \eqref{eq:observation}. Let us show how the result is reduced to a single claim. The result is readily obtained from Markov inequality and the $L^1(\Omega)$ convergence:
$$ \lim_{\gamma \rightarrow \infty} 
   \E \ {\rm d}_\L\left( \pi^\gamma , \xb \right) = 0 \ .
$$
The above convergence itself only requires the definition of ${\rm d}_\L$ in Eq. \eqref{def:mz_topology}, Lebesgue dominated convergence theorem and the claim
\begin{align}
\label{eq:claim}
\forall t>0, \quad 
\lim_{\gamma \rightarrow \infty} \E \left| \pi_t^\gamma - \xb_t \right|^2 = 0 \ .
\end{align}
In order to prove Claim \eqref{eq:claim}, recall that by definition $\pi_t^\gamma$ is a conditional expectation:
\begin{align*} \pi_t^\gamma &=\ \P\left( {\bf x}_t = 1 \ | \ \left({\bf y}^\gamma_s\right)_{s\le t} \right)\\
& = \textrm{argmin}_{c \in \Fc_t^{\yb}}\  \E( \mathds{1}_{\xb_t = 1} - c )^2 \\
& = \textrm{argmin}_{c \in \Fc_t^{\yb}}\  \E( \xb_t - c )^2 
 \ .\end{align*}
At this stage, let $\varepsilon>0$ and let us introduce the process ${\mathbf z}^\varepsilon=({\mathbf z}^\varepsilon_t\ ; \ t\geq \varepsilon)$ defined for all $t\ge \varepsilon$ by
$${\mathbf z}^\varepsilon_t=\frac{1}{\varepsilon} \int_{t-\varepsilon}^t d\yb_s^\gamma \ .$$
This process is clearly $( \Fc_t^{\yb})_{t\geq \varepsilon}$ adapted, so for all $t\geq \varepsilon$, by definition of $\pi_t^\gamma$
\begin{align*}
   \E \left| \pi_t^\gamma - \xb_t \right|^2
   \leq & \
\E \left| {\mathbf z}^\varepsilon_t - \xb_t \right|^2 \\
  = & \
   \E \left| \frac{1}{\varepsilon} \int_{t-\varepsilon}^t d\yb_s^\gamma - \xb_t \right|^2 \\
   = & \
   \E \left| \frac{1}{\varepsilon} \int_{t-\varepsilon}^t \xb_s \, ds 
           - \xb_t 
           + \frac{1}{\varepsilon \sqrt{\gamma}} \int_{t-\varepsilon}^t dB_s
      \right|^2 \\
   \leq & \
   2 \E \left| \frac{1}{\varepsilon} \int_{t-\varepsilon}^t \xb_s \, ds 
           - \xb_t 
      \right|^2 
   + 
   2 \E \left| \frac{1}{\varepsilon \sqrt{\gamma}} \int_{t-\varepsilon}^t dB_s
      \right|^2 \\
   = & \
   2 \E \left| \frac{1}{\varepsilon} \int_{t-\varepsilon}^t (\xb_s - \xb_t) \, ds 
      \right|^2 
   + 
   \frac{2}{\varepsilon \gamma} \\
 \leq & \
   2 \E \left| \frac{1}{\varepsilon} \int_{t-\varepsilon}^t \mathds{1}_{\{\xb_s\neq \xb_t\}} \, ds 
      \right|^2 
   + 
   \frac{2}{\varepsilon \gamma}\\
   \leq & \ 
   2 \P\left( \xb \textrm{ jumps at least one time during } [t-\varepsilon, t] \right) 
   + 
   \frac{2}{\varepsilon \gamma} \ .
\end{align*}
Note that we have used that for $\varepsilon\leq s\leq t,$ 
$$\{\xb_s\neq \xb_t\} \subset \{\xb \textrm{ jumps at least one time during } [t-\varepsilon, t]\}.$$ Taking $\gamma \rightarrow \infty$ then $\varepsilon \rightarrow 0$ proves Claim \eqref{eq:claim}.
\end{proof}

We can now formally state the question of interest:
\begin{question}
For different regimes of $\delta = \delta_\gamma$ and $\gamma$, how do the spikes behave in the stochastic process \eqref{eq:def_process}? Basically, we need an understanding of the tradeoff between spiking and smoothing. The intuition is that there are two regimes:
\begin{itemize}
\item The slow feedback regime: the smoothing window $\delta$ is large enough so that the optimal estimator $\pi^{\delta, \gamma}$ correctly estimates the hidden process $\xb$.
\item The fast feedback regime: the smoothing window $\delta$ is too small so that $\pi^{\delta, \gamma}$ does not correctly estimate the hidden process $\xb$. One does observe the effect of spikes.
\end{itemize}
\end{question}

\subsection{Main Theorem}

Our finding is that there is sharp transition between the slow feedback regime and the fast feedback regime:
\begin{thm}[Main theorem]
\label{thm:main}
As long as $\delta_\gamma \rightarrow 0$, we have the convergence in probability, for the $\L^0$ topology, as in the first item of Theorem \ref{thm:aap}:
\begin{align}
\label{eq:cv_lesbesgue_of_smoothing}
 \left( \pi^{\delta_\gamma, \gamma}_t \ ; 0 \leq t \leq H \right) 
   \stackrel{\gamma \rightarrow \infty}
            {\longrightarrow}
   \left( \xb_ t \ ; 0 \leq t \leq H \right) \ .
\end{align}

However, in the stronger topologies, there exists a sharp transition when writing:
$$ \delta_\gamma = C \frac{\log \gamma}{\gamma} \ .$$
The following convergences hold in the Hausdorff topology on graphs in $[0,H] \times [0,1]$.

\begin{itemize}
\item (Fast feedback regime) If $C < 2$, smoothing does not occur and we have convergence in law to the spike process:
$$ \lim_{\gamma \rightarrow \infty} \pi^{\delta_\gamma, \gamma} = \X \ .$$
\item (Slow feedback regime) If $C > 8$, smoothing occurs and we have convergence:
$$ \lim_{\gamma \rightarrow \infty} \pi^{\delta_\gamma, \gamma} = \xb \ .$$
Observe that since we are dealing in this case with processes with c\`adl\`ag  paths, the convergence holds equivalently for the usual $M_2$-Skorohod topology and for the Hausdorff topology on graphs.
\end{itemize}
\end{thm}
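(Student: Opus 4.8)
This is a time‑shifted copy of the argument proving the first item of Theorem~\ref{thm:aap}. It suffices to show $\E|\pi^{\delta_\gamma,\gamma}_t - \xb_t|^2 \to 0$ for each fixed $t>0$: dominated convergence then gives $\E\,{\rm d}_\L(\pi^{\delta_\gamma,\gamma},\xb)\to 0$ and Markov's inequality the convergence in probability. Split $\E|\pi^{\delta,\gamma}_t-\xb_t|^2 \le 2\E|\pi^{\delta,\gamma}_t-\xb_{t-\delta}|^2 + 2\,\P(\xb\text{ jumps in }[t-\delta,t])$; the last term is $\le 2\lambda\delta\to 0$, while for the first we compare $\pi^{\delta,\gamma}_t$ (the $L^2$‑projection of $\xb_{t-\delta}$ onto $\Fc^\yb_t$) with the admissible $\Fc^\yb_{t-\delta}$‑measurable statistic $z^\varepsilon_t:=\frac1\varepsilon\int_{t-\delta-\varepsilon}^{t-\delta}d\yb^\gamma_s$ and run verbatim the chain of inequalities of the proof of Theorem~\ref{thm:aap} to obtain $\E|\pi^{\delta,\gamma}_t-\xb_{t-\delta}|^2 \le 2\,\P(\xb\text{ jumps in }[t-\delta-\varepsilon,t-\delta]) + \tfrac{2}{\varepsilon\gamma}$. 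Let $\gamma\to\infty$, then $\varepsilon\to0$.

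\textbf{A smoothing formula.} For the strong‑topology statements I would first derive an exact Bayesian decomposition. Writing $s:=t-\delta_\gamma$, the Kallianpur--Striebel formula under the reference measure $\Q$ of the Remark, together with the Markov property of $\xb$ and the independence $\xb\perp\yb^\gamma$ under $\Q$, yields
\[
 \operatorname{logit}\pi^{\delta_\gamma,\gamma}_t
 = \operatorname{logit}\pi^\gamma_s + \log\frac{h_t(1)}{h_t(0)},
 \qquad
 h_t(i):=\E\!\left[\exp\!\Big(\gamma\!\int_s^t\!\xb'_u\,d\yb^\gamma_u-\tfrac{\gamma}{2}\!\int_s^t\!\xb'_u\,du\Big)\ \Big|\ \xb'_s=i\right],
\]
with $\operatorname{logit}x=\log\frac{x}{1-x}$, $\xb'$ an independent copy of the jump chain, and the observation path frozen in the inner expectation. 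Expanding $h_t(i)$ by the number of jumps of $\xb'$ in $[s,t]$: the no‑jump term of $h_t(1)$ is exactly $\exp(\gamma(\yb^\gamma_t-\yb^\gamma_s)-\tfrac{\gamma\delta_\gamma}{2})$, while every $\ge 1$‑jump contribution to $h_t(1)$ and to $h_t(0)$ is $O(1/\gamma)$ (optional stopping on the exponential martingales $r\mapsto\exp(\gamma\int_\cdot^\cdot\cdots)$; after the change $w=\gamma v$ these become $\tfrac1\gamma\int_0^{C\log\gamma}e^{\widehat B_w-w/2}\,dw$ with $\widehat B$ Brownian). Hence on $\{\xb\equiv0\text{ on }[0,H]\}$, where $\yb^\gamma_t-\yb^\gamma_s=\tfrac1{\sqrt\gamma}(B_t-B_s)$,
\[
 \operatorname{logit}\pi^{\delta_\gamma,\gamma}_t
 = \operatorname{logit}\pi^\gamma_{t-\delta_\gamma} + G_t - \tfrac{\gamma\delta_\gamma}{2} + r_\gamma(t),
 \qquad G_t:=\sqrt\gamma\,(B_t-B_{t-\delta_\gamma}),
\]
with $r_\gamma(t)$ negligible on every event on which the left‑hand side is not already $\to-\infty$, and with $G_t$ \emph{independent} of $\operatorname{logit}\pi^\gamma_{t-\delta_\gamma}$ (increments of $B$ after $t-\delta_\gamma$ are independent of $\Fc^\yb_{t-\delta_\gamma}$). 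The symmetric statement holds on $\{\xb\equiv 1\}$, and the $O(1)$ many $O(\delta_\gamma)$‑windows around jumps of $\xb$ are handled separately (and shown to neither create nor destroy spikes in the limit).

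\textbf{The transition at $C=2$.} By the last display, on $\{\xb\equiv0\}$ the event ``$\pi^{\delta_\gamma,\gamma}_t$ spikes above logit‑height $m$'' is, up to negligible terms, $\{A+G_t\ge m+\tfrac C2\log\gamma\}$, with $A:=\operatorname{logit}\pi^\gamma_{t-\delta_\gamma}$ and $G_t\sim\Nc(0,\gamma\delta_\gamma)=\Nc(0,C\log\gamma)$ \emph{independent}. From the fine description of the forward filter in \cite{bernardin2018spiking} (or a short direct excursion‑theory argument: on the fast scale $\gamma t$ the logit of $\pi^\gamma$ is a reflected Brownian motion with constant drift $-\tfrac12$ whose soft barrier sits near $-\log\gamma$) one extracts the one‑point tail $\P(A\ge a)\approx\frac{2\lambda p}{\gamma}e^{-a}$ for $a\in[-\log\gamma+O(1),\,o(\log\gamma)]$, with $\P(A<-\log\gamma-O(1))$ super‑polynomially small. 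Conditioning on $G_t=g$, splitting the integral at $g=(\tfrac C2+1)\log\gamma$, and using $\E e^{G_t}=\gamma^{C/2}$, a Laplace computation gives
\[
 \P\!\big(\operatorname{logit}\pi^{\delta_\gamma,\gamma}_t\ge m\big)
 \approx \frac{2\lambda p}{\gamma}e^{-m}\,\P\!\big(\Nc(C\log\gamma,\,C\log\gamma)\le(\tfrac C2+1)\log\gamma\big)
 + \P\!\big(G_t>(\tfrac C2+1)\log\gamma\big).
\]
If $C<2$ the first probability tends to $1$ and the second summand is $\gamma^{-(C+2)^2/(8C)}=o(1/\gamma)$, so $\P(\operatorname{logit}\pi^{\delta_\gamma,\gamma}_t\ge m)\approx\frac{2\lambda p}{\gamma}e^{-m}$ — \emph{exactly} the one‑point spike intensity of the forward filter, the same constant $2\lambda p$ reappearing because the exponential moment $\gamma^{C/2}$ of $G_t$ cancels the threshold shift $e^{-\gamma\delta_\gamma/2}=\gamma^{-C/2}$. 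If $C>2$ both summands are $o(1/\gamma)$ (indeed $O(\gamma^{-(C+2)^2/(8C)}+\gamma^{-2})$, both exponents $>1$ by AM--GM), so the one‑point probability of any fixed‑height spike of $\pi^{\delta_\gamma,\gamma}$ is $o(1/\gamma)$; multiplying by the $O(\gamma)$ spike rate of the fast scale, the expected number of spikes of height $>\tfrac12$ in $[0,H]$ tends to $0$.

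\textbf{Upgrading to the graphs; the main obstacle.} For $C>2$, the last estimate plus a union bound gives $\P(\exists\,t:\pi^{\delta_\gamma,\gamma}_t>\tfrac12$ away from the shrinking windows around jumps of $\xb)\to0$; combined with the complementary estimate that off spikes (and off those windows) $\operatorname{logit}\pi^{\delta_\gamma,\gamma}_t\to-\infty$ on $\{\xb=0\}$‑intervals (mirror statement on $\{\xb=1\}$‑intervals) and with the $\L^0$ convergence above, this yields $\pi^{\delta_\gamma,\gamma}\to\xb$ in the Hausdorff topology on graphs; since $\xb$ is $\{0,1\}$‑valued and càdlàg this is equivalent to $M_2$‑convergence. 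For $C<2$ one must promote the one‑point intensity match to convergence in law of the whole point process of (time, height) of spikes of $\pi^{\delta_\gamma,\gamma}$ to the inhomogeneous Poisson process governing $\X$: a factorial‑moment/Chen--Stein argument for the Poisson limit together with the intensity identification, plus a tightness/no‑loss‑of‑mass argument in the Hausdorff topology, along the scheme of \cite{bernardin2018spiking}. I expect this last step to be the principal difficulty — in particular, controlling the extra independent Gaussian field $G_\cdot$, whose correlation length $\delta_\gamma$ is much larger than the spike scale $1/\gamma$ (so one must rule out spurious clustering of spikes), together with the uniform‑in‑$t$ control of the remainder $r_\gamma(\cdot)$ and of the $O(\delta_\gamma)$‑windows around jumps of $\xb$. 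The threshold $C=2$ itself is, by contrast, pinned down transparently by the competition in the Laplace computation between $\E e^{G_t}=\gamma^{C/2}$ and the feasibility constraint that the exponentially tilted $G_t$ (of mean $C\log\gamma$) fit below $(\tfrac C2+1)\log\gamma$, which flips precisely at $C=2$.
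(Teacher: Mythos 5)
Your $\L^0$ argument is correct and is essentially the paper's: the proof of the first item of Theorem \ref{thm:aap} is rerun with the comparison statistic shifted to the window $[t-\delta_\gamma-\varepsilon,\,t-\delta_\gamma]$, exactly as you do. For the strong-topology part you take a genuinely different decomposition: the Bayes/two-filter smoothing identity, expressing $\operatorname{logit}\pi^{\delta_\gamma,\gamma}_t$ through $\operatorname{logit}\pi^\gamma_{t-\delta_\gamma}$ plus a likelihood functional of the fresh observation increment, with the independent Gaussian $G_t$ made explicit. The paper instead uses the backward smoothing ODE of Liptser--Shiryaev (Proposition \ref{proposition:smoothing_expressions}) to write $\pi^{\delta_\gamma,\gamma}_t$ through the \emph{right} endpoint $\pi^\gamma_t$ and the damping $D^\gamma_t=\int_{t-\delta_\gamma}^t a(\pi^\gamma_u)\,du$, reduces the whole theorem (Propositions \ref{proposition:reduction_to_damping} and \ref{proposition:more_reduction_to_damping}) to showing $D^\gamma\to 0$ or $\infty$ uniformly over the spike excursions $[T_j,S_j]$ under the coupling where $\pi^\gamma\to\X$ almost surely, and then controls $D^\gamma$ pathwise via logit coordinates, the path transforms of Lemma \ref{lemma:path_transforms}, the residual bound of Lemma \ref{lemma:residual_control} and L\'evy's modulus. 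Your Laplace competition ($\E e^{G_t}=\gamma^{C/2}$ against the feasibility ceiling $(\tfrac C2+1)\log\gamma$) does locate $C=2$ for the same underlying reason as the paper's exponent $-(1-\sqrt w)^2$, so the heuristic is sound.

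However, as written the strong-topology part has genuine gaps, and they sit exactly where the work is. First, the theorem asserts convergence of \emph{graphs} in the Hausdorff topology, while your computation is a fixed-$t$ one-point intensity estimate; the upgrade — Poisson convergence of the full spike point process with the correct height law $dm/m^2$ when $C<2$, and a uniform-in-$t$ suppression statement when $C>2$ (a bound "$o(1/\gamma)$ per point times an $O(\gamma)$ spike rate" over a continuum of times is not a proof without a uniform modulus or excursion decomposition) — is precisely what you defer as "the principal difficulty" and what Sections \ref{section:preparatory}--\ref{section:proof} of the paper supply. Second, the claim that every $\geq 1$-jump contribution to $h_t(0),h_t(1)$ is $O(1/\gamma)$ holds only at fixed $t$: these are exponential functionals $\tfrac1\gamma\int_0^{C\log\gamma}e^{\widehat B_w-w/2}\,dw$ with heavy tails, and uniformly over $t\in[0,H]$ they reach size $\gamma^{o(1)}$ at exceptional Brownian stretches — which are exactly the (candidate) spike times under scrutiny — so "$r_\gamma(t)$ negligible on the relevant events" cannot simply be asserted; controlling such terms uniformly is what the paper's separation argument, Lemma \ref{lemma:residual_control} and the L\'evy-modulus estimates are for. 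Third, the input tail $\P(\operatorname{logit}\pi^\gamma_t\ge a)\approx\frac{2\lambda p}{\gamma}e^{-a}$, uniformly in the required range of $a$ and $t$ and conditionally on the $\xb$-path, is not quotable from the prior paper's theorem (which gives graph convergence to $\X$, not fixed-time marginal asymptotics) and would itself need proof. So the threshold identification is correct in spirit, but neither bullet point of the theorem is actually proved by the proposal.
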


\bigskip

\begin{proof}[Sketch of proof]
\label{sketch_of_proof}
The proof given in Theorem \ref{thm:aap} carries verbatim to proving \eqref{eq:cv_lesbesgue_of_smoothing}. We will not repeat it.

For the rest of the paper, since we only need to establish convergences in law, for the Hausdorff topology, it is more convenient to prove almost sure convergence for any coupling of the Wiener process $B = B^\gamma$ in Eq. \eqref{eq:observation}. Equivalently, we can choose a coupling of $W = W^\gamma$, which we take as the Dambis-Dubins-Schwarz coupling of \cite{bernardin2018spiking}. In that setting, we know that $\lim_{\gamma \rightarrow \infty} \pi^\gamma = \X$ almost surely, for the Hausdorff topology.
 
In Section \ref{section:smoothing}, we give in Proposition \ref{proposition:smoothing_expressions} a derivation of $\pi^{\delta_\gamma, \gamma}$ in terms of the process $\pi^{0,\gamma} =\pi^\gamma = \left( \pi^\gamma_t \ ; \ t \geq 0 \right)$. This will allow for an informal discussion explaining the phenomenon via a certain damping factor which is denoted $D_t^\gamma:=\int_{t-\delta_\gamma}^t a_s^\gamma \, ds$ in the sequel.

Before the core of the proof, we do some preparatory work in Section \ref{section:preparatory}, where we prove that only the damping term needs to be analysed. 

The core of the proof is in Section \ref{section:proof}. We start with a trajectorial decomposition of the process $\pi^\gamma$. The proof of the first statement of Theorem \ref{thm:main} is in Subsection \ref{subsection:fast_feedback}, while the proof of the second statement is in Subsection \ref{subsection:slow_feedback}.
\end{proof}

\subsection{Further remarks}
\label{subsection:further_remarks}

\bigskip

{\bf On the transition:} Without much change in the proof, one can consider $C = C_\gamma$ depending on $\gamma$. In that setting, the fast feed-back regime and the slow feed-back regime correspond respectively to 
$$ \limsup_{\gamma \rightarrow \infty} C_\gamma < 2
   \textrm{ and }
   \liminf_{\gamma \rightarrow \infty} C_\gamma > 8 \ .
$$
Furthermore, one could ask the question if there exists a threshold point $C$. See Section \ref{subsection:rmk:Cge2} for a discussion on this point. As discussed there we strongly believe that the transition is sharp i.e. the fast feed-back regime and the slow feed-back regime correspond respectively to 
$$ \limsup_{\gamma \rightarrow \infty} C_\gamma < 2
   \textrm{ and }
   \liminf_{\gamma \rightarrow \infty} C_\gamma > 2 \ .
$$
We can also ask what happens at exactly the transition $C=2$ and if there is possible zooming around the constant $C_\gamma = 2$. This matter is beyond the scope of the paper.

\bigskip

{\bf Away from the transition:} Because of the monotonicity of the damping, as a positive integral, one can easily deduce what is happening if $C_\gamma$ remains away from the threshold interval constant $[2,8]$. 

\bigskip

{\bf Is the convergence to the spike process only in law as $\gamma \rightarrow \infty$? Not in probability or almost surely?}
This point is rather subtle and we mainly choose to sweep it under the rug. Nevertheless, let us make the following comment. In the context of filtering, the spikes correspond to exceptionally fast points of the Brownian motion appearing in the noise $B=B^\gamma$. Let us assume that for some (unphysical) reason, $B^\gamma$ remains the same,  i.e. one can perfectly tune the strength of the noise at will. For different $\gamma$, the spikes appear as functionals of the Brownian motion $B$ at {\it different scales}. Therefore, we argue that there is no hope for obtaining a natural trajectorial limit to the spike process as $\gamma \rightarrow \infty$.

\bigskip

{\bf On the general Wonham-Shiryaev filter:} It is a natural question to generalise our main theorem to the Wonham-Shiryaev filter with $n$ states from Eq. \eqref{eq:filtering}. However, the mathematical technology dealing with the spiking phenomenon in a multi-dimensional setting is an open problem still under investigation.

\bigskip

{\bf Notations:} The notation $o_\gamma^\zeta (g_\gamma )$ denotes a $\gamma$ deterministic (resp. random) quantity    negligible (resp. almost surely negligible) with respect to the $\gamma$ dependent deterministic (resp. random) function $g_\gamma$,  as $\gamma$ goes to infinity, when the extra variable $\zeta$ is fixed.  Moreover,  when we are considering random quantities, and to consider convergence in probability, we denote $X_\gamma^\zeta:=o_{\gamma, \mathbb P}^\zeta\  (g_\gamma )$ a random variable such that $g_\gamma^{-1} X_\gamma^\zeta$ goes to zero  in probability as $\gamma$ goes to infinity, when the extra variable $\zeta$ is fixed. Similar usual notations are used with $o$ replaced by $\mathcal O$.  Finally  we use the notation $A \lesssim_\zeta B$ (resp. $A \gtrsim_\zeta B$) to say that there exists a finite constant $C(\zeta)>0$ (depending a priori on $\zeta$) such that $A \le C(\zeta) \ B$ (resp. $A \ge C(\zeta) B$). When the dependence in $\zeta$ is obvious, we omit the subscript $\zeta$.

When the dependence on $\zeta$ is universal (i.e. does not depend on the  parameter $\zeta$) we omit the dependence on $\zeta$ in the notation defined above.

Given a process $(\zeta_t \; ; \; t \ge 0)$ and two times $s \le t$ we denote $\zeta_{s,t} = \zeta_t - \zeta_s$ the increment of $\zeta$ between $s$ and $t$.

\section{Smoothing transform}
\label{section:smoothing}

We shall express the equation satisfied by \eqref{eq:def_process} in the $2$-states context of Section \ref{subsec:2states}. The general theory is given in \cite[Chapter 9]{LS01}. For $s\leq t$ we write:

\begin{equation}
\label{eq:pist}
\pi_{s,t}^{\gamma} := \pi_{s,t}^\gamma (1) = \ \P\left( \xb_{s} = 1 \ | \ \left({\bf y}^\gamma_s\right)_{s\le t} \right) \ .
\end{equation}

\begin{proposition}
For any $0\le s \le t$ we have that
\label{proposition:smoothing_expressions}
\begin{align}
\label{eq:single_expression}
    \pi_{s,t}^\gamma
= & \ \xb_t
    + (\pi^\gamma_{t} - \xb_t) e^{ -\int_s^t a^\gamma_u \, du }\\
  & \
    + \mathds{1}_{\{\xb_t=0\}} \int_s^t \lambda_{1,0} \frac{\pi_u^\gamma}{1-\pi^\gamma_{u}} e^{-\int_s^u a^\gamma_v \, dv} \, du 
    - \mathds{1}_{\{\xb_t=1\}} \int_s^t \lambda_{0,1} \frac{1-\pi^\gamma_{u}}{\pi_u^\gamma} e^{-\int_s^u a^\gamma_v \, dv} \, du \ ,
    \nonumber
\end{align}
where the instantaneous damping term is given by
\begin{equation}
\label{eq:a_u}
  a^\gamma_u
   :=
   a( \pi^\gamma_u )
   =
   \lambda_{1,0} \, \frac{\pi^\gamma_u}{1-\pi^\gamma_{u}}
+  \lambda_{0,1} \,  \frac{1-\pi^\gamma_u}{\pi^\gamma_{u}} \ .
\end{equation}
\end{proposition}

\begin{proof}
To simplify notation, during the proof, we forget the dependence in $\gamma$ and denote, for all $\alpha \in \{0, 1\}$,
$$ \Pi_{s,t} (\alpha) :=  \ \P\left( \xb_{s} = \alpha \ | \ \left({\bf y}^\gamma_s\right)_{s\le t} \right)\ , \quad  \Pi_{t} (\alpha) :=  \ \P\left( \xb_{t} = \alpha \ | \ \left({\bf y}^\gamma_s\right)_{s\le t} \right)\ .$$
Thanks to \cite[Theorem 9.5]{LS01}, we have:
\begin{align*}
    \partial_s \Pi_{s, t}
& = - \Pi_s \Lc\left[ \tfrac{\Pi_{s, t}}{\Pi_{s}} \right]
    + \tfrac{\Pi_{s,t}}{\Pi_{s}} 
      \Lc^{*}\left[ \Pi_{s} \right] \ , 
\end{align*}
which we will specialize to the point $\alpha=1$. Note that:
\begin{align*}
    \Pi_{s, t}(1)
& = \pi_{s,t} \ ,
\end{align*}
\begin{align*}
    \Lc\left[ \tfrac{\Pi_{s, t}}{\Pi_{s}} \right](1)
& = \lambda_{1,0} \tfrac{\Pi_{s, t}(0)}{\Pi_{s}(0)} - \lambda_{1,0} \tfrac{\Pi_{s, t}(1)}{\Pi_{s}(1)}  \ = \  \lambda_{1,0} \left( \tfrac{1-\pi_{s, t}}{1-\pi_{s}}                    - \tfrac{\pi_{s, t}}{\pi_{s}}   \right) \ ,
\end{align*}
\begin{align*}
    \Lc^*\left[ \Pi_{s} \right](1)
& = \lambda_{0,1} \Pi_{s}(0) - \lambda_{1,0} \Pi_{s}(1) \ = \  \lambda_{0,1} \left( 1-\pi_{s} \right)
   -\lambda_{1,0} \pi_{s} \ .
\end{align*}
Resuming the computation:
\begin{align*}
    \partial_s \pi_{s, t}
& = - \pi_s \lambda_{1,0} \left( \frac{1-\pi_{s, t}}{1-\pi_{s}}
                       - \frac{\pi_{s, t}}{\pi_{s}} \right)
    + \frac{\pi_{s,t}}{\pi_{s}} 
      \left( \lambda_{0,1} \left( 1-\pi_{s} \right) - \lambda_{1,0} \pi_{s} \right) \\
& = - \lambda_{1,0} \frac{\pi_s}{1-\pi_{s}}
    - \pi_s \lambda_{1,0} \left( \frac{-\pi_{s, t}}{1-\pi_{s}}
                       - \frac{\pi_{s, t}}{\pi_{s}} \right)
    + \frac{\pi_{s,t}}{\pi_{s}} 
      \left( \lambda_{0,1} \left( 1-\pi_{s} \right) - \lambda_{1,0} \pi_{s} \right) \\
& = - \lambda_{1,0} \frac{\pi_s}{1-\pi_{s}}
    + \pi_{s, t} \lambda_{1,0} \frac{1}{1-\pi_{s}}
    + \pi_{s,t}
      \left( \lambda_{0,1} \left( \frac{1}{\pi_{s}} - 1\right) - \lambda_{1,0} \right) \\
& = - \lambda_{1,0} \frac{\pi_s}{1-\pi_{s}}
    + \pi_{s, t} 
      \left[ \lambda_{1,0} \frac{\pi_s}{1-\pi_{s}}
           + 
             \lambda_{0,1} \frac{1-\pi_s}{\pi_{s}} \right] \\
& = - \lambda_{1,0} \frac{\pi_s}{1-\pi_{s}}
    + \pi_{s, t} a_s \ .
\end{align*}
One recognizes an ordinary differential equation in the variable $s$, with $s \leq t$. Upon solving, we have:
\begin{equation}
 \pi_{s,t} = \pi_{t} \ e^{ -\int_s^t a_u du }
             + \int_s^t \lambda_{1,0} \frac{\pi_u}{1-\pi_{u}} e^{-\int_s^u a_v \, dv} \, du \ .
\end{equation}
This is exactly the result when $\xb_t =0$.  

\medskip

Recall Eq. \eqref{eq:a_u}. The exact derivative:
\begin{equation*}
\label{eq:exact_derivative}
 \int_s^t \lambda_{0,1} \frac{1-\pi_{u}}{\pi_u} e^{-\int_s^u a_v dv} du
   + \int_s^t \lambda_{1,0} \frac{\pi_u}{1-\pi_{u}} e^{-\int_s^u a_v dv} du
   = \int_s^t a_u e^{-\int_s^u a_v dv} du 
   = 
   1 - e^{ -\int_s^t a_u du } 
\end{equation*}
gives the dual expression when $\xb_t =1$, and then Eq. \eqref{eq:single_expression}.

\end{proof}

\section{Reduction to the control of the damping term}
\label{section:preparatory}

\subsection{Informal discussion}

Recall that in the context of Proposition \ref{proposition:smoothing_expressions} we are interested in the case where $s=t- \delta_\gamma$ with 
\begin{equation*}
 \delta_\gamma = C\  \frac{\log \gamma}{\gamma} \ .
\end{equation*}
For $t \in [\delta_\gamma, H]$ we define thus the damping term associated to the instantaneous damping term defined by Eq. \eqref{eq:a_u}
\begin{equation}
\label{eq:dampingterm}
D_t^\gamma = \int_{t-\delta_\gamma}^t a_u^\gamma \ du =  \int_{t-\delta_\gamma}^t \left\{ \lambda_{1,0} \,  \tfrac{\pi^\gamma_u}{1-\pi^\gamma_{u}}
+  \lambda_{0,1}\,  \tfrac{1-\pi^\gamma_u}{\pi^\gamma_{u}} \right\} \ du  \ > \ 0 \ , 
\end{equation}
and the process (recall Eq. \eqref{eq:pist}) 
\begin{equation*}
\pi^{\delta_\gamma,\gamma}_t =\pi^\gamma_{t-\delta_\gamma, t} \ .
\end{equation*}

Assume there is no jumping times for $\xb$ in the time interval $[t-\delta_\gamma,t]$. Spikes, by definition, are of size strictly smaller than one. If $\pi_t^\gamma$ is collapsing on $0$, i.e. is close to zero, then $\pi_u^\gamma \approx 0$ for $u \in [t-\delta_\gamma, t]$, hence:
$$ \int_{t-\delta_\gamma}^t \lambda_{1,0}\  \tfrac{\pi^\gamma_u}{1-\pi^\gamma_{u}} \ e^{-\int_{t-\delta_\gamma}^u a^\gamma_v \, dv} \ du = o_\gamma (1)$$
and reciprocally when the collapse is on $1$. From the previous proposition, we thus have:
\begin{align*}
    \pi_{t}^{\delta_\gamma,\gamma}
= & \xb_t
    + (\pi^\gamma_{t} - \xb_t) \, e^{ - D_t^\gamma } \ + \  o_\gamma(1)
\end{align*}
Assuming that the damping term $D^\gamma$ converges to some limiting process $D$ in the large $\gamma$ limit we expect that
\begin{equation}
 \lim_{\gamma \rightarrow \infty} \pi_{t}^{\delta_\gamma, \gamma}
 = \xb_t
    + (\X_{t} - \xb_t) \, e^{ -D_t } \ .
\end{equation}
Above, the limiting graph is defined by its slice at time $t$, which is given by $\X_t$ translated by $-\xb_t$ and then rescaled by a factor $e^{ -D_t }$, and then translated by $\xb_t$ again.

\bigskip

Informally, there are three cases:
\begin{itemize}
\item Slow feedback: $D=\infty$ and therefore 
$$ \lim_{\gamma \rightarrow \infty} \pi_{t}^{\delta_\gamma, \gamma}
 = \xb_t \ .$$
\item Transitory regime: $D$ is non-trivial and therefore 
$$ \lim_{\gamma \rightarrow \infty} \pi_{t}^{\delta_\gamma, \gamma}
 = \xb_t
    + (\X_{t} - \xb_t) e^{ -D_t } \ ,$$
 with $D$ having a statistic which needs to be analyzed. This analysis is beyond the scope of this paper as mentioned in Subsection~\ref{subsection:further_remarks}.\\
\item Fast feedback: $D=0$ and therefore 
$$ \lim_{\gamma \rightarrow \infty} \pi_{t}^{\delta_\gamma, \gamma}
 = \X_{t} \ .$$
\end{itemize}

\bigskip

In this section we prove a useful intermediary step following the previous discussion, which informally says that:
\begin{align}
\label{eq:informal}
\pi_{t}^{\delta_\gamma,\gamma} \approx \xb_t + \left( \pi^\gamma_t - \xb_t \right) e^{-D_t^\gamma}. 
\end{align}

This is the combination of two simplifying facts:
\begin{itemize}
\item During jumps, Hausdorff proximity is guaranteed. Indeed, the graph of the spike process $\mathbb X$ and the graph of $\xb$ are very close in the Hausdorff sense since their slices are exactly $[0,1]$ at the jump times of $\xb$. Thus no matter where $\pi_{s,t}$ is, the Hausdorff distance will be small.
\item If $|t-s| \rightarrow 0$, away from jumps, the remainder benefits from smoothing.
\end{itemize}

Once this is established, we only need to control the damping term $D^\gamma$ outside from small spikes.

Let us now make these informal statements rigorous.

\subsection{Formal statement}
\label{subsec:FormalStatement}
Let us start with a few notations and conventions. If $f: t \in I=[a,b] \mapsto f_t \in [0,1]$, $0\le a\le b \le H$,  is a function defined on a closed subset $I$ of $[0,H]$, we denote by $\Gc (f)$ its graph:
\begin{itemize}
\item If $f$ is a continuous function continuous then we define its graph by 
\begin{equation*}
\Gc (f)=\{(t, f_t)\; ; \; t\in I\} \cup \{(t, f(a))\; ; \; 0\le t \le a\} \cup  \{(t, f(b))\; ; \; b\le t \le H\} \ . 
\end{equation*}
\item If $f$ is a  c\`adl\`ag function then, by denoting by ${\mathfrak d}_f$ the set of discontinuous points of $f$, we define its graph by
\begin{equation*}
\begin{split}
\Gc (f) &=\{(t, f_t)\; ; \; t\in I\backslash {\mathfrak d}_f \} \cup \{(t, f(a))\; ; \; 0\le t \le a\} \cup  \{(t, f(b))\; ; \; b\le t \le H\} \\
&\cup \bigcup_{ t \in {\mathfrak d}_f} (\{t\} \times [0,1]) \ .
\end{split}
\end{equation*}
\end{itemize}

Even if this definition appears to be awkward at first sight, we shall only use it for the Markov jump process $(\xb_t\; ; \; t \ge 0)$ which stands to connect the graph at jumping times
%
%
%
%

We recall that the slice at time $t$ of a graph $\mathcal G$ is denoted by $\mathcal G_t$. In order to simplify the notations, we write 
\begin{equation}
\label{eq:ggammacirct-1}
\Gc^\gamma = \Gc( \pi^{\delta_\gamma, \gamma} )
\end{equation}
for the graph induced by the process of interest (which has continuous trajectories). We also denote $\Gc^\infty := \Gc^{C,\infty}$ the candidate for the limiting graph, either the completed graphs $\X$ (if $C<2$) or $\Gc(\xb)$ (if $C>8$). By the convention above, in the definition of $\Gc(\xb)$, the graph induced by the process $\xb$, we add a vertical bar when there is a jump.
We define also $\Gc^{\gamma,\circ}$ the graph whose slice at time $t\in [\delta_\gamma, H]$ is given by 
\begin{equation}
\label{eq:ggammacirct}
\Gc^{\gamma,\circ}_t \ := \ {\mathcal G}_t (\xb) + (\pi^\gamma_{t} - \xb_t) \ e^{ -D_t^\gamma } 
\end{equation}
and the set $\mathcal G_{\delta_\gamma}^{\gamma, \circ}$ if $t\in [0, \delta_\gamma]$. Observe in particular that $\mathcal G^{\gamma,\circ}$ contains the vertical bar $[0,1]$ when there is a jump of $\xb$. 


The following formalises the informal statement of Eq. \eqref{eq:informal}:
\begin{proposition}
\label{proposition:reduction_to_damping}
Consider a coupling such that almost surely 
\begin{align}
&\lim_{\gamma \to \infty} {\rm d}_\L ( \pi^\gamma, \xb ) = 0 \ ,\\
&\lim_{\gamma \to \infty} {\rm d}_\H ({\mathcal G} (\pi^\gamma), \mathbb X) = 0 \ .
\end{align}
Then, almost surely:
\begin{align}
\label{eq:smoothing_cv_L}
&\lim_{\gamma \rightarrow \infty} {\rm d}_\L( \pi^{\delta_\gamma, \gamma}, \xb )  = 0 \ , \\
\label{eq:damping_cv}
&\lim_{\gamma \rightarrow \infty} {\rm d}_\H(\Gc^\gamma, \Gc^{\gamma,\circ} )   = 0 \ .
\end{align}
\end{proposition}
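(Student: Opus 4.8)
The plan is to work from the exact smoothing identity \eqref{eq:single_expression} at $s=t-\delta_\gamma$, namely $\pi^{\delta_\gamma,\gamma}_t=\xb_t+(\pi^\gamma_t-\xb_t)e^{-D^\gamma_t}+R^\gamma_t$, where $R^\gamma_t$ is the signed remainder integral of \eqref{eq:single_expression}: it is nonnegative if $\xb_t=0$, nonpositive if $\xb_t=1$, and by Remark \ref{rmk:exact_derivative} it satisfies $|R^\gamma_t|\le 1-e^{-D^\gamma_t}\le 1$. The point is that the first two terms form precisely the single element of the slice $\Gc^{\gamma,\circ}_t$ whenever $t$ is not a jump time of $\xb$. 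So both conclusions reduce to: (i) the remainder $R^\gamma_t$ is negligible off a vanishing set of times, and (ii) near the (a.s.\ finitely many) jumps of $\xb$, where $R^\gamma_t$ can be of order $1$, both graphs carry a near-vertical transition on a $\delta_\gamma$-thin time strip so that Hausdorff proximity is automatic. Fix once and for all an $\omega$ for which $\xb$ has finitely many jumps on $[0,H]$ and both almost-sure hypotheses hold, and work on $[\delta_\gamma,H]$, the slice $[0,\delta_\gamma)$ contributing $O(\delta_\gamma)$.

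For \eqref{eq:smoothing_cv_L}, split $[\delta_\gamma,H]=J_\gamma\sqcup J_\gamma^c$ with $J_\gamma=\{t:\xb$ jumps on $[t-\delta_\gamma,t]\}$; then $|J_\gamma|\to0$ and $|\pi^{\delta_\gamma,\gamma}_t-\xb_t|\le1$ there. On $J_\gamma^c$ with $\xb\equiv0$ on $[t-\delta_\gamma,t]$ (the case $\xb\equiv1$ being symmetric), the main term is $\pi^\gamma_t e^{-D^\gamma_t}\le\pi^\gamma_t$, with $\int_0^H\pi^\gamma_t\mathds{1}_{\{\xb_t=0\}}\,dt=\int_0^H|\pi^\gamma_t-\xb_t|\mathds{1}_{\{\xb_t=0\}}\,dt\le{\rm d}_\L(\pi^\gamma,\xb)\to0$. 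For the remainder, fix $\varepsilon,\eta>0$: since $\X$ has a.s.\ finitely many spikes of height $\ge\varepsilon$ inside the $\{\xb=0\}$-region, the Hausdorff hypothesis forces, for $\gamma$ large, $\{u:\pi^\gamma_u\ge\varepsilon\}$ (within that region) into boundedly many intervals of length $\le\eta$, so the set of $t$ whose window meets $\{\pi^\gamma\ge\varepsilon\}$ has measure $\le c_\omega(\eta+\delta_\gamma)$; on the complement, $\pi^\gamma_u<\varepsilon$ throughout $[t-\delta_\gamma,t]$, hence $|R^\gamma_t|\le\lambda_{1,0}\tfrac{\varepsilon}{1-\varepsilon}\delta_\gamma\to0$, while on the bad set $|R^\gamma_t|\le1$. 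Sending $\gamma\to\infty$, then $\eta\to0$, then $\varepsilon\to0$ gives $\int_{\delta_\gamma}^H|R^\gamma_t|\,dt\to0$, and \eqref{eq:smoothing_cv_L} follows.

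For \eqref{eq:damping_cv}, bound the two Hausdorff inclusions separately, using the trichotomy: good times, spike-window times (time-measure $O(\eta)$), and jump-window times (within $\delta_\gamma$ of the finite jump set). At a good time, $(t,\pi^{\delta_\gamma,\gamma}_t)$ and $(t,\xb_t+(\pi^\gamma_t-\xb_t)e^{-D^\gamma_t})\in\Gc^{\gamma,\circ}$ differ by $|R^\gamma_t|\to0$, settling both inclusions there. At a spike-window time, the window contains a near-spike of $\pi^\gamma$ and $\X$ a matching spike; $\pi^{\delta_\gamma,\gamma}_t$ is (up to $R^\gamma_t$) the image of $\pi^\gamma$ on that window under the affine map $x\mapsto\xb_t+(x-\xb_t)e^{-D^\gamma_t}$, i.e.\ a point of $\Gc^{\gamma,\circ}$, and conversely; these times are absorbed by enlarging neighbourhoods and letting $\eta\to0$. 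Near a jump time $\tau$: on the strip $\{t:\tau\in[t-\delta_\gamma,t]\}$ (length $O(\delta_\gamma)$), the continuous process $\pi^{\delta_\gamma,\gamma}$ interpolates between values close to $\pi^\gamma$ at times near $\tau$, so $\Gc^\gamma$ there is, up to $o(1)$, a vertical segment over $\{\tau\}$; on the other side $\Gc^{\gamma,\circ}$ contains there the full bar $\Gc_\tau(\xb)=\{\tau\}\times[0,1]$ translated by the bounded amount $(\pi^\gamma_\tau-\xb_\tau)e^{-D^\gamma_\tau}$, plus the neighbouring non-jump slices $\{\xb_{t'}+(\pi^\gamma_{t'}-\xb_{t'})e^{-D^\gamma_{t'}}\}$, which sweep continuously between the two values of $\xb$; matching these near-vertical structures gives proximity $\delta_\gamma+o(1)$ on the strip.

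The main obstacle is precisely this last point: near the jumps of $\xb$ (and near tall spikes) $R^\gamma_t$ is not small and one cannot argue slice by slice, so the proof must use $\delta_\gamma\to0$ to trap everything inside an arbitrarily thin time-strip and then verify that on such a strip $\Gc^\gamma$ genuinely sweeps out, up to $o(1)$, the same (shifted) vertical segment that $\Gc^{\gamma,\circ}$ carries by construction — in particular controlling $\pi^{\delta_\gamma,\gamma}$ at the two ends of the transition window and checking that the translation $(\pi^\gamma_\tau-\xb_\tau)e^{-D^\gamma_\tau}$ is compatible with it. This is where the balance between the smoothing width $\delta_\gamma$ and the self-damping $e^{-\int a^\gamma}$ really enters.
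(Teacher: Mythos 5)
Your reduction to the identity \eqref{eq:single_expression} and your three-way classification of times (good times, spike windows, jump strips) is the same skeleton as the paper's proof, and your treatment of the $\L^0$ statement \eqref{eq:smoothing_cv_L} is sound. The genuine gap is in the Hausdorff statement \eqref{eq:damping_cv}, at the spike-window times away from jumps. There you propose to have these times ``absorbed by enlarging neighbourhoods and letting $\eta\to 0$'', i.e.\ to discard them because they occupy a time set of small measure. That works for ${\rm d}_\L$ but not for ${\rm d}_\H$: the Hausdorff distance is a sup-type quantity, and away from jumps the slices of $\Gc^{\gamma,\circ}$ are single points, not vertical bars, so a single time $t$ at which $R^\gamma_t$ is of order one would already force ${\rm d}_\H(\Gc^\gamma,\Gc^{\gamma,\circ})$ to be of order one, no matter how thin the bad time set is. What is actually needed --- and what the paper proves in its Step 1 --- is that $R^\gamma_t$ is \emph{uniformly} small on all of $K_\varepsilon$, including at times whose window crosses a tall spike. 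This is where the structure of the limit $\X$ enters in an essential way: almost surely the maximal spike height $M^*$ of \eqref{eq:biggestspike} is $<1$ (the Poisson intensity $dm/m^2$ integrates finitely near any level $1-\eta$), so on an event of probability tending to $1$ one has $M^*\le 1-2\eta$, and together with ${\rm d}_\H(\Gc(\pi^\gamma),\X)\to 0$ this bounds the denominators: for $\xb\equiv 0$ on the window, $1-\pi^\gamma_u\ge 2\eta-{\rm d}_\H(\Gc(\pi^\gamma),\X)$, whence $|R^\gamma_t|\lesssim \delta_\gamma/\eta\to 0$ uniformly on $K_\varepsilon$, spikes included. Your proposal never invokes the fact that spikes do not reach the opposite boundary, and without it the slice-by-slice comparison at spike windows (in both Hausdorff inclusions) is not justified.

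A secondary, smaller issue is the jump strips. Your claim that on $\{t:\tau\in[t-\delta_\gamma,t]\}$ the graph $\Gc^\gamma$ is ``up to $o(1)$ a vertical segment'' because $\pi^{\delta_\gamma,\gamma}$ ``interpolates between values close to $\pi^\gamma$ at times near $\tau$'' is asserted rather than proved: at times whose window contains the jump, the remainder is of order one, so closeness to $\pi^\gamma$ or to $\xb$ at a \emph{prescribed} time is not available. The paper closes this by first establishing \eqref{eq:smoothing_cv_L} and then using it to find, in every interval $[J_i-\varepsilon,J_i+\varepsilon]$, times at which $\pi^{\delta_\gamma,\gamma}$ is within $\eta_\gamma$ of $0$ and of $1$, and concluding by continuity and the Intermediate Value Theorem that $\Gc^\gamma\cap J_\varepsilon^\square$ is $\lesssim\varepsilon$-close to the whole box $J_\varepsilon^\square$ (while $\Gc^{\gamma,\circ}$ contains the vertical bars by construction). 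Your ingredients suffice for this, but the argument needs to be run through the just-proved $\L^0$ convergence rather than through pointwise proximity to $\pi^\gamma$ near the jump.
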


\begin{proof}
Let $J_1, J_2, \dots$ be the successive jump times of $\xb$ and let us denote $L=\sup \ \{i\ge 1\; ; \; J_i\le H\}<\infty$ the number of jumps in the time interval $[0,H]$. It is easy to prove that 
\begin{equation}
\label{eq:Omegadef}
\lim_{\eta \to 0}\mathbb P \left(  \Omega_\eta  \right) = 1 \quad \text{where the event $\Omega_\eta$ is defined by $\Omega_\eta = \left\{ \inf_{i\le L} \vert J_{i} -J_{i-1} \vert \ge 2 \eta \right\}$ } \ .
\end{equation}
We define then on $\Omega_\eta$, for $\varepsilon < \eta$, the compact sets:
\begin{equation*}
\begin{split}
&J_{\varepsilon} := \bigcup_{i=1}^L \ [J_i-\varepsilon, J_i + \varepsilon]\cap[0,H] \ ,
   \quad
   J_{\varepsilon}^\square := J_{\varepsilon} \times [0,1] \ , \\
&K_{\varepsilon} := [0,H]\backslash \mathring{J_\varepsilon} \ ,
   \quad
   K_{\varepsilon}^\square := K_{\varepsilon} \times [0,1] \ . 
\end{split}
\end{equation*}

By \cite[Theorem 1.12.15]{barnsley} we have that for any compact subsets $A,B,C,D$ of $[0,H]\times[0,1]$, 
\begin{equation}
\label{eq:hausdu}
{\rm d}_{\H} (A \cup B, C\cup D) \le \max \left\{ {\rm d}_\H (A,C), {\rm d}_\H (B,D) \right\}\le {\rm d}_\H (A,C)+{\rm d}_\H (B,D)\ .
\end{equation}
Since  $\Gc^\gamma =( \Gc^\gamma \cap J_{\varepsilon}^\square) \cup ( \Gc^\gamma \cap K_{\varepsilon}^\square)$ (and similarly for $\mathcal G^\gamma$ replaced by $\Gc^{\gamma,\circ}$) it follows that
\begin{align*}
{\rm d}_\H( \Gc^\gamma, \Gc^{\gamma,\circ} )
& \le {\rm d}_\H\left( 
   \Gc^\gamma \cap J_{\varepsilon}^\square, 
   \Gc^{\gamma,\circ} \cap J_{\varepsilon}^\square
   \right)
   +
   {\rm d}_\H\left( 
   \Gc^\gamma \cap K_{\varepsilon}^\square, 
   \Gc^{\gamma,\circ} \cap K_{\varepsilon}^\square
   \right) \ .
\end{align*}
Hence, to prove Eq. \eqref{eq:damping_cv} we only have to prove that,  a.s., on each event $\Omega_\eta$:
\begin{equation}
\label{eq:marteau2}
   \lim_{\varepsilon \rightarrow 0}
   \limsup_{\gamma \to \infty} 
   {\rm d}_\H\left( 
   \Gc^\gamma \cap K_{\varepsilon}^\square, 
   \Gc^{\gamma,\circ} \cap K_{\varepsilon}^\square
   \right)
   =
   0 \ ,
\end{equation}
and
\begin{equation}
\label{eq:marteau1}
   \lim_{\varepsilon \rightarrow 0}
   \limsup_{\gamma \to \infty} {\rm d}_\H\left( 
   \Gc^\gamma \cap J_{\varepsilon}^\square, 
   \Gc^{\gamma,\circ} \cap J_{\varepsilon}^\square
   \right)
   =
   0 \ .
\end{equation}
\bigskip

We divide the proof of the proposition in three steps: we first prove Eq. \eqref{eq:marteau2}, then Eq. \eqref{eq:smoothing_cv_L} and then Eq. \eqref{eq:marteau1}.

\bigskip
\noindent
{\bf Step 1: Hausdorff proximity away from the jump times: proof of Eq. \eqref{eq:marteau2}.}
\bigskip

{\underline{Step 1.1: Spikes are of size less than $1-\eta$ with high probability.}}	
\medskip

Let $M^*$ be the largest length of a spike:
\begin{equation}
\label{eq:biggestspike}
M^* := \max_{t \in [0,H]} \max_{y \in \X_t \backslash {\mathcal G}_t (\xb) } \left\{ \left| y \right|\mathds{1}_{\xb_t=0} , (1-\left| y\right|)\mathds{1}_{\xb_t=1} \right\} \ .
\end{equation}
From the explicit description of the law of $\X$, $M^*$ is the maximum decoration of a Poisson process on $[0,H] \times [0,1]$ with intensity 
$$
  \left(
  p \mathds{1}_{\{ \xb_t=0 \}}
  +
  (1-p) \mathds{1}_{\{ \xb_t=1 \}}
  \right)
  \lambda dt \otimes \frac{dm}{m^2}
  \ .
$$
Upon conditioning on the process $\xb$, and considering the definition of a Poisson process \cite[\textsection 2.1]{kingman1992poisson}, notice that the number of points falling inside $[0,H]\times (1-\eta,1)$ is a Poisson random variable with parameter
$$ 
   \int_{[0,H]\times (1-\eta,1)} 
                         \left(
  p \mathds{1}_{\{ \xb_t=0 \}}
  +
  (1-p) \mathds{1}_{\{ \xb_t=1 \}}
  \right)
  \lambda
  dt \otimes \frac{dm}{m^2} \ .
$$
As such the event $\{ M^*\le 1-\eta \}$ corresponds to having this Poisson random variable being zero, so that:
\begin{equation}
\label{eq:estimspik}
\begin{split}
    & \P\left( M^*\le 1-\eta \right) \\
  = & \ \E \exp\left( -\int_{[0,H]\times (1-\eta,1)} 
                         \left(
  p \mathds{1}_{\{ \xb_t=0 \}}
  +
  (1-p) \mathds{1}_{\{ \xb_t=1 \}}
  \right)
  \lambda
  dt \otimes \frac{dm}{m^2} \right)\\
  = & \ \E \exp\left( - \lambda \left( 1/(1-\eta) - 1 \right)
                         \int_0^H
                         \left(
  p \mathds{1}_{\{ \xb_t=0 \}}
  +
  (1-p) \mathds{1}_{\{ \xb_t=1 \}}
  \right)
  dt  \right)\\
   = &\  \E \exp\left( -\frac{\lambda \eta}{1-\eta}
	\int_0^H
  \left(
  p \mathds{1}_{\{ \xb_t=0 \}}
  +
  (1-p) \mathds{1}_{\{ \xb_t=1 \}}
  \right)
  dt   
   \right)\\
   \geq & \ 
   \exp\left( -\frac{\lambda \eta}{1-\eta} H \max(p, 1-p) 
   \right) \ .
   \end{split}
\end{equation}
As such, it is clear from the last inequality of Eq. \eqref{eq:estimspik} that 
\begin{equation}
\label{eq:Mstar}
\lim_{\eta \to 0} \mathbb P (M^* \leq 1-\eta) = 1 \ .
\end{equation}

\medskip

{\underline{Step 1.2: End of the proof of Eq. \eqref{eq:marteau2}.}} 
\medskip

We observe now, by definition of Hausdorff distance, that for any $t\in K_\varepsilon$ and $u \in [t-\delta_\gamma, t]$ there exists $s\in [0,H]$ and $x\in \mathbb X_s$ such that
\begin{equation*}
\| (s,x) - (u, \pi_u^\gamma) \|^2
=
\vert s-u\vert^2 + \vert x- \pi_u^\gamma \vert^2  
\leq 
{\rm d}^2_{\H} \left( \Gc (\pi^\gamma), \mathbb X \right) \ .
\end{equation*}
From the definition of $M^*$, it implies that 
\begin{equation}
\label{eq:inf}
\sup_{\substack{t \in K_\varepsilon\\ {\xb}_t=0}} \; \sup_{u\in [t-\delta_\gamma, t]} \pi_u^\gamma \le  M^* + {\rm d}_{\H} \left( \Gc (\pi^\gamma), \mathbb X \right) 
\end{equation}
and
\begin{equation}
\label{eq:sup}
\begin{split}
\inf_{\substack{t \in K_\varepsilon\\ {\xb}_t=1}} \; \inf_{u\in [t-\delta_\gamma, t]} \pi_u^\gamma  \ge  (1-M^*) - {\rm d}_{\H} \left( \Gc (\pi^\gamma), \mathbb X \right) \ .
\end{split}
\end{equation}
Recalling Eq. \eqref{eq:Omegadef} and Eq. \eqref{eq:Mstar}, let us then denote the event 
\begin{equation}
\label{eq:Omegaprime}
\Omega_{\eta}' := 
\Omega_\eta \cap 
\{ M^* \leq 1 - 2 \eta \} \ ,
\end{equation}
which satisfies $\liminf_{\eta \to 0} \mathbb P (\Omega_{\eta}')=1$, and on which we have
\begin{align}
\label{eq:inf-sup}
\min\left( M_0^\gamma, M_1^\gamma \right) \geq &  \ 2\eta - {\rm d}_{\H} \left( \Gc (\pi^\gamma), \mathbb X \right) 
\end{align} 
where
\begin{align}
M_0^\gamma := 
\inf_{\substack{t \in K_\varepsilon\\ {\xb}_t=0}} \; \inf_{u\in [t-\delta_\gamma, t]} \{1-\pi_u^\gamma\}
\quad  & \text{and} \quad 
M_1^\gamma :=
\inf_{\substack{t \in K_\varepsilon\\ {\xb}_t=1}} \; \inf_{u\in [t-\delta_\gamma, t]} \pi_u^\gamma \ .
\nonumber
\end{align}
Notice in particular that:
\begin{equation}
\label{eq:inf-sup2}
\limsup_{\gamma \rightarrow \infty} 
\left\{ \frac{1}{M_0^\gamma}
+
\frac{1}{M_1^\gamma} \right\}
\le \frac{1}{\eta} \ .
\end{equation}
Recall Eq. \eqref{eq:single_expression}. For $t \in [\delta_\gamma,H]\backslash J_\varepsilon$, on the event ${\Omega}_{\eta}'$ (defined by Eq. \eqref{eq:Omegaprime}), we have thanks to Eq. \eqref{eq:inf-sup}, that, for $\gamma$ sufficiently large,
\begin{align*}
  &   
  \left|  \pi_{t}^{\delta_\gamma, \gamma}
       - \left( \xb_t
    + (\pi^\gamma_{t} - \xb_t) \,  e^{ -D_t^\gamma } \right) 
  \right|\\
\leq &\  \mathds{1}_{\{\xb_t=0\}} \int_{t-\delta_\gamma}^t \lambda_{1,0} \frac{\pi_u^\gamma}{1-\pi^\gamma_{u}} \, du 
     + \mathds{1}_{\{\xb_t=1\}} \int_{t-\delta_\gamma}^t \lambda_{0,1} \frac{1-\pi^\gamma_{u}}{\pi_u^\gamma} \, du \\
\leq & \ \left( \frac{1}{M_0^\gamma} + \frac{1}{M_1^\gamma} \right) 
     \left(
     \mathds{1}_{\{\xb_t=0\}} \int_{t-\delta_\gamma}^t \pi_u^\gamma \, du 
     + \mathds{1}_{\{\xb_t=1\}} \int_{t-\delta_\gamma}^t (1-\pi^\gamma_{u}) \, du 
     \right)\\
\stackrel{(*)}{=}
     & \ \left( \frac{1}{M_0^\gamma} + \frac{1}{M_1^\gamma} \right) 
     \left(
     \mathds{1}_{\{\xb_t=0\}} \int_{t-\delta_\gamma}^t (\pi_u^\gamma - \xb_u) \, du 
     + \mathds{1}_{\{\xb_t=1\}} \int_{t-\delta_\gamma}^t (\xb_u-\pi^\gamma_{u}) \,  du 
     \right) \\
=    & \ \left( \frac{1}{M_0^\gamma} + \frac{1}{M_1^\gamma} \right) 
     \int_{t-\delta_\gamma}^t \left| \pi_u^\gamma - \xb_u \right| \, du \\
\leq & \ \left( \frac{1}{M_0^\gamma} + \frac{1}{M_1^\gamma} \right) \delta_\gamma \ .
\end{align*}
The step marked with (*) holds because there is no jump during $[t-\delta_\gamma, t]$ for $t \in K_\varepsilon$ as soon as $\delta_\gamma < \varepsilon$. Taking limits and using Eq. \eqref{eq:inf-sup2}, we conclude that:
\begin{align}
  \label{eq:K_epsilon_control}
  \limsup_{\gamma \rightarrow \infty}
  \sup_{t \in K_\varepsilon}
  \left|  \pi_{t}^{\delta_\gamma, \gamma}
       - \left( \xb_t
    + (\pi^\gamma_{t} - \xb_t) \, e^{ -D_t^\gamma } \right) 
  \right|
\leq & \ \limsup_{\gamma \rightarrow \infty} \frac{\delta_\gamma}{\eta} = 0 \ .
\end{align}
This limit holds on $\Omega_\eta'$ for all $\eta>0$. We have thus proven that away from jumps the Hausdorff distance tends to zero. This concludes the proof of Eq. \eqref{eq:marteau2}.

\bigskip
\noindent
{\bf Step 2: Proof of Eq. \eqref{eq:smoothing_cv_L}: $\lim_{\gamma \rightarrow \infty} {\rm d}_\L( \pi^{\delta_\gamma, \gamma}, \xb )  = 0$.}
\bigskip

From the definition of the distance ${\rm d}_\L$
\begin{align*}
 {\rm d}_\L( \pi^{\delta_\gamma, \gamma}, \xb )
= & \ \int_0^H  \left| \pi^{\delta_\gamma, \gamma}_t  - \xb_t \right| dt =\int_{J_\varepsilon}  \left| \pi^{\delta_\gamma, \gamma}_t  - \xb_t \right| dt+\int_{K_\varepsilon} \left| \pi^{\delta_\gamma, \gamma}_t  - \xb_t \right| dt \\
\leq & \ 2 \varepsilon L + \int_{K_\varepsilon} \left| \pi^{\delta_\gamma, \gamma}_t  - \xb_t \right| dt \\
\leq & \ 2 \varepsilon L + {\rm d}_\L\left( \pi^\gamma, \xb \right) + 
       \int_{K_\varepsilon} \left| \pi^{\delta_\gamma, \gamma}_t  - 
       \left( \xb_t + ( \pi^\gamma - \xb_t ) e^{-D^\gamma_t} \right) \right| dt 
       \ \ .
\end{align*}
Now, notice that because $\lim_{\gamma \rightarrow \infty} {\rm d}_\L\left( \pi^\gamma, \xb \right) = 0$ and the estimate from Eq. \eqref{eq:K_epsilon_control}, we have:
$$ \limsup_{\gamma \rightarrow \infty} {\rm d}_\L( \pi^{\delta_\gamma, \gamma}, \xb )
\leq 2 \varepsilon L $$
for all $\varepsilon>0$. Since $L<\infty$ almost surely, this concludes the proof of Eq. \eqref{eq:smoothing_cv_L}.

\bigskip
\noindent
{\bf Step 3: Hausdorff proximity around jump times: proof of Eq. \eqref{eq:marteau1}.}
\medskip

Thanks to the triangle inequality, to prove Eq. \eqref{eq:marteau1}, it is sufficient to prove that 

\begin{equation}
\label{eq:perceuse}
   \limsup_{\gamma \to \infty} 
   {\rm d}_\H\left( 
   \Gc^\gamma \cap J_{\varepsilon}^\square,  J_{\varepsilon}^\square
   \right)
   \lesssim \varepsilon 
\end{equation}
and
\begin{equation}
\label{eq:perceuse3}
   \limsup_{\gamma \to \infty}
   {\rm d}_\H\left( 
   \Gc^{\gamma,\circ}  \cap J_{\varepsilon}^\square,  J_{\varepsilon}^\square
   \right)
   \lesssim \varepsilon \ .
\end{equation}

For Eq. \eqref{eq:perceuse}, it is then sufficient to show that on $\Omega_\eta$:
\begin{align}
\label{eq:101}
   \forall t \in J_\varepsilon,\quad  \exists (s,y) \in J_\varepsilon^\square, & 
   \quad \left| (t,\pi_t^{\delta_\gamma, \gamma}) - (s,y) \right|
   \lesssim \varepsilon   \ ,
   \\
   \label{eq:102}
   \forall (t,x) \in J_\varepsilon^\square, \quad
   \exists s \in J_\varepsilon, & 
   \quad \left| (s,\pi_s^{\delta_\gamma, \gamma}) - (t,x) \right| 
   \lesssim \varepsilon \ .
\end{align}

The first inequality \eqref{eq:101} is readily obtained by noticing that $J_\varepsilon^\square$ contains vertical lines at the moment of jumps:
$$ \cup_i \{J_i\} \times [0,1] \subset J_\varepsilon^\square \ .$$
As such, we simply need to pick $s=J_i$ and $y= \pi_t^{\delta_\gamma, \gamma}$, where $i$ is such that $t\in [J_i-\varepsilon, J_i + \varepsilon]$. 

For the second inequality \eqref{eq:102} we notice that we can assume that $t$ is a jump time of $\xb$ since any $(t,x) \in J_\varepsilon^\square$ is at distance at most $\varepsilon$ from a jump time. Hence we assume $t=J_i$ for some $i$. Observe now that
\begin{align*}
    & \inf_j \inf_{s \in [J_j-\varepsilon, J_j+\varepsilon]} \vert {\xb}_s- \pi_{s}^{\delta_\gamma, \gamma} \vert \\
\le \ & \frac{1}{2\varepsilon} \sum_j  \int_{s \in [J_j-\varepsilon, J_j+\varepsilon]} \vert {\xb}_s- \pi_{s}^{\delta_\gamma, \gamma} \vert \, ds \\
\le \ & \frac{1}{2\varepsilon} \int_{0}^H \vert {\xb}_s- \pi_{s}^{\delta_\gamma, \gamma} \vert \, ds
\\=\ & \frac{1}{2\varepsilon} {\rm d}_\L( \pi^{\delta_\gamma, \gamma}, \xb )
\end{align*}
which goes to $0$ as $\gamma$ goes to infinity by Eq. \eqref{eq:smoothing_cv_L}.  Observe that the process $\xb$ takes different values in $[J_i -\varepsilon, J_i)$ and in $(J_i, J_i+\varepsilon]$. The previous bound implies that
\begin{equation}
\label{eq:IVT}
\forall \ 1\le j \le L, \quad \exists \,  s, s' \in [J_j-\varepsilon, J_j+ \varepsilon] \quad \text{\rm{ s.t. }} \quad \pi_{s}^{\delta_\gamma, \gamma} \le \eta_\gamma, \quad 1-\pi_{s'}^{\delta_\gamma, \gamma} \le \eta_\gamma
\end{equation}
where $\lim_{\gamma \to \infty} \eta_\gamma =0$. Because $\pi^{\delta_\gamma, \gamma}$ is continuous, the Intermediate Value Theorem states that Eq. \eqref{eq:102} is satisfied for $\gamma$ large enough. Hence we have proved Eq. \eqref{eq:perceuse}.

To prove Eq. \eqref{eq:perceuse3}, we recall that $\mathcal G^{\gamma,\circ}$ (defined by Eq. \eqref{eq:ggammacirct}) contains the vertical bar $[0,1]$ when there is a jump of $\xb$. It follows then immediately that
\begin{align}
   \forall (t,x) \in \Gc^{\gamma,\circ} \cap J_\varepsilon^\square,\quad  \exists (s,y) \in J_\varepsilon^\square, & 
   \quad \left| (t,x) - (s,y) \right|
   \lesssim \varepsilon   \ ,
   \\
   \forall (t,x) \in J_\varepsilon^\square,\quad  \exists (s,y) \in\Gc^{\gamma,\circ} \cap J_\varepsilon^\square, & 
   \quad \left| (t,x) - (s,y) \right|
   \lesssim \varepsilon   \ ,
\end{align}
and Eq. \eqref{eq:perceuse3} then follows.
\end{proof}

\section{Proof of Main Theorem: Study of the damping term}
\label{section:proof}

Thanks to Proposition \ref{proposition:reduction_to_damping} the establishment of Theorem  \ref{thm:main} is reduced to the proof of 
\begin{equation*}
\lim_{\gamma \to \infty} {\rm d}_{\mathbb H} \left({\Gc}^{\gamma, \circ}, \Gc^\infty  \right) =0 \ .
\end{equation*}
Above, as mentioned above, the convergence is understood almost surely since we can always assume the existence of a coupling between the processes involved in this limit. We recall that $\Gc^{\gamma, \circ}$ is defined via Eq. \eqref{eq:ggammacirct}. In Proposition \ref{proposition:more_reduction_to_damping} we will show that this can be done by showing the two following facts:
\bigskip
\begin{itemize}
\item If $C<2$ then
\begin{equation}
\label{eq:mt-01}
\lim_{\gamma \to \infty} D_t^\gamma =0\ ,
\end{equation}
for the relevant times $t$, which correspond to a spike.\\
\item If $C>8$ then
\begin{equation}
\label{eq:mt-02}
\lim_{\gamma \to \infty} D_t^\gamma =\infty\ .
\end{equation}
again, for the relevant times $t$ corresponding to a spike.
\end{itemize}

\medskip 

In order to prove Proposition \ref{proposition:more_reduction_to_damping} we need to introduce several definitions. 

\subsection{Decomposition of trajectory}
%
%

Recall that without loss of generality, we may assume the almost sure convergence of $\pi^\gamma$ to the spike process $\X$ -- see the discussion in the sketch of proof of the Main Theorem \ref{thm:main}.

Let $\varepsilon>0$ be a sufficiently small positive number, which will go to zero at the end of the proof, but after the large $\gamma$ limit. We define a sequence of stopping times by $S^\gamma_0 (\varepsilon) = 0$ and, via induction on $j \geq 1$, by (see Fig. \ref{fig:traj}), 
$$ T^\gamma_{j-1} (\varepsilon)
   :=
   \inf\left\{ t \geq S_{j-1}(\varepsilon) 
             \ \Big| \ \pi^\gamma_t \leq \tfrac{\varepsilon}{2} \quad \text{or} \quad \pi^\gamma_t \geq 1-\tfrac{\varepsilon}{2} \right\} \ ,
$$

$$ S^\gamma_j (\varepsilon)
   :=
   \inf\left\{ t \geq T_{j-1}^\gamma (\varepsilon)  
             \ \Big| \ \pi^\gamma_t \geq \varepsilon  \quad \text{and} \quad \pi^\gamma_t \leq 1-\varepsilon\right\} \ .
$$
To enlighten the notation, the dependence in $\gamma$ and $\varepsilon$ of these stopping times is in the sequel usually omitted. The $S_j$'s have to be understood as the \textit{starting} times of spikes (or jumps), and the $T_j$'s have to be understood as the \textit{terminating} times of spikes (or jumps). Observe there exists a finite random variable $N:=N_{\varepsilon}^\gamma$ such that $S_N >H$, i.e. almost surely  there are $N$ intervals $[S_j, T_j]$ completely included in $[0,H]$. Indeed, we know from our previous work \cite{bernardin2018spiking}, that $\pi^\gamma$ converges a.s. to $\X$ for the Hausdorff topology on graphs as $\gamma$ goes to infinity. Also, for any $\varepsilon >0$, there are finitely many spikes of size larger than $\varepsilon$ (for $\mathbb X$). Therefore $N^\gamma_\varepsilon$ is necessary a.s. bounded independently of $\gamma$. 


\begin{center}
\begin{figure}
  \includegraphics[width=0.8\linewidth]{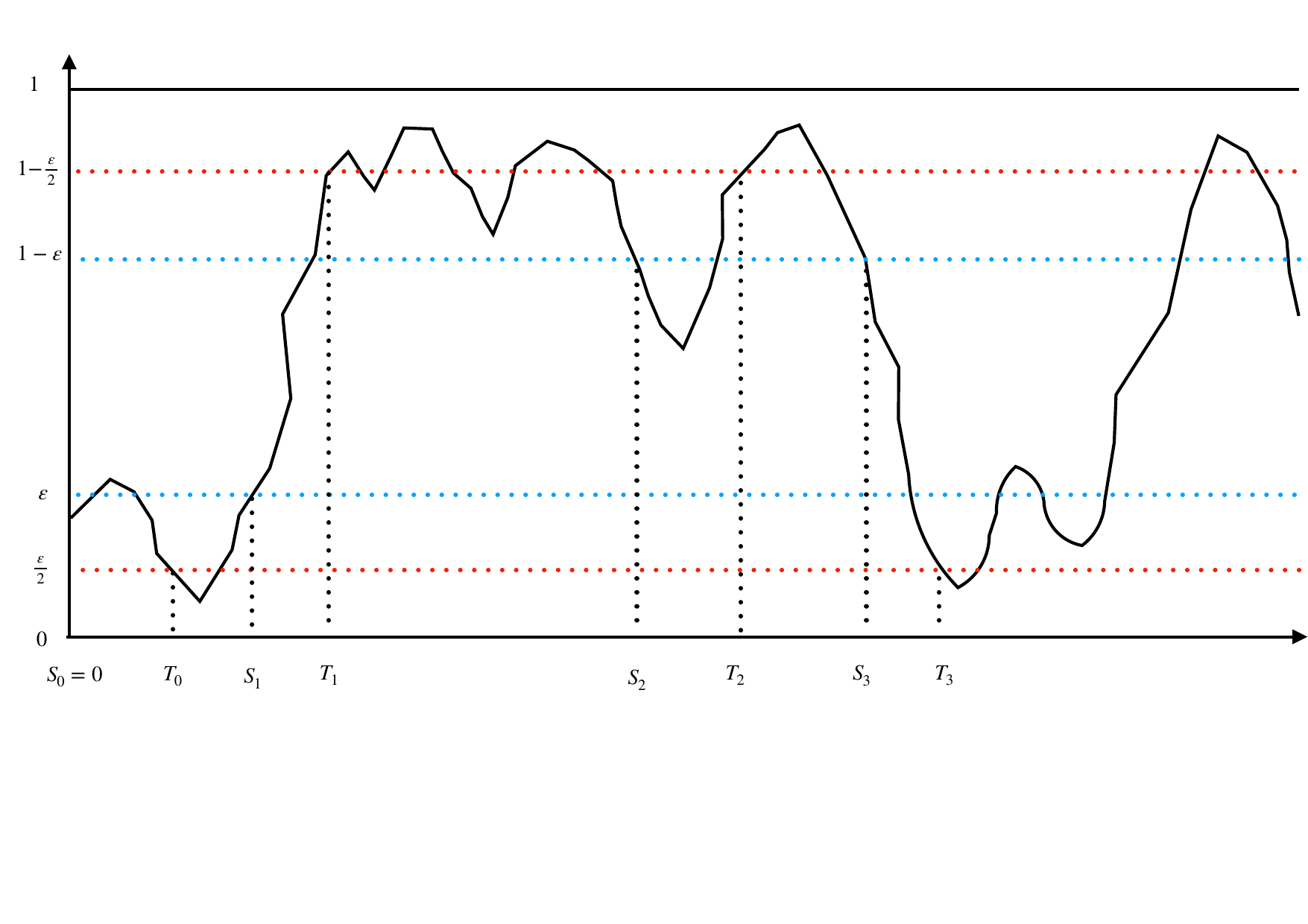}
  \caption{Decomposition of trajectory.}
  \label{fig:traj}
\end{figure}
\end{center}

\medskip

Let us start with a useful lemma, which will permit, in the proof of Proposition \ref{proposition:more_reduction_to_damping}, to avoid to control the damping outside the excursion intervals $\bigsqcup_j [S_j, T_j]$.

\begin{lemma}
\label{lemma:reduction}
Assume $0 \leq {\rm d}_\H( \pi^\gamma, \X ) \leq \varepsilon  < \eta < \half$. 
For all $t \in [0,H] \backslash \bigsqcup_j [S_j, T_j]$, we have:
$$ \left| \pi_t^\gamma - \xb_t \right| \leq \varepsilon \ ,$$
on the event $\Omega^\prime_\eta=\{ M^* < 1 - 2 \eta \} \cap \Omega_\eta$ defined by Eq. \eqref{eq:Omegaprime} and $M^*$ by Eq. \eqref{eq:biggestspike}.
\end{lemma}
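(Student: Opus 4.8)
The plan is to localize the problem time-by-time and use the Hausdorff proximity $\mathrm{d}_\H(\pi^\gamma, \X)\le\varepsilon$ together with the explicit description of the spike process $\X$ on the complement of the excursion intervals $\bigsqcup_j [T_j, S_j]$. First I would fix $t\in [0,H]\backslash\bigsqcup_j [T_j,S_j]$; by definition of the stopping times $T_j$ and $S_j$, such a $t$ lies outside every interval where $\pi^\gamma$ has entered the ``spiking zone'' $(\varepsilon, 1-\varepsilon)$ after having been pinned near $0$ or $1$. More precisely, outside $\bigsqcup_j[T_j,S_j]$ the trajectory $\pi^\gamma_t$ satisfies $\pi^\gamma_t \le \varepsilon$ or $\pi^\gamma_t \ge 1-\varepsilon$ (one must check this carefully at the left endpoint and initial segment, but it follows from the recursive definition: between $S_{j-1}$ and $T_j$ the process stays in $[0,\tfrac{\varepsilon}{2}]\cup[1-\tfrac{\varepsilon}{2},1]$ up to the first return to $(\varepsilon,1-\varepsilon)$, hence $\le\varepsilon$ or $\ge 1-\varepsilon$ on that whole interval; a symmetric remark covers $[0,T_0]$). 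So it remains to rule out the ``wrong'' pinning, i.e. to show $\pi^\gamma_t\le\varepsilon$ exactly when $\xb_t=0$ and $\pi^\gamma_t\ge 1-\varepsilon$ exactly when $\xb_t=1$.

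Next I would use the Hausdorff bound to transfer information from $\X$ to $\pi^\gamma$. Since $\mathrm{d}_\H(\Gc(\pi^\gamma),\X)\le\varepsilon$, the point $(t,\pi^\gamma_t)$ lies within Euclidean distance $\varepsilon$ of some point $(s,y)\in\X$, so $|s-t|\le\varepsilon$ and $|y-\pi^\gamma_t|\le\varepsilon$. Now I invoke the structure of $\X$: on the event $\{M^*<1-2\eta\}$, every spike has height at most $1-2\eta<1-\varepsilon$ (using $\varepsilon<\eta<\tfrac12$, indeed $2\eta>\varepsilon$ need only $\eta>\varepsilon/2$, which holds), meaning that for $\xb_s=0$ one has $\X_s\subset[0,1-2\eta]$ and for $\xb_s=1$ one has $\X_s\subset[2\eta,1]$. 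Combined with the event $\Omega_\eta$, which guarantees jump times of $\xb$ are $2\eta$-separated, and with the fact that $t$ (hence $s$, up to $\varepsilon<\eta$) is at distance at least... here I need that $t$ being outside the excursion intervals forces $s$ to also avoid the tiny neighborhoods of jump times of $\xb$; this is where I would argue that near a jump time of $\xb$ the filter $\pi^\gamma$ is forced into $(\varepsilon,1-\varepsilon)$ (by Hausdorff proximity to the vertical bar of $\X$), hence into some $[T_j,S_j]$, contradicting $t\notin\bigsqcup_j[T_j,S_j]$. Thus $\xb_s=\xb_t$ and $\X_s$ does not contain the full segment.

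Putting this together: if $\xb_t=0$ then $y\in\X_s\subset[0,1-2\eta]$, and in fact, since $t\notin[T_j,S_j]$ forces $\pi^\gamma_t\notin(\varepsilon,1-\varepsilon)$ and $|y-\pi^\gamma_t|\le\varepsilon$, the only consistent possibility is $\pi^\gamma_t\le\varepsilon$ (the alternative $\pi^\gamma_t\ge 1-\varepsilon$ would force $y\ge 1-2\varepsilon>1-2\eta$, contradicting $y\in\X_s\subset[0,1-2\eta]$ as soon as $\varepsilon<\eta$). Hence $|\pi^\gamma_t-\xb_t|=\pi^\gamma_t\le\varepsilon$. The case $\xb_t=1$ is symmetric. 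I expect the main obstacle to be the bookkeeping at the ``seam'' between the two regimes — rigorously justifying that $t\notin\bigsqcup_j[T_j,S_j]$ together with the Hausdorff bound genuinely excludes the mismatched pinning (and handling the initial segment $[0,T_0]$ and the behavior right at $\varepsilon$ versus $\varepsilon/2$ thresholds), rather than any deep estimate; all the analytic content is already packaged in the convergence $\mathrm{d}_\H(\pi^\gamma,\X)\to 0$ and in the a.s. finiteness of the number of macroscopic spikes.
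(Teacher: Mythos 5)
Your overall strategy coincides with the paper's: use $t\notin\bigsqcup_j[T_j,S_j]$ to pin $\pi^\gamma_t$ within $\varepsilon$ of $\{0,1\}$, pick a nearest point $(s,y)\in\X$ with $|s-t|\le\varepsilon$ and $|y-\pi^\gamma_t|\le\varepsilon$, and use $M^*<1-2\eta$ together with $\varepsilon<\eta$ to exclude the mismatched pinning; your final case analysis is an equivalent variant of the paper's bookkeeping through $\hat\xb_t=\mathds{1}_{\{\pi^\gamma_t>\half\}}$. The one genuinely delicate point is the one you flagged yourself: the nearest point $(s,y)$ could sit on a vertical bar of $\X$ at a jump time, where $\X_s=[0,1]$ and the inclusion $\X_s\subset[0,1-2\eta]$ (for $\xb_s=0$) simply fails, and where one must also make sure that $\xb_s=\xb_t$. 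That is exactly where your argument has a gap.

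Your proposed fix is that if $t$ (hence $s$) were within order $\varepsilon$ of a jump time $J$, then Hausdorff proximity to the bar $\{J\}\times[0,1]$ would force $\pi^\gamma$ into $(\varepsilon,1-\varepsilon)$, hence $t$ into some $[T_j,S_j]$, contradicting the hypothesis. This does not follow: proximity to the bar only guarantees that $\pi^\gamma$ comes within $\varepsilon$ of both levels $0$ and $1$ at \emph{some} times in $[J-\varepsilon,J+\varepsilon]$, hence (by the intermediate value theorem) that \emph{some} time in that window lies in an excursion interval — not that every such time does. Concretely, just after a jump of $\xb$ from $0$ to $1$ the filter may lag: there can be times $t\in(J,J+\varepsilon)$ with $\pi^\gamma_t<\varepsilon$, hence $t\notin\bigsqcup_j[T_j,S_j]$, while $\xb_t=1$; such a configuration is perfectly compatible with ${\rm d}_\H(\Gc(\pi^\gamma),\X)\le\varepsilon$, so your claimed contradiction never materializes, and at such $t$ the very bound you are trying to prove is in jeopardy — which shows the jump-window case cannot be dismissed by this kind of soft argument, it has to be excluded. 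The paper closes this step differently: having fixed the nearest point $(s,x)$, it deduces $\xb_s=\hat\xb_t$ from $|x-\xb_s|\le M^*<1-2\eta$ and $|x-\hat\xb_t|\le2\varepsilon$, and then uses the event $\Omega_\eta$ together with $|s-t|\le\varepsilon<\eta$ to rule out a jump \emph{between $s$ and $t$}, concluding $\xb_t=\xb_s=\hat\xb_t$; note also that where the lemma is actually invoked (in the proof of Proposition \ref{proposition:more_reduction_to_damping}) the relevant times lie in $K_\varepsilon$, i.e. at distance at least $\varepsilon$ from every jump time, and for such $t$ the difficulty you identified disappears, since then no jump can occur in the interval between $s$ and $t$. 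So: same approach and correct endgame, but the step you described as the main obstacle is indeed the obstacle, and the argument you sketch for it fails as stated.
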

\begin{proof}
By definition, we have for such times $t$:
$$ \pi_t^\gamma \wedge (1-\pi_t^\gamma) \leq \varepsilon \ ,$$
so that the natural estimator for $\bf x_t$ is
$ \hat{\xb}_t := \mathds{1}_{\{ \pi_t^\gamma > \half \}}$ .
By definition of Hausdorff distance, there exists a pair $(s,x) \in [0,H] \times \X_s$ such that
$$ |t-s|^2 + |x-\pi_t^\gamma|^2 \leq {\rm d}_\H( \pi^\gamma, \X )^2 \ ,$$
which implies
$$ |x - \hat{\xb}_t | 
\leq |x - \pi_t^\gamma | + |\pi_t^\gamma - \hat{\xb}_t | 
\leq {\rm d}_\H( \pi^\gamma, \X ) + \varepsilon
\leq 2 \varepsilon \ .
$$
On the event $\{ M^* < 1 - 2 \eta \}$, it entails that 
$$ | x - \xb_s | \leq M^* < 1 - 2 \eta \ .$$
Necessarily 
$$ | \xb_s  - \hat{\xb}_t | < 1 - 2\eta + 2 \varepsilon < 1  \ ,$$
which amounts to equality. Using that $|s-t| \leq \varepsilon < \eta$, there are no jumps between $s$ and $t$ on the event $\Omega_\eta$. We thus have $\xb_s = \xb_t = \hat{\xb}_t$.
\end{proof}

We consider the event $\Omega^\prime_\eta=\{ M^* < 1 - 2 \eta \} \cap \Omega_\eta$ defined by Eq. \eqref{eq:Omegaprime} and on such event, for or any $\varepsilon < \eta$, we denote the finite random set
\begin{equation}
\label{eq:Iepsilongamma}
I_{\varepsilon}^{\gamma} := 
\left\{ j \in \{ 1, \ldots, N^\varepsilon_\gamma\} \; ; \; [S_j^\gamma (\varepsilon), T_j^\gamma (\varepsilon)] \cap K_\varepsilon \neq \emptyset \right\} \ .
\end{equation}

\bigskip
{\bf Separation argument:} See Figure \ref{fig:sep} for a comprehensive graphical explanation of the following argument. A single segment $[S_j, T_j]$ corresponds, in the large $\gamma$ limit, to either a spike of size larger than $\varepsilon$, or a jump. Because $[S_j, T_j] \cap K_\varepsilon \neq \emptyset$, we are far from jumps and the segment $[S_j, T_j]$ necessarily corresponds to a spike. Notice that multiple $[S_j, T_j]$ can correspond to the same spike $\{s\}\times \X_s$ in the limit. 

Therefore for a spike $\{s\} \times \mathbb X_s$ of size larger than $\varepsilon$, with $0\le s\le H$, we denote by $I_\varepsilon (s)$ the random finite set of indexes $j$'s such that the interval $[S_j, T_j]$ asymptotically coalesces to the time location $s$ of the spike:
\begin{equation*}
\begin{split}
I_\varepsilon (s) 
  & = \left\{ j \in {\mathbb N} \; ; \; \lim_{\gamma \to \infty} S_j^\gamma (\varepsilon) = \lim_{\gamma \to \infty} T_j^\gamma (\varepsilon) =  s \right\} \ . 
   \end{split}
\end{equation*}
The equality between the two limits follows from Corollary 2.4 in \cite{bernardin2018spiking}. Indeed, by this corollary we know that the time spent by $\pi^\gamma$ in the interval $[\varepsilon/2, 1-\varepsilon/2]$ during the time window $[0,H]$ is of order $\Oc_\gamma^\varepsilon(1/\gamma)$. Hence $ \sum_j |T_j-S_j| = \Oc_\gamma^\varepsilon(1/\gamma)$. Since $(\pi^\gamma)_\gamma>0$ converges a.s. to $\mathbb X$ in the Hausdorff topology in the large $\gamma$ limit, this implies the existence of the limits above.


\begin{figure}
\centering
\includegraphics[width=0.8\linewidth]{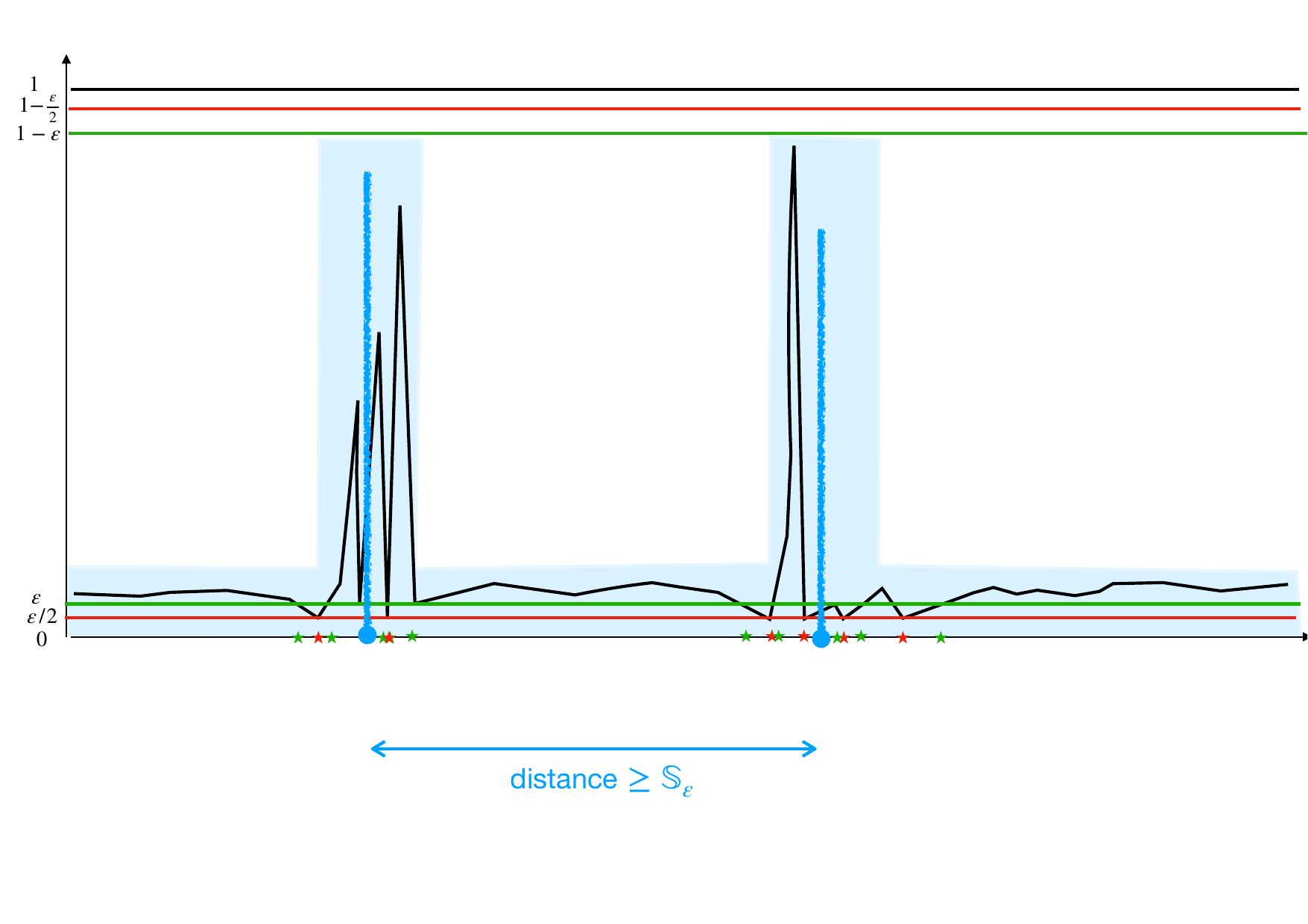}
\caption{{\bf{The separation argument}}. In blue is represented the spike process restricted on some interval of time where, to have a comprehensive picture, we have only two spikes with time position $s$ and $s'$ of length bigger than $\varepsilon$ and separated by a time-distance at least $\mathbb S_{\varepsilon}$. The spikes of size smaller than $\varepsilon$ are not represented. The green stars correspond to the stopping times $S_j$'s and the red stars to the stopping times $T_j$'s. The black curve representing the trajectory of $\pi^\gamma$ is contained in a ${\rm d}_\H (\Gc (\pi^\gamma), \mathbb X)$-thickening (painted in blue) of the blue graph of $\mathbb X$.}
  \label{fig:sep}
\end{figure}

The spikes (for $\mathbb X$) larger than $\varepsilon$ are separated by a random constant ${\mathbb S}_\varepsilon>0$ and,  by definition of Hausdorff distance, we have then that{\footnote{The Hausdorff distance in dimension one is defined similarly to the one in dimension two.}}:
\begin{align}
\label{eq:S_epsilon_def}
 s \neq s' \implies 
 & {\rm d}_{\mathbb H} \left( \cup_{j \in I_\varepsilon (s) } [S_{j}, T_{j}], \cup_{j \in I_{\varepsilon}  (s')} [S_{j}, T_{j}] \right) > {\mathbb S}_\varepsilon - {\rm d}_\H (\Gc (\pi^\gamma), \mathbb X)  \ .
\end{align}

Thanks to this, supposing the spike at $s$ is starting from $0$, i.e. $\pi_{S_j}^\gamma =\varepsilon$, we can strengthen the claim:
$$ \forall u \in [S_j, T_j], \quad 1 - \tfrac{\varepsilon}{2} \geq \pi^\gamma_u $$
for any $j \in I_\varepsilon (s)$ to\\
\begin{equation}
\label{eq:strenghtened_claim}
\begin{split}
&{\text {For any}} \quad  u \in \left[S_j \wedge  \left( T_j - {\mathbb S}_\varepsilon + {\rm d}_\H (\Gc (\pi^\gamma), \mathbb X)  \right), \ T_j \vee \left( S_j + {\mathbb S}_\varepsilon - {\rm d}_\H (\Gc (\pi^\gamma), \mathbb X)  \right)\right]  \ ,\\
&{\text {we have that }}  \quad 1 - \tfrac{\varepsilon}{2} \geq \pi^\gamma_u \ .
\end{split}
\end{equation}

Similarly, supposing the spike at $s$ is starting from $1$, i.e. $\pi_{S_j}^\gamma =1-\varepsilon$, we can strengthen the claim:
$$ \forall u \in [S_j, T_j], \quad \tfrac{\varepsilon}{2} \leq \pi^\gamma_u $$
for any $j \in I_\varepsilon(s)$ to\\
\begin{equation}
\begin{split}
\label{eq:strenghtened_claim2}
&{\text {For any}} \quad  u \in \left[S_j \wedge  \left( T_j - {\mathbb S}_\varepsilon + {\rm d}_\H (\Gc (\pi^\gamma), \mathbb X)  \right), \ T_j \vee \left( S_j + {\mathbb S}_\varepsilon - {\rm d}_\H (\Gc (\pi^\gamma), \mathbb X)  \right)\right]  \ ,\\
&{\text {we have that }} \quad  \tfrac{\varepsilon}{2} \leq \pi^\gamma_u \ .
\end{split}
\end{equation}

\medskip

\begin{proposition}
\label{proposition:more_reduction_to_damping}
Recall the definition of the damping term $D^\gamma$ given in Eq. \eqref{eq:dampingterm} and of the set $I_\varepsilon^\gamma$ in Eq. \eqref{eq:Iepsilongamma}. Assume that either\\
\begin{align}
\label{eq:no_damping}
C < 2 \quad \textrm{ and } \quad 
\forall \varepsilon>0, \quad 
\lim_{\gamma \rightarrow \infty}\
\sup_{j\in I_\varepsilon^\gamma} \ \sup_{t \in [S_j^\gamma (\varepsilon) , T_j^\gamma (\varepsilon) ]}
D_t^\gamma =  0 \ 
\end{align}
or 
\begin{align}
\label{eq:damping}
C > 2 \quad \textrm{ and } \quad 
\forall \varepsilon>0, \quad
\lim_{\gamma \rightarrow \infty} \ 
\inf_{j\in I_\varepsilon^\gamma} \ \inf_{t \in [S^\gamma_j (\varepsilon ), T_j^\gamma (\varepsilon)]}
D_t^\gamma
= \infty \ .
\end{align}
Then we have
\begin{equation*}
\lim_{\gamma \to \infty} {\rm d}_\H ( \mathcal G^\gamma, \mathcal G^\infty)  =0 \ .
\end{equation*}

\end{proposition}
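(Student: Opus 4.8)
The plan is to combine Proposition~\ref{proposition:reduction_to_damping} with the trajectory decomposition so as to reduce the statement to an elementary three-piece Hausdorff estimate driven by the hypotheses \eqref{eq:no_damping}--\eqref{eq:damping}. Since we work under a coupling for which $\pi^\gamma\to\X$ a.s.\ for ${\rm d}_\H$ and ${\rm d}_\L(\pi^\gamma,\xb)\to 0$, Proposition~\ref{proposition:reduction_to_damping} already yields ${\rm d}_\H(\Gc^\gamma,\Gc^{\gamma,\circ})\to 0$, so it suffices to prove ${\rm d}_\H(\Gc^{\gamma,\circ},\Gc^\infty)\to 0$, where $\Gc^{\gamma,\circ}_t=\Gc_t(\xb)+(\pi^\gamma_t-\xb_t)e^{-D^\gamma_t}$. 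It is convenient to compare $\Gc^{\gamma,\circ}$ with the intermediate graph $\Gc^\sharp:=\Gc(\xb)$ if $C>2$ and $\Gc^\sharp:=\Gc(\pi^\gamma)$ if $C<2$: in both cases $\Gc^\sharp\to\Gc^\infty$ for ${\rm d}_\H$ (trivially if $C>2$, by hypothesis if $C<2$), so it is enough to show ${\rm d}_\H(\Gc^{\gamma,\circ},\Gc^\sharp)\to 0$. I fix $\eta>0$, restrict to the event $\tilde\Omega_\eta\cap\{M^*<1-2\eta\}\cap\Omega_\eta$ (of probability $\to 1$ as $\eta\to 0$), fix $\varepsilon<\eta$, take $\gamma$ large enough that ${\rm d}_\H(\Gc(\pi^\gamma),\X)<\varepsilon$ and $\delta_\gamma<\varepsilon$, and split $[0,H]$ into $G:=[0,H]\setminus\bigsqcup_j[T_j,S_j]$, the far excursions $\bigsqcup_{j\in I_\varepsilon^\gamma}[T_j,S_j]$, and the near-jump excursions $\bigsqcup_{j\notin I_\varepsilon^\gamma}[T_j,S_j]\subset J_\varepsilon$; the union bound \eqref{eq:hausdu} then reduces matters to a separate estimate on each region, to be followed by $\gamma\to\infty$ and then $\varepsilon,\eta\to 0$.

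On $G$, Lemma~\ref{lemma:reduction} gives $|\pi^\gamma_t-\xb_t|\le\varepsilon$, and since $\Gc^{\gamma,\circ}_t$ is the singleton $\{\xb_t+(\pi^\gamma_t-\xb_t)e^{-D^\gamma_t}\}$ with $e^{-D^\gamma_t}\in[0,1]$, its distance to $\Gc^\sharp_t$ (which is $\{\xb_t\}$ if $C>2$, or $\{\pi^\gamma_t\}$ if $C<2$) is at most $|\pi^\gamma_t-\xb_t|\le\varepsilon$ in either case. On the near-jump region $\bigsqcup_{j\notin I_\varepsilon^\gamma}[T_j,S_j]\subset J_\varepsilon$, one argues exactly as in Step~3 of the proof of Proposition~\ref{proposition:reduction_to_damping}: $\Gc^{\gamma,\circ}$ contains the vertical bar $\{J_i\}\times[0,1]$ at each jump time $J_i$, so $\Gc^{\gamma,\circ}\cap J_\varepsilon^\square$ is ${\rm d}_\H\le\varepsilon$ from $J_\varepsilon^\square$; and $\Gc^\sharp\cap J_\varepsilon^\square$ is ${\rm d}_\H\lesssim\varepsilon+o_\gamma(1)$ from $J_\varepsilon^\square$ (immediately if $\Gc^\sharp=\Gc(\xb)$, and by the Intermediate Value Theorem applied to the continuous $\pi^\gamma$, which sweeps from near $0$ to near $1$ across each jump, if $\Gc^\sharp=\Gc(\pi^\gamma)$); hence ${\rm d}_\H\lesssim\varepsilon$ on this region by the triangle inequality.

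The decisive region is the far one, $\bigsqcup_{j\in I_\varepsilon^\gamma}[T_j,S_j]$. Here the separation argument preceding the proposition shows that each $[T_j,S_j]$ coalesces as $\gamma\to\infty$ to a genuine spike $\{s\}\times\X_s$ with $s$ at positive distance from the jump times of $\xb$, so that $\xb\equiv\xb_s$ on $[T_j,S_j]$ for $\gamma$ large and $\Gc_t(\xb)=\{\xb_s\}$. If $C>2$, \eqref{eq:damping} forces $e^{-D^\gamma_t}\to 0$ uniformly over this region, so the distance from $\Gc^{\gamma,\circ}_t=\{\xb_s+(\pi^\gamma_t-\xb_s)e^{-D^\gamma_t}\}$ to $\Gc^\sharp_t=\{\xb_s\}$ is $\le e^{-D^\gamma_t}\to 0$ uniformly. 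If $C<2$, \eqref{eq:no_damping} forces $e^{-D^\gamma_t}\to 1$ uniformly, so the distance from $\Gc^{\gamma,\circ}_t$ to $\Gc^\sharp_t=\{\pi^\gamma_t\}$ is $|\pi^\gamma_t-\xb_s|\,|1-e^{-D^\gamma_t}|\le|1-e^{-D^\gamma_t}|\to 0$ uniformly. Combining the three regions through \eqref{eq:hausdu} gives $\limsup_{\gamma\to\infty}{\rm d}_\H(\Gc^{\gamma,\circ},\Gc^\sharp)\lesssim\varepsilon$, and since this holds on $\tilde\Omega_\eta\cap\{M^*<1-2\eta\}\cap\Omega_\eta$ for every $\varepsilon<\eta$, letting first $\varepsilon\to 0$ and then $\eta\to 0$ yields ${\rm d}_\H(\Gc^{\gamma,\circ},\Gc^\sharp)\to 0$ a.s., hence ${\rm d}_\H(\Gc^\gamma,\Gc^\infty)\to 0$ a.s.

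The only genuinely delicate ingredients — beyond the verification of \eqref{eq:no_damping}--\eqref{eq:damping}, which is the content of the following subsections — are of bookkeeping nature: checking that the excursion intervals not meeting $K_\varepsilon$ really lie inside $J_\varepsilon$ (so that their number and location are slaved to the finitely many jumps of $\xb$), that $\Gc^{\gamma,\circ}$ and $\Gc^\sharp$ carry matching vertical bars at those jump times, and that the random count $N=N^\varepsilon_\gamma$ of excursions remains a.s.\ finite, so that the order of limits $\gamma\to\infty$ then $\varepsilon,\eta\to 0$ is legitimate; all of this runs parallel to Steps~1--3 of the proof of Proposition~\ref{proposition:reduction_to_damping}.
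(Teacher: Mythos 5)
Your overall strategy coincides with the paper's: reduce via Proposition \ref{proposition:reduction_to_damping} to comparing $\Gc^{\gamma,\circ}$ with an intermediate graph ($\Gc(\pi^\gamma)$ when $C<2$, $\Gc(\xb)$ when $C>2$), treat neighbourhoods of the jump times by the vertical bars plus the Intermediate Value Theorem, and use the damping hypotheses slice by slice on the excursions indexed by $I_\varepsilon^\gamma$, with Lemma \ref{lemma:reduction} elsewhere. The estimates you invoke on the far excursions and relative to $J_\varepsilon^\square$ are exactly the ones in the paper.

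However, your three-region decomposition into $G$, Far and Near has a genuine gap. First, $G=[0,H]\setminus\bigsqcup_j[T_j,S_j]$ contains the ``lag'' times just after each jump $J_i$ of $\xb$: there $\pi^\gamma_t$ is still below $\varepsilon$ (the excursion tracking the jump has not yet started), so $|\pi^\gamma_t-\xb_t|$ is of order $1$, the factor $e^{-D^\gamma_t}$ is not controlled by \eqref{eq:no_damping}--\eqref{eq:damping} (these only concern $j\in I_\varepsilon^\gamma$), and Lemma \ref{lemma:reduction} cannot give $|\pi^\gamma_t-\xb_t|\le\varepsilon$ at such $t$: its conclusion is simply false inside the lag window, and its proof requires $t$ to be at distance more than $\varepsilon$ from the jumps. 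Hence your slice-wise bound ``$\le\varepsilon$ on $G$'' fails precisely there. Second, your Near estimate compares $\Gc^{\gamma,\circ}\cap J_\varepsilon^\square$ and $\Gc^\sharp\cap J_\varepsilon^\square$, whereas the union bound \eqref{eq:hausdu} applied to your three regions requires the comparison on $\mathrm{Near}^\square=\bigsqcup_{j\notin I_\varepsilon^\gamma}[T_j,S_j]\times[0,1]$; since a jump time $J_i$ need not belong to any excursion interval, the graphs restricted to $\mathrm{Near}^\square$ need not contain the vertical bars on which your argument relies. Both defects have the same cure, which is the paper's bookkeeping: split $[0,H]$ into $J_\varepsilon$ and $K_\varepsilon$ rather than along excursions, run the bar-plus-IVT argument on $J_\varepsilon^\square$ (this absorbs the lag times, which lie within order $\varepsilon$ of a jump), and only inside $K_\varepsilon$ distinguish excursion times, where \eqref{eq:no_damping} gives $1-e^{-D^\gamma_t}\to0$ (resp. \eqref{eq:damping} gives $e^{-D^\gamma_t}\to0$), from non-excursion times, where Lemma \ref{lemma:reduction} applies legitimately. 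With that repair your argument becomes the paper's proof; as written, the union bound over $G$, Far and Near does not close.
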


\begin{proof}
By Proposition \ref{proposition:reduction_to_damping} and the triangle inequality it is sufficient to prove 
\begin{equation*}
\lim_{\gamma \to \infty} {\rm d}_\H ( \mathcal G^{\gamma,\circ}, \mathcal G^\infty)  =0 \ .
\end{equation*}

\bigskip
\noindent
{\bf Case 1: $C<2$.} Then $\mathcal G^\infty = \mathbb X$ and thanks to the triangle inequality:
$$ {\rm d}_\H ( \mathcal G^{\gamma,\circ}, \mathbb X)
   \leq
   {\rm d}_\H ( \mathcal G^{\gamma,\circ}, \mathcal G\left( \pi^\gamma \right) )
   +
   {\rm d}_\H ( \mathcal G\left( \pi^\gamma \right), \X) \ .
$$
The second term goes to zero thanks to Theorem \ref{thm:aap} of \cite{bernardin2018spiking}. It thus suffices to show that
\begin{equation}
\label{eq:onemore}
\lim_{\gamma \to \infty} {\rm d}_\H ( \mathcal G^{\gamma,\circ}, \mathcal G\left( \pi^\gamma \right) ) = 0 \ .
\end{equation}
As such, starting with the definition of Hausdorff distance, we have that:
\begin{equation}
\label{eq:bobound22}
\begin{split}
  {\rm d}_\H ( \mathcal G^{\gamma,\circ}, \mathcal G\left( \pi^\gamma \right) )
&= \sup_{\substack{a \in \Gc^{\gamma, \circ}\\ b \in \Gc(\pi^\gamma)}  } \left\{ {\rm d} \left(a, \Gc(\pi^\gamma) \right) , {\rm d} \left(b,  \mathcal G^{\gamma,\circ} \right)\right\}\\
& \le \sup_{a \in \Gc^{\gamma, \circ}  } {\rm d} \left(a, \Gc(\pi^\gamma) \right)
    + \sup_{b \in \Gc(\pi^\gamma)      } {\rm d} \left(b,  \mathcal G^{\gamma,\circ} \right) \\
& \le \sup_{a \in \Gc^{\gamma, \circ} \cap K_\varepsilon^\square} {\rm d} \left(a, \Gc(\pi^\gamma) \right)
    + \sup_{a \in \Gc^{\gamma, \circ} \cap J_\varepsilon^\square} {\rm d} \left(a, \Gc(\pi^\gamma) \right) \\
& 
    + \sup_{b \in \Gc(\pi^\gamma) \cap K_\varepsilon^\square} {\rm d} \left( b,  \mathcal G^{\gamma,\circ} \right)
    + \sup_{b \in \Gc(\pi^\gamma) \cap J_\varepsilon^\square} {\rm d} \left( b,  \mathcal G^{\gamma,\circ} \right)  \ .
\end{split}
\end{equation}
On $K_\varepsilon^\square$, we are dealing with graphs of functions, so that
\begin{align*}
\sup_{a \in \Gc^{\gamma, \circ}  } {\rm d} \left(a, \Gc(\pi^\gamma) \right) &= \sup_{t \in K_\varepsilon} {\rm d} \left( \xb_t + (\pi_t^\gamma -\xb_t) \,  e^{-D_t^\gamma}, \Gc (\pi^\gamma)\right) \\
&\le \sup_{t \in K_\varepsilon} \left|  \xb_t + \left( \pi_t^\gamma - \xb_t \right) e^{-D_t^\gamma} - \pi^\gamma_t \right|
\end{align*}
where the inequality comes from a ``slice by slice'' bound. The same argument gives that
\begin{equation*}
\sup_{b \in \Gc(\pi^\gamma) \cap K_\varepsilon^\square} {\rm d} \left( b,  \mathcal G^{\gamma,\circ} \right) \le \sup_{t \in K_\varepsilon} \left|  \xb_t + \left( \pi_t^\gamma - \xb_t \right) e^{-D_t^\gamma} - \pi^\gamma_t \right| \ .
\end{equation*}
Recalling Eq. \eqref{eq:bobound22}, we get
\begin{equation*}
\begin{split}
  {\rm d}_\H ( \mathcal G^{\gamma,\circ}, \mathcal G\left( \pi^\gamma \right) )
& \le 2 \sup_{t \in K_\varepsilon} \left|  \xb_t + \left( \pi_t^\gamma - \xb_t \right) e^{-D_t^\gamma} - \pi^\gamma_t \right|
    \\
    &+ \sup_{a \in \Gc^{\gamma, \circ} \cap J_\varepsilon^\square} {\rm d} \left(a, \Gc(\pi^\gamma) \right)     + \sup_{b \in \Gc(\pi^\gamma) \cap J_\varepsilon^\square} {\rm d} \left( b,  \mathcal G^{\gamma,\circ} \right) \\
    & \le 2 \sup_{t \in K_\varepsilon}  \left\{ \left|  \pi_t^\gamma - \xb_t \right|  \big( 1 - e^{-D_t^\gamma} \big)  \right\} \\
&+ \sup_{a \in \Gc^{\gamma, \circ} \cap J_\varepsilon^\square} {\rm d} \left(a, \Gc(\pi^\gamma) \right) 
    + \sup_{b \in \Gc(\pi^\gamma) \cap J_\varepsilon^\square} {\rm d} \left( b,  \mathcal G^{\gamma,\circ} \right) \\
    &   \le 2   \sup_{t \in K_\varepsilon} \left\{ \left|  \pi_t^\gamma - \xb_t \right| \big( 1 - e^{-D_t^\gamma} \big) \right\} \\
& + \sup_{a \in J_\varepsilon^\square} {\rm d} \left(a, \Gc(\pi^\gamma) \right) 
    + \sup_{b \in J_\varepsilon^\square} {\rm d} \left( b,  \mathcal G^{\gamma,\circ} \right) \ .
\end{split}
\end{equation*}
Now, any point in $J_\varepsilon^\square$ is at most at distance $\varepsilon$ of $J_0^\square = \sqcup_i \{ J_i \} \times [0,1]$. This gives:
\begin{equation*}
\begin{split}
   {\rm d}_\H ( \mathcal G^{\gamma,\circ}, \mathcal G\left( \pi^\gamma \right) )
& \le 2 \varepsilon
    + \sup_{a \in J_0^\square } {\rm d} \left(a, \Gc(\pi^\gamma) \right) 
    + \sup_{b \in J_0^\square } {\rm d} \left( b,  \mathcal G^{\gamma,\circ} \right) \\
&   + 2  \sup_{t \in K_\varepsilon} \left\{ \left|  \pi_t^\gamma - \xb_t \right|  \big( 1 - e^{-D_t^\gamma} \big) \right\}  \ .
\end{split}
\end{equation*}
Because $\mathcal G^{\gamma,\circ}$ contains the set of vertical lines $J_0^\square$, $\sup_{b \in J_0^\square } {\rm d} \left( b,  \mathcal G^{\gamma,\circ} \right) = 0$. Regarding the term $\sup_{a \in J_0^\square } {\rm d} \left(a, \Gc(\pi^\gamma) \right) $, we know that
\begin{equation}
\forall \ 1\le j \le L, \quad \exists s, s' \in [J_j-\varepsilon, J_j+ \varepsilon] \quad \text{\rm{ s.t. }} \quad \pi_{s}^{\gamma} \le \eta_\gamma, \quad 1-\pi_{s'}^{\gamma} \le \eta_\gamma
\end{equation}
with $\lim_{\gamma \to \infty} \eta_\gamma =0$. This claim can be proved exactly as for Eq. \eqref{eq:IVT}, replacing $\pi^{\delta_\gamma, \gamma}$ by the simpler process $\pi^\gamma$ during the proof. Since $\pi^\gamma$ has continuous trajectories, the Intermediate Value Theorem implies that
\begin{equation*}
\lim_{\gamma \to \infty} \sup_{a \in J_0^\square } {\rm d} \left(a, \Gc(\pi^\gamma) \right) =0 \ .
\end{equation*}
Therefore, by sending $\varepsilon$ to $0$ after sending $\gamma$ to infinity, Eq. \eqref{eq:onemore} is proved once we show that
\begin{equation}
\label{eq:HCERES}
\begin{split}
\limsup_{\varepsilon \to 0} \limsup_{\gamma \to \infty} \  \sup_{t \in K_\varepsilon} \left\{ \left|  \pi_t^\gamma - \xb_t \right| \big( 1 - e^{-D_t^\gamma} \big) \right\} =0 \ .
\end{split}
\end{equation}
Observe now that
\begin{align*}
&   \sup_{t \in K_\varepsilon} \left\{ \left| \pi_t^\gamma - \xb_t \right|  \big( 1- e^{-D_t^\gamma} \big) \right\} \\
\leq &  \sup_{j \le N}  \left\{ \sup_{t \in K_\varepsilon \cap [S_j, T_j]} \left\{  \left| \pi_t^\gamma - \xb_t \right|  \big( 1- e^{-D_t^\gamma} \big) \right\}
       +
       \sup_{t \notin \sqcup_k [S_k, T_k] } \left| \pi_t^\gamma - \xb_t \right| \right\}  \\
\leq &  \sup_{j \in I_\varepsilon^\gamma} \left\{ \sup_{t \in K_\varepsilon \cap [S_j, T_j]} \big( 1- e^{-D_t^\gamma} \big) 
      \right\} 
      +
       \sup_{t \notin \sqcup_k [S_k, T_k] } \left| \pi_t^\gamma - \xb_t \right| \ .
\end{align*}
Thanks to Lemma \ref{lemma:reduction}, we find on the good event, that
\begin{align*}
     & \limsup_{\varepsilon \to 0} \limsup_{\gamma \to \infty} \,  \sup_{t \in K_\varepsilon} \left\{ \left|  \pi_t^\gamma - \xb_t \right| \big( 1 - e^{-D_t^\gamma} \big) \right\}   \\
\leq &  \limsup_{\varepsilon \to 0} \limsup_{\gamma \to \infty} \   \sup_{j \in I_\varepsilon^\gamma}\;   \sup_{t \in K_\varepsilon \cap [S_j, T_j]} \big( 1- e^{-D_t^\gamma} \big)  \ .
\end{align*}
In the last line we used also Eq. \eqref{eq:Iepsilongamma}. Because of the inequality $1-e^{-x} \le x$ for all $x\ge 0$, Eq. \eqref{eq:HCERES} follows from assumption \eqref{eq:no_damping} and the proof is complete.

%
%

\bigskip
\noindent
{\bf Case 2: $C>2$.} Here $\Gc^\infty =\Gc (\xb)$. The proof in this case is slightly different. By Eq. \eqref{eq:hausdu}, we have that 
\begin{align*}
{\rm d}_\H( \Gc^{\gamma,\circ}, \Gc (\xb))
& \le {\rm d}_\H\left( 
   \Gc^{\gamma,\circ} \cap J_{\varepsilon}^\square, 
   \Gc (\xb) \cap J_{\varepsilon}^\square
   \right)
   +
   {\rm d}_\H\left( 
   \Gc^{\gamma,\circ} \cap K_{\varepsilon}^\square, 
  \Gc (\xb) \cap K_{\varepsilon}^\square
   \right) \ .
\end{align*}
Since the graphs $\Gc (\xb)$ and $\Gc^{\gamma, \circ}$ contain both the set $J_0^\square$ of vertical lines, we get easily that 
\begin{equation*}
 \limsup_{\varepsilon \to 0} \limsup_{\gamma \to \infty} {\rm d}_\H\left( 
   \Gc^{\gamma,\circ} \cap J_{\varepsilon}^\square, J_{\varepsilon}^\square 
   \right) =0
\end{equation*}
and
\begin{equation*}
 \limsup_{\varepsilon \to 0} {\rm d}_\H\left( 
   \Gc (\xb) \cap J_{\varepsilon}^\square, J_{\varepsilon}^\square 
   \right) =0
\end{equation*}
so that it remains only to prove that
\begin{equation*}
  \limsup_{\varepsilon \to 0} \limsup_{\gamma \to \infty} 
   {\rm d}_\H\left( 
   \Gc^{\gamma,\circ} \cap K_{\varepsilon}^\square, 
  \Gc (\xb) \cap K_{\varepsilon}^\square
   \right) =0 \ .
\end{equation*}
Since, on $K_\varepsilon$, we are dealing with the graphs of functions, we give a bound ``slice by slice'':
\begin{align*}
   & 
   {\rm d}_\H\left( 
   \Gc^{\gamma,\circ} \cap K_{\varepsilon}^\square, 
   \Gc( \xb ) \cap K_{\varepsilon}^\square
   \right)\\
\leq & \sup_{t \in K_\varepsilon} \left| \xb_t + \left( \pi_t^\gamma - \xb_t \right) e^{-D_t^\gamma}
       - \xb_t \right| \\
\leq & \sup_{t \in K_\varepsilon} \left| \pi_t^\gamma - \xb_t \right| e^{-D_t^\gamma} \\
\leq & \sup_{j\le L} \left\{ \sup_{t \in K_\varepsilon \cap [S_j, T_j]} e^{-D_t^\gamma}
   \    + \
       \sup_{t \notin \sqcup [S_j, T_j] } \left| \pi_t^\gamma - \xb_t \right| \right\} \\
\leq & \exp\left( -\inf_{j\le L}  \inf_{t \in K_\varepsilon \cap [S_j, T_j]} D_t^\gamma \right) 
    \    + \ 
       \sup_{j \le L} \sup_{t \notin \sqcup [S_j, T_j] } \, \left| \pi_t^\gamma - \xb_t \right| \ .
\end{align*}
Thanks to Lemma \ref{lemma:reduction}, we find on the good event $\Omega'_\eta$ (recall Eq. \eqref{eq:Omegaprime}):
\begin{align*}
   & 
   {\rm d}_\H\left( 
   \Gc^{\gamma,\circ} \cap K_{\varepsilon}^\square, 
   \Gc( \pi^\gamma) \cap K_{\varepsilon}^\square
   \right)\\
\leq & \ \varepsilon
       +
       \exp\left( -\inf_{j \le L}  \inf_{t \in K_\varepsilon \cap [S_j, T_j]} D_t^\gamma \right) \\
= & \ \varepsilon
       +
       \exp\left( -\inf_{j \in I_\varepsilon^\gamma} \inf_{t \in K_\varepsilon \cap [S_j, T_j]} D_t^\gamma \right) \ .
\end{align*}
Notice in the last equality the appearance of the index set $I_\varepsilon^\gamma$ because if $j \notin I_\varepsilon^\gamma$ then $K_\varepsilon \cap [S_j, T_j] =\emptyset$ (see the definition in Eq. \eqref{eq:Iepsilongamma}). This latter bound goes to zero by assumption \eqref{eq:damping}.
\end{proof}

\subsection{Coordinates via logistic regression}
\label{subsection:logit}

In the previous paper \cite{bernardin2018spiking}, a crucial role was played by the scale function which is the unique change of variable $f = h_\gamma$ such that  $f(\pi_t^\gamma)$ is a martingale. This uniqueness is of course up to affine transformations. Another useful change of variable is as follows. Instead of asking for a vanishing drift, one can ask for a constant volatility term. 

\begin{lemma}
Recall Eq. \eqref{eq:sde_repeated}. Up to affine transformations, the unique function $f:(0,1)\to \mathbb R$ such that $f(\pi^\gamma)$ has constant volatility term is the logistic function
$$ f: x\in (0,1) \to f(x):=\log \tfrac{x}{1-x} \ .$$
The process $Y_t^\gamma := f( \pi_t^\gamma )$ satisfies:
\begin{align}
   dY_t^\gamma
   = &  \label{eq:champ1}
   \sqrt{\gamma} dW_t
   +
   \left( \lambda (2p-1)
           + \lambda p e^{-Y_t^\gamma}
           -
           \lambda (1 - p) e^{Y_t^\gamma}
           +
           \dfrac\gamma 2 \tanh \left(\tfrac{Y_t^\gamma}{2} \right)
     \right) dt \\
   = &   \label{eq:champ2}
   \sqrt{\gamma} dB_t
   +
     \left( \lambda (2p-1)
           + \lambda p e^{-Y_t^\gamma}
           -
           \lambda (1 - p) e^{Y_t^\gamma}
           + \gamma \left( \xb_t - \half \right)
     \right) dt  \ 
\end{align}
where $(W_t \; ; \; t\ge 0)$ and $(B_t \; ; \; t\ge 0)$ are related by Eq. \eqref{eq:innovation}.
\end{lemma}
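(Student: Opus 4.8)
The plan is to derive both identities from a single application of Itô's formula to the SDE~\eqref{eq:sde_repeated}, combined with the change of variable $x\mapsto\log\tfrac{x}{1-x}$. First I would treat the characterisation of $f$: applying Itô's formula to $f(\pi_t^\gamma)$ for an arbitrary $C^2$ function $f$ on $(0,1)$ and using $d\langle\pi^\gamma\rangle_t=\gamma(\pi_t^\gamma)^2(1-\pi_t^\gamma)^2\,dt$, the martingale part of $f(\pi^\gamma)$ has volatility coefficient $\sqrt{\gamma}\,f'(\pi_t^\gamma)\,\pi_t^\gamma(1-\pi_t^\gamma)$. This is constant along every trajectory exactly when $x\mapsto f'(x)\,x(1-x)$ equals some constant $c$, i.e. $f'(x)=c\big(\tfrac{1}{x}+\tfrac{1}{1-x}\big)$; integrating gives $f(x)=c\log\tfrac{x}{1-x}+d$, which is the logistic function up to affine transformations, and normalising the volatility coefficient to $\sqrt{\gamma}$ forces $c=1$.

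Next I would compute the drift of $Y^\gamma:=f(\pi^\gamma)$ with $f(x)=\log\tfrac{x}{1-x}$, for which $f'(x)=\tfrac{1}{x}+\tfrac{1}{1-x}=\tfrac{1}{x(1-x)}$ and $f''(x)=-\tfrac{1}{x^2}+\tfrac{1}{(1-x)^2}$. Itô's formula yields
\[
dY_t^\gamma=\sqrt{\gamma}\,dW_t+\Big(-\lambda(\pi_t^\gamma-p)\,f'(\pi_t^\gamma)+\tfrac{\gamma}{2}\,(\pi_t^\gamma)^2(1-\pi_t^\gamma)^2\,f''(\pi_t^\gamma)\Big)\,dt\ .
\]
The Itô correction collapses by the elementary identity $\big(\pi^2(1-\pi)^2\big)f''(\pi)=\pi^2-(1-\pi)^2=2\pi-1$, so it equals $\gamma(\pi_t^\gamma-\tfrac{1}{2})$; and the first-order term is handled by the partial fraction decomposition $\tfrac{\pi-p}{\pi(1-\pi)}=-\tfrac{p}{\pi}+\tfrac{1-p}{1-\pi}$, giving $-\lambda(\pi_t^\gamma-p)f'(\pi_t^\gamma)=\lambda\tfrac{p}{\pi_t^\gamma}-\lambda\tfrac{1-p}{1-\pi_t^\gamma}$.

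Then I would pass to the variable $Y$ via $\pi=e^{Y}/(1+e^{Y})$, so that $\tfrac{1}{\pi}=1+e^{-Y}$, $\tfrac{1}{1-\pi}=1+e^{Y}$ and $\pi-\tfrac{1}{2}=\tfrac{1}{2}\tanh(Y/2)$. Substituting turns the first-order term into $\lambda p(1+e^{-Y})-\lambda(1-p)(1+e^{Y})=\lambda(2p-1)+\lambda p\,e^{-Y}-\lambda(1-p)\,e^{Y}$ and the Itô correction into $\tfrac{\gamma}{2}\tanh(Y_t/2)$, which is exactly~\eqref{eq:champ1}. Finally,~\eqref{eq:champ2} would follow by inserting the innovation relation~\eqref{eq:innovation} in the form $\sqrt{\gamma}\,dW_t=\sqrt{\gamma}\,dB_t+\gamma(\xb_t-\pi_t^\gamma)\,dt$ into~\eqref{eq:champ1} and using the cancellation $\gamma(\xb_t-\pi_t^\gamma)+\gamma(\pi_t^\gamma-\tfrac{1}{2})=\gamma(\xb_t-\tfrac{1}{2})$.

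I do not expect a genuine obstacle: the whole argument is routine stochastic calculus together with elementary algebra. The one point deserving a word of justification is that $f$ and its derivatives are evaluated along the path of $\pi^\gamma$, which is legitimate because the boundary points $0$ and $1$ are non-attainable for~\eqref{eq:sde_repeated} (near $0$ the drift tends to $\lambda p>0$ while the diffusion coefficient vanishes, and symmetrically near $1$), so that $\pi_t^\gamma\in(0,1)$ for every $t>0$ and $Y^\gamma$ is well defined; if one also wants to include $t=0$, it suffices to assume $\pi_0^\gamma\in(0,1)$.
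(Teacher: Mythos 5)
Your proposal is correct and follows essentially the same route as the paper: Itô's formula applied to $f(\pi^\gamma)$, identification of $f$ through the constancy of $f'(x)x(1-x)$, the partial-fraction and $\tanh$ rewriting of the drift in the variable $Y$, and the substitution of the innovation relation \eqref{eq:innovation} to pass from \eqref{eq:champ1} to \eqref{eq:champ2}. The added remark on non-attainability of the boundary points $0$ and $1$ is a harmless (and welcome) extra justification not spelled out in the paper.
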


\begin{proof}

This elementary lemma is proved in Appendix \ref{app:logistic}. 

\end{proof}

Given this information the instantaneous damping term in Eq. \eqref{eq:a_u} takes a particularly convenient expression:
\begin{align}
\label{eq:damping_Y}
    a_t^\gamma
= & \lambda p \ e^{Y_t^\gamma} \ 
    + \ 
    \lambda (1-p) \ e^{-Y_t^\gamma} \ .
\end{align}

\bigskip

{\bf Informal discussion:} In particular, in order to prove that the damping term $D^\gamma$ in Eq. \eqref{eq:dampingterm} either converges to zero or diverges to infinity, it suffices to control for $t \in [\delta_\gamma, H]$:
$$
   D^\gamma_t = \int_{t-\delta_\gamma}^t a_u^\gamma \ du
   \asymp
   \int_{t-\delta_\gamma} ^t \left( e^{Y^\gamma_u} + e^{-Y^\gamma_u} \right) \ du \ .
$$
Depending on whether $\pi^\gamma_u \approx 0$ ($Y^\gamma_u \approx -\infty$) or $\pi^\gamma_u \approx 1$ ($Y^\gamma_u \approx \infty$), one of the two expressions in the integrand is dominant. Assuming $\pi^\gamma_u \approx 0$ on the entire interval $[t-\delta_\gamma,t]$, we have:
$$
   \int_{t-\delta_\gamma}^t a_u^\gamma \ du
   \asymp
   \int_{t-\delta_\gamma}^t e^{-Y^\gamma_u} \ du \ .
$$
Continuing:
\begin{align*}
   Y^\gamma_u - Y^\gamma_s
   = & 
   \sqrt{\gamma} \left( W_u - W_s \right)
   + 
   \int_s^u 
   \left( \frac{\lambda p}{\pi^\gamma_v}
           -
           \frac{\lambda (1 - p)}{1-\pi^\gamma_v}
           +
           \half \gamma \left( 2\pi^\gamma_v - 1 \right)
     \right) du \\
   = & 
   \sqrt{\gamma} \left( W_u - W_s \right)
   - 
   \half \gamma(u-s)
   +
   \int_s^u 
   \left( \frac{\lambda p}{\pi^\gamma_v}
           -
           \frac{\lambda (1 - p)}{1-\pi^\gamma_v}
           +
           \gamma \pi^\gamma_v
     \right) dv \ .
\end{align*}
We have thus proved the expression which is useful for $\pi^\gamma_u \approx 0$:
\begin{align}
\label{eq:expression_inc_Y}
   Y^\gamma_u - Y^\gamma_s
   = & 
   \sqrt{\gamma} \left( W_u - W_s \right)
   - 
   \half \gamma(u-s)
   +
   \int_s^u 
   \left( \frac{\lambda p}{\pi^\gamma_v}
           -
           \frac{\lambda (1 - p)}{1-\pi^\gamma_v}
           +
           \gamma \pi^\gamma_v
     \right) dv \ .
\end{align}
If $\pi^\gamma_u \approx 1$, then the useful expression is:
\begin{align}
\label{eq:expression_inc_Y2}
   Y^\gamma_u - Y^\gamma_s
   = & 
   \sqrt{\gamma} \left( W_u - W_s \right)
   + 
   \half \gamma(u-s)
   +
   \int_s^u 
   \left( \frac{\lambda p}{\pi^\gamma_v}
           -
           \frac{\lambda (1 - p)}{1-\pi^\gamma_v}
           -
           \gamma (1-\pi^\gamma_v)
     \right) dv \ .
\end{align}

\subsection{Path transforms} In order to systematically control the fluctuations of the process $Y^\gamma$, we make the following change of variables. For $t \geq 0$, define:
\begin{align}
\label{def:a}
\alpha^\gamma_t := & \ Y^\gamma_t - \log \left( \lambda p \right)  \ ,\\
\label{def:b}
\beta^\gamma_t := & \ \sqrt{\gamma}\  W_t - \frac{\gamma}{2} t + r^\gamma_t \ ,\\
\label{def:r}
r^\gamma_t := & \ \int_0^t \left[ \lambda(2p-1) - \lambda (1-p) e^{Y^\gamma_u} 
+ \frac{\gamma}{2}\left( 1 + \tanh\left(\tfrac{Y^\gamma_u}{2}\right) \right) 
\right]\, du \ .
\end{align}
The choice of letter for $r^\gamma$ is that it will later play the role of a residual quantity. Thanks to this reformulation, the SDE defining $Y$ in Eq. \eqref{eq:champ1} becomes:
\begin{align}
\label{eq:ode_ab}
d \alpha^\gamma_t = & \ d\beta^\gamma_t + e^{-\alpha^\gamma_t} dt \ .
\end{align}
The following lemma gives two ways of integrating Eq. \eqref{eq:ode_ab} -- in the sense that we consider $\beta^\gamma$ known, $\alpha^\gamma$ unknown and vice versa. 

\begin{lemma}
\label{lemma:path_transforms}
Consider two real-valued semi-martingales $\alpha$ and $\beta$ satisfying Eq. \eqref{eq:ode_ab}. Then for all $0 \leq s \leq t$, we have the forward and backward formulas:
$$
\alpha_{s,t} = \beta_{s,t} + \log\left( 1 + \frac{1}{e^{\alpha_s}} \int_s^t e^{-\beta_{s,u}} du \right) \ ,
$$
$$
\alpha_{s,t} = \beta_{s,t} - \log\left( 1 - \frac{1}{e^{\alpha_t}} \int_s^t e^{\beta_{u,t}} du \right) \ .
$$

\end{lemma}
\begin{proof}
See Appendix \ref{app:pathtransform}.
\end{proof}

\subsection{Controlling the residual \texorpdfstring{$r^\gamma$}{r} } Recall that in the context of Proposition \ref{proposition:more_reduction_to_damping}, we need to control:
$$ D_t^\gamma = \int_{t-\delta_\gamma}^t a^\gamma_u \ du $$
for $t\in [S_j^\gamma (\varepsilon), T_j^\gamma (\varepsilon)]$ where $j \in  I_\varepsilon^\gamma$, i.e. the $j$'s corresponding to a spike in the limit. 

%

\medskip

Examining this specific interval $[S^\gamma_j (\varepsilon), T^\gamma_j (\varepsilon)]$ with $j\in I_\varepsilon^\gamma$, it corresponds to one of the following two situations\\

\begin{enumerate}[i)]
\item $\pi_{S^\gamma_j (\varepsilon)}^\gamma =1-\varepsilon, \quad  \pi_{T^\gamma_j (\varepsilon)}^\gamma = 1- \varepsilon/2$: \quad  in the limit, it is a spike from $1$ to $1$ for $\mathbb X$\ ; \label{eq:spike11}\\
\item $\pi_{S_j^\gamma (\varepsilon)}^\gamma =\varepsilon, \quad \pi_{T^\gamma_j (\varepsilon)}^\gamma = \varepsilon/2$: \quad  in the limit, it is a spike from $0$ to $0$ for $\mathbb X$\ . \label{eq:spike00}\\
\end{enumerate}
By symmetry, we only have to consider case ii).

\begin{lemma}
\label{lemma:residual_control}
Recall that $r^\gamma_{s,t}$ denotes the increment of $r$ defined by Eq. \eqref{def:r}. Fix two arbitrary positive constants $A$ and $B$. Then, for all $j \in I_\varepsilon^\gamma$ corresponding to a spike from $1$ to $1$, i.e. like in \eqref{eq:spike11}, or a spike from $0$ to $0$, i.e. like in \eqref{eq:spike00}, the following holds
\begin{align}
\sup_{S^\gamma_j (\varepsilon) - A\delta_\gamma \leq s \leq t \leq T^\gamma_j (\varepsilon)+B\delta_\gamma}
|r^\gamma_{s,t}| = o_\gamma^\varepsilon( \log \gamma ) \ .
\end{align}
The implied function is random, yet finite, depends on $A,B$ and $\varepsilon>0$, but is independent of $j$ and $\gamma$.
\end{lemma}
\begin{proof}
To lighten the notation we omit sometimes during the proof the parameter $\gamma$. Moreover the reader has to remember the notations defined at the end of Section \ref{subsection:further_remarks}. 

\medskip

We prove the claim only in the case \eqref{eq:spike00} of a spike from $0$ to $0$, since the other case is similar. By definition of the $S_i$'s and $T_i$'s, we have that 
\begin{equation}
\label{eq:ineqTjSj}
\forall u \in [S_j, T_j], \quad   \varepsilon/2 \le \pi_u \le 1-\varepsilon/2 \ .
\end{equation}
In fact, thanks to the separation argument, the right bound holds on a much longer interval as claimed in Eq. \eqref{eq:strenghtened_claim}. As such, recalling the definition of $\S_\varepsilon$ from Eq. \eqref{eq:S_epsilon_def} and the fact that $\lim_{\gamma \to \infty} {\rm d}_\H (\Gc (\pi^\gamma), \mathbb X)=0$, if $\gamma$ is sufficiently large to have ${\mathbb S}_\varepsilon - {\rm d}_\H (\Gc (\pi^\gamma), \mathbb X)  > \sup(A,B) \ \delta_\gamma$, then
\begin{equation}
\label{eq:ubpourpi}
\sup_{S_j-A\delta_\gamma \le u \le  T_j+B\delta_\gamma} \pi_u \leq 1 - \varepsilon /2
\end{equation}
which implies
\begin{equation}
\label{eq:ubpourY}
 \sup_{S_j-A\delta_\gamma \le u \le T_j+B\delta_\gamma} Y_u \le \log(2/\varepsilon) \ .
\end{equation}

Again within the range $S_j - A\delta_\gamma \leq s \leq u \leq t \leq T_j + B\delta_\gamma$, let us control the process $r$ from Eq. \eqref{def:r}. We have:
\begin{align*}
     & \ \left| r_{s,t} \right| \\
\leq & \ 
\left[ \lambda(2p-1) + \lambda(1-p) \exp\left( \sup_{S_j-A\delta_\gamma \le u \le T_j +B \delta_\gamma} Y_u \right) \right] (t-s)
+
\frac{\gamma}{2} \int_s^{t} \left( 1 + \tanh\left(\tfrac{Y_u}{2}\right) \right) du \\
\leq & \ \Oc^\varepsilon_\gamma (1) \left( \delta_\gamma + T_j - S_j \right)
       + {\gamma}\int_s^{t} \pi_u du \ ,
\end{align*}
where we used in the last line the fact that
$$ 
   1 + \tanh\left(\tfrac{Y_u}{2}\right)
 = 1 + \frac{e^{Y_u}-1}{e^{Y_u}+1}
 = \frac{2}{1+e^{-Y_u}}
 = 2 \pi_u \ .
$$
By Corollary 2.4 in \cite{bernardin2018spiking} we know that the time spent by $\pi$ in the interval $[\half \varepsilon, 1-\half \varepsilon]$ during the time window $[0,H]$ is of order $\Oc_\gamma^\varepsilon(1/\gamma)$. Hence $\sum_j |T_j-S_j| = \Oc_\gamma^\varepsilon(1/\gamma)$. For any $\eta_\gamma$ going to $0$ as $\gamma$ goes to infinity, we have that:
\begin{align*}
\left| r_{s,t} \right| \leq & \ 
     \ \Oc_\gamma^\varepsilon(1) \left( \delta_\gamma + \frac{1}{\gamma} \right)
       + {\gamma} \int_s^{t} \pi_u du \\
\leq & \ 
     \ \Oc_\gamma^\varepsilon(1) \delta_\gamma
       + {\gamma} \int_s^{t} \pi_u \mathds{1}_{\{ \pi_u < \eta_\gamma \}}\, du
       + {\gamma} \int_s^{t} \pi_u \mathds{1}_{\{ \eta_\gamma \leq \pi_u \leq 1 - \half \varepsilon\}} \, du\\
\leq &  \ \Oc_\gamma^\varepsilon(1) \delta_\gamma
       + {\eta \gamma} \left( t - s\right) 
       + {\gamma} \int_s^{t} \pi_u \mathds{1}_{\{ \eta_\gamma \leq \pi_u \leq 1 - \half \varepsilon\}} \, du\\
\leq & \ \Oc_\gamma^\varepsilon(1) \delta_\gamma
       + \eta \gamma \Oc_\gamma^\varepsilon(1) \left( \delta_\gamma + \frac{1}{\gamma} \right) 
       + {\gamma} \int_s^{t} \pi_u \mathds{1}_{\{ \eta_\gamma \leq \pi_u \leq 1 - \half \varepsilon\}}\,  du\\
\leq & \ \Oc_\gamma^\varepsilon(1) \delta_\gamma
       + \Oc_\gamma^\varepsilon(1) \eta \log \gamma 
       + {\gamma} \int_s^{t} \pi^\gamma_u \mathds{1}_{\{ \eta_\gamma \leq \pi_u \leq 1 - \half \varepsilon\}} \, du \ .
\end{align*}

In order to control the last term in the above equation, we need to refine the previously invoked Corollary 2.4 in \cite{bernardin2018spiking}. This is done in Lemma \ref{lemma:additive_functional2} and we have then that
\begin{align*}  
\gamma \int_s^t \pi_u \mathds{1}_{\left\{ \eta_\gamma  \leq \pi_u \leq 1-\half \varepsilon \right\}}\, du
= & \ \Oc_\gamma^\varepsilon(\log |{\eta}_\gamma|) \ .
\end{align*}
As such, we can take $\eta_\gamma = \log \log \gamma/\log \gamma$ in order to have:
\begin{align*}
\left| r_{s,t} \right| \leq & \ 
     \ \Oc_\gamma^\varepsilon( \delta_\gamma ) 
       + \Oc_\gamma^\varepsilon( \log |\eta_\gamma| ) 
       + \Oc_\gamma^\varepsilon( \eta_\gamma \log \gamma ) \\
= & \ \Oc_\gamma^\varepsilon( \log \log \gamma ) \\
=  & \ o_\gamma^\varepsilon( \log \gamma ) \ .
\end{align*}
\end{proof}

\subsection{Fast feedback regime \texorpdfstring{$C<2$}{C<2} }
\label{subsection:fast_feedback}

To lighten the notation we omit usually in the sequel the parameter $\gamma$. Moreover the reader has to remember the notations defined at the end of Section \ref{subsection:further_remarks}. 

\medskip

As announced in Proposition \ref{proposition:more_reduction_to_damping}, we only need to prove a uniform absence of damping:
\begin{align}
\lim_{\gamma \rightarrow \infty}
\sup_{j \in I_\varepsilon^\gamma}
\sup_{t \in [S_j, T_j]}
D_t^\gamma
= 0 \ . 
\end{align}

\bigskip

We proceed by symmetry, as in the proof of Lemma \ref{lemma:residual_control}, by considering only spikes from $0$ to $0$. As in the proof of that lemma,  we have then that for any $t \in [S_j, T_j]$,
$$
 \sup_{t-\delta_\gamma \le u \le t} Y_u \le \Oc_\gamma^\varepsilon(1) \ .
$$
Hence:
\begin{align*}
D_t^\gamma=  \int_{t-\delta_\gamma}^t a_u \, du
&\lesssim
   \int_{t-\delta_\gamma}^t \left( e^{-Y_u} + e^{Y_u} \right) \, du \\
   &\lesssim_\varepsilon  \delta_\gamma + \int_{t-\delta_\gamma}^{t} e^{-Y_u} \, du \\
&=  \delta_\gamma + e^{-Y_{t}} \int_{t-\delta_\gamma}^t  e^{Y_t-Y_u} \, du \\
& \lesssim_\varepsilon \delta_\gamma + \int_{t-\delta_\gamma}^{t} e^{Y_{u,t}} \, du \ .  
\end{align*}
Let us now control this last term. Thanks to the reformulation of Eq. (\ref{def:a}--\ref{def:r}) and then the backward formula of Lemma \ref{lemma:path_transforms} we have:
\begin{align*}
  & \int_{t-\delta_\gamma}^{t} e^{Y_{u,t}} du \\
= & \int_{t-\delta_\gamma}^{t} e^{\alpha_{u,t}} du \\
= & \int_{t-\delta_\gamma}^{t} \frac{e^{\beta_{u,t}}}
                      {\left( 1 - \frac{1}{e^{\alpha_t}} \int_u^t e^{\beta_{v,t}} dv \right)}
                      du \\
= &  e^{\alpha_t}
     \int_{t-\delta_\gamma}^{t} \frac{ \frac{1}{e^{\alpha_t}} e^{\beta_{u,t}}}
                      {\left( 1 - \frac{1}{e^{\alpha_t}} \int_u^t e^{\beta_{v,t}} dv \right)}
                      du \\
= &  e^{\alpha_t}
     \int_{t-\delta_\gamma}^{t} \frac{d}{du}\left[  \log \left( 1 - \frac{1}{e^{\alpha_t}} \int_u^t e^{\beta_{v,t}} dv \right) \right] \ du  \\
= &  - e^{\alpha_t}
     \log \left( 1 - \frac{1}{e^{\alpha_t}} \int_{t-\delta_\gamma}^t e^{\beta_{u,t}} du \right) \ .
\end{align*}
Going back to the previous equation, we find:
\begin{align*}
  D_t^\gamma
& \lesssim_\varepsilon \delta_\gamma - e^{\alpha_t}
     \log \left( 1 - \frac{1}{e^{\alpha_t}} \int_{t-\delta_\gamma}^t e^{\beta_{u,t}} du \right) \\
& = \delta_\gamma - \frac{e^{Y_t}}{\lambda p}
     \log \left( 1 - \lambda p e^{-Y_t} \int_{t-\delta_\gamma}^t e^{r_{u,t} + \sqrt{\gamma} W_{u,t} - \frac{\gamma}{2} (t-u)} du \right) \ .
\end{align*}

Now, recall that since $t \in [S_j, T_j]$, $\pi_t$ is in the middle of a spike away from $0$ and $1$ -- see Eq. \eqref{eq:ineqTjSj}. As such $Y_t$ is bounded from below and from above. This is unlike for $u \in [t-\delta_\gamma, t]$, where $Y_u$ is bounded only from above. In any case, this yields $e^{Y_t} = \Oc^\varepsilon_\gamma(1)$ and $e^{-Y_t} = \Oc^\varepsilon_\gamma(1)$. Therefore it suffices to prove:
\begin{align}
   \label{eq:to_prove_fast_feedback}
    \sup_{S_j - \delta_\gamma \leq t \leq T_j} \int_{t-\delta_\gamma}^t 
    e^{r_{u,t} + \sqrt{\gamma} W_{u,t} - \frac{\gamma}{2} (t-u)} du
    & \stackrel{\gamma \rightarrow \infty}{\longrightarrow} 0 \ .
\end{align}
Focusing on Eq. \eqref{eq:to_prove_fast_feedback}, we have thanks to Lemma \ref{lemma:residual_control} and the change of variable $u = \frac{2 \log \gamma}{\gamma} w$:
\begin{align*}
  & \int_{t-\delta_\gamma}^t 
    e^{r_{u,t} + \sqrt{\gamma} W_{u,t} - \frac{\gamma}{2} (t-u)} du \\
= & \int_{0}^{\delta_\gamma}
    \exp\left(r_{t-u,t} + \sqrt{\gamma} W_{t-u,t} - \frac{\gamma}{2} u\right) du \\
= & \int_{0}^{\delta_\gamma}
    \exp\left( o_\gamma^\varepsilon(\log \gamma) + \sqrt{\gamma} W_{t-u,t} - \frac{\gamma}{2} u\right) du \\
= & \frac{2 \log \gamma}{\gamma}
    \gamma^{o_\gamma^\varepsilon(1)}
    \int_{0}^{\frac{C}{2}}
    \exp\left( \sqrt{\gamma} W_{t-\frac{2 \log \gamma}{\gamma} w,t} - w \log \gamma\right) dw \ .
\end{align*}
Recall now L\'evy's modulus of continuity theorem \cite[Chapter 1, Theorem 2.7]{revuz2013continuous}. Let $\beta := \left( \beta_t \ ; \ t \geq 0 \right)_{t\ge 0}$ be any fixed standard one dimensional Brownian motion and define
\begin{equation}
\label{eq:modulcont}
\begin{split}
&\omega_\beta (h) := \sup_{t,t' \in [0,H]^2, |t-t'|\leq h} 
             \frac{ \left| \beta_{t',t} \right| }
                  { \varphi(h) } \quad \text{with} \quad \varphi(h) = \sqrt{2h \log \tfrac{1}{h} } \ , \\
& \text{then a.s. (and therefore in probability): } \quad \lim_{h' \to 0} \ \sup_{0<h<h'}\omega_\beta (h) = 1 \ .
\end{split}
\end{equation} 

Because of the Dambis-Dubins-Schwartz coupling, $W = W^\gamma$ actually depends on $\gamma$. Therefore, the control provided by L\'evy's modulus of continuity cannot be used in its almost sure version but only in its probability convergence version. We introduce the notation:
$$
 \omega_{\gamma, C}
 :=
 \sup_{0\leq z \leq \frac{C}{2}} \omega_W \left(  \tfrac{2 \log \gamma}{\gamma} z  \right) 
$$
and we have that 
$$\omega_{\gamma, C}= 1 + o_{\P, \gamma}(1) $$
where $o_{\P, \gamma}(1) $ denotes a random variable converging to $0$ in probability as $\gamma$ goes to infinity. This is because for any $\delta>0$, we have that 
\begin{equation*}
\mathbb P \left( \left\vert \omega_{\gamma, C} - 1 \right\vert \ge \delta\right)= \mathbb P \left( \left\vert \sup_{0\leq z \leq \frac{C}{2}} \omega_\beta \left(  \tfrac{2 \log \gamma}{\gamma} z  \right) -1 \right\vert \ge \delta \right) \xrightarrow[{\gamma \to \infty}]{} 0 \ ,
\end{equation*}
the first equality holding because $W=W^\gamma$ and $\beta$ have the same law.

Going back to the proof of Eq. \eqref{eq:to_prove_fast_feedback}, we deduce by Lemma \ref{lemma:residual_control} that:
\begin{align*}
  & \int_{t-\delta_\gamma}^t 
    e^{r_{u,t} + \sqrt{\gamma} W_{u,t} - \frac{\gamma}{2} (t-u)} du \\
= & \frac{2 \log \gamma}{\gamma}
    \gamma^{o_\gamma^\varepsilon(1)}
    \int_{0}^{\frac{C}{2}}
    \exp\left( \sqrt{\gamma} W_{t-\frac{2 \log \gamma}{\gamma} w,t} - w \log \gamma\right) dw \\
\leq & \frac{2 \log \gamma}{\gamma}
    \gamma^{o_\gamma^\varepsilon(1)}
    \int_{0}^{\frac{C}{2}}
    \exp\left( \sqrt{\gamma} \ \omega_{\gamma,C}\ 
                              \varphi\left(  \tfrac{2 \log \gamma}{\gamma} w  \right) - w \log \gamma \right) dw \\
= & \frac{2 \log \gamma}{\gamma}
    \gamma^{ o_\gamma^\varepsilon(1) }
    \int_{0}^{\frac{C}{2}}
    \exp\left( 2\ \omega_{\gamma,C}
               \ \sqrt{w \log \gamma \log \tfrac{\gamma}{2 w \log \gamma}} - w \log \gamma  \right) dw \\
= & \frac{2 \log \gamma}{\gamma}
    \gamma^{ o_\gamma^\varepsilon(1) }
    \int_{0}^{\frac{C}{2}}
    \exp\left( 2\ \omega_{\gamma,C}
               \ \sqrt{w} (1+o_\gamma(1))\log \gamma - w \log \gamma  \right) dw \\
=    & \frac{2 \log \gamma}{\gamma}
    \gamma^{ o_\gamma^\varepsilon(1) }
    \int_{0}^{\frac{C}{2}}
    \exp\left( (\omega_{\gamma,C} -1 )
               2 \sqrt{w} (1+o_\gamma (1))\log \gamma \right)
     \gamma^{2 \sqrt{w} (1+o_\gamma (1)) - w } dw \\
\leq & \frac{2 \log \gamma}{\gamma}
    \gamma^{ o_\gamma^\varepsilon(1) }
    \exp\left( \left| \omega_{\gamma, C} -1 \right|
               \sqrt{2C} (1+o_\gamma(1))\log \gamma \right)
     \int_{0}^{\frac{C}{2}}
    \gamma^{2 \sqrt{w} (1+o(1)) - w } dw \\
=    & 2 \gamma^{ o_\gamma^\varepsilon(1) 
                + \left| -1 + \omega_{\gamma, C} \right|
               \sqrt{2C} (1+o_\gamma (1)) }
     \int_{0}^{\frac{C}{2}}
    \gamma^{ - (1-\sqrt{w})^2 } dw \\
 \leq & C \gamma^{ o_\gamma^\varepsilon(1) + o_{\mathbb P,\gamma}(1) } \gamma^{-(1-\sqrt{C/2})^2}
 \ .
\end{align*}

Observe that this upper bound goes to zero as $\gamma \rightarrow \infty$ for any $C<2$. Therefore we are done. We have proved Eq. \eqref{eq:no_damping}, which indeed gives no damping.


%

\subsection{Slow feedback regime \texorpdfstring{$C>8$}{C>2} }
\label{subsection:slow_feedback}

To lighten the notation we omit sometimes in the sequel the parameter $\gamma$. Moreover the reader has to remember the notations defined at the end of Section \ref{subsection:further_remarks}. 

\medskip

As announced in Proposition \ref{proposition:more_reduction_to_damping}, we only need to prove there is damping:
\begin{align}
\label{eq:inf_D007}
\lim_{\gamma \rightarrow \infty}
\inf_{j \in I_\varepsilon^\gamma}
\inf_{t \in [S_j, T_j]}
D_t^\gamma
= \infty \ . 
\end{align}

\bigskip

By Corollary 2.4 in \cite{bernardin2018spiking} we know that the time spent by $\pi$ in the interval $[\varepsilon/2, 1-\varepsilon/2]$ during the time window $[0,H]$ is of order $\Oc_\gamma^\varepsilon(1/\gamma)$. Hence $\sum_j |T_j-S_j| = \Oc_\gamma^\varepsilon(1/\gamma)$. Therefore, for $\gamma$ large enough, for any $j \in \{0, \ldots, N_\varepsilon^\gamma\}$, 
\begin{equation*}
T_j -\delta_\gamma \le S_j \ .
\end{equation*}
Hence, in the above infimum defined by Eq. \eqref{eq:inf_D007}, since for any $j\in I_\varepsilon^\gamma$ and $t\in [S_j,T_j]$, 
$$[T_j-\delta_\gamma, S_j] \subset  [t-\delta_\gamma, t]\ ,$$
and $a \ge 0$, we are reduced to prove that (recall Eq. \eqref{eq:dampingterm}):
$$
   \lim_{\gamma \to \infty} \int_{T_j-\delta_\gamma}^{S_j} a_u \,  du = \infty
   \ , 
$$
uniformly in $j \in I_\varepsilon^\gamma$,  i.e. for the $j$'s corresponding to a spike (recall the definition \eqref{eq:Iepsilongamma}).

\medskip
 As we have seen before, examining any interval $[S_j, T_j]$, $j \in I_\varepsilon^\gamma$, corresponds to one of the following two situations:\\
\begin{enumerate}[i)]
\item $\pi_{S_j} =1-\varepsilon, \, \pi_{T_j} = 1- \varepsilon/2$: in the limit, it is a spike from $1$ to $1$ for $\mathbb X$\ ; \label{eq:spike111}
\item $\pi_{S_j} =\varepsilon, \, \pi_{T_j}  = \varepsilon/2$: in the limit, it is a spike from $0$ to $0$ for $\mathbb X$\ . \label{eq:spike000}\\
\end{enumerate}
By symmetry, we only have to consider case \eqref{eq:spike000}. Since, by Eq. \eqref{eq:damping_Y}, 
\begin{equation}
\label{eq:int-eqcge2}
\int_{T_j-\delta_\gamma}^{S_j} a_u  \,  du \ge \lambda (1-p) \int_{T_j-\delta_\gamma}^{S_j} e^{-Y_u} \,   du 
\end{equation}
we only have to consider the limit of the integral on the right-hand side of the previous display{\footnote{In fact the neglected term in this inequality vanishes as $\gamma$ goes to infinity since $\pi$ remains at distance $\varepsilon/2$ from $1$ (hence $e^{Y}$ remains bounded) on the time interval considered, and the length of the time interval is of order $\delta_\gamma$, which goes to $0$ as $\gamma$ goes to infinity.}}.

\bigskip
\noindent
{\bf Step 1: Starting backward from $S_j$, the process $Y$ reaches $-\log \gamma + K$.} 
Here $K:= K(C, \lambda, p)$ is a large but fixed constant independent of $\gamma$ and $\varepsilon$. Let us prove that there is a random time $S_j-\tau := S_j-\tau_K \in [T_j-\delta_\gamma, S_j]$ such that:
$$ Y_{S_j-\tau} = - \log \gamma + K \ .$$
Without loss of generality, we can look for $S_j-\tau \in [S_j - \delta^\prime_\gamma, S_j]$ with $\delta^\prime_\gamma=C' \tfrac{\log \gamma}{\gamma}$ and  $8 < C'<C$. Indeed,  we will have then,  for $\gamma$ large enough, that  $T_j -\delta_\gamma=T_j - C \frac{\log \gamma}{\gamma} < S_j - C' \frac{\log \gamma}{\gamma}=S_j-\delta^\prime_\gamma$ because $\sum_j |T_j-S_j| = \Oc_\gamma^\varepsilon(1/\gamma)$. For the sake of notational simplicity, the new constant $C'$ and $\delta_\gamma'$ will be denoted $C$ and $\delta_\gamma$.

Now, by Eq. \eqref{eq:ode_ab}, for any $u<v$, we have:
\begin{align}
\label{eq:ode_ab_explicit} 
   Y_{u,v}
   = &
   \sqrt{\gamma}\  W_{u, v}
   - 
   \half \gamma (v-u)
   +
   r_{u,v}
   +
   \lambda p
   \int_{u}^v
   e^{-Y_w} \ dw \ .
\end{align}

Fix $K>0$. Let us call $S_j -\tau$ the backward hitting time of $-\log \gamma + K$ by $Y$ starting from $S_j$. Here $\tau_K:=\tau_K^\gamma \in[0,+\infty]$ is defined by
\begin{equation*}
\tau_K := \inf\{ t \le S_j \; ; \; Y_{S_j -t} = -\log \gamma + K \} \ .
\end{equation*}
Observe that $S_j-\tau_K$ is not a stopping time. Also, we can define a random variable $w_K:=w_K^\gamma \in [0,+\infty]$ by writing
\begin{equation*}
\tau_K = w_K \cfrac{2\log \gamma}{\gamma} .
\end{equation*}
At this point, we do not even know that $\tau_K$ or $w_K$ are bounded. By convention, $\tau_K = +\infty$ if the backward hitting time is never reached.

Starting from Eq. \eqref{eq:ode_ab_explicit}, take now $v= u+w \frac{2 \log \gamma}{\gamma}$ with $w \in (0, w_K]$ and  $[u,v] \subset [S_j -\tau_K, S_j]$, and write:
\begin{align*}
Y_{u,v}
\leq & \sqrt{\gamma} W_{u,v} - \half \gamma (v-u) + r_{u,v} + \lambda p e^{-K + \log \gamma} (v-u) \\
\leq & \sqrt{\gamma} W_{u,v} - w \log \gamma + r_{u,v} + 2 \lambda p e^{-K} w \log \gamma \ .
\end{align*}
Then divide by $\log \gamma$ and invoke L\'evy's modulus of continuity theorem (see Eq. \eqref{eq:modulcont}) to find that for all $w$ in $[0,w_K]$ and $[u,v] \subset [S_j -\tau_K, S_j]$:
\begin{align*}
\frac{Y_{u,v}}{\log \gamma} 
& \leq \frac{r_{u,v}}{\log \gamma} + 2 \sqrt{w} - w (1- 2 \lambda p e^{-K}) + o_{\gamma, \P}(1) \\
& \leq \frac{r_{u,v}}{\log \gamma} + 1 - (\sqrt{w} - 1)^2 + w 2 \lambda p e^{-K} + o_{\gamma, \P}(1) \ .
\end{align*}
By shifting from $K$ to $K+\log 2 \lambda p$, there is no loss of generality in writing
\begin{align}
   \label{eq:Yuv_upper_bound}
   \frac{Y_{u,v}}{\log \gamma} & 
   \leq \frac{r_{u,v}}{\log \gamma} + 1 - (\sqrt{w} - 1)^2 + w e^{-K} + o_{\gamma, \P}(1)\ .
\end{align}
This will allow our subsequent reasoning to use absolute constants. The reasoning is decomposed into two steps. The first Step 1.1 uses the idea that on a segment large enough, the drift term in Eq. \eqref{eq:ode_ab_explicit} is the main term, overpowering the oscillation of Brownian motion, so that $-\log \gamma + K$ is reached. This yields a bound on $\tau_K$, or equivalently $w_K$. The second Step 1.2 uses this estimate and refines it in order to pinpoint the location of $w_K$ around $1$ (for $\gamma$ and $K$ large enough).

\bigskip

{\underline{Step 1.1: The initial estimate: $w^\gamma_K \leq 6$ for $K=100$.}}\\
Note that the following statements are trivially equivalent: $\tau_K \leq 12 \frac{\log \gamma}{\gamma}$, or $w_K \leq 6$, or $Y$ reaches $-\log \gamma + K$ in the time interval $[S_j - 12 \frac{\log \gamma}{\gamma}, S_j]$. 

Let us start by proving, by contraposition, that with high probability, all (or any of) these statements hold. As such, we start by supposing the converse, i.e. $w_K>6$. 

Thanks to Eq. \eqref{eq:Yuv_upper_bound} and Lemma \ref{lemma:residual_control}, used to bound $r_{u,v}$, we have that for all $w$ in $(0,6]$ and $[u,v]  \subset [S_j - 12 \frac{\log \gamma}{\gamma}, S_j]$
\begin{align}
\label{eq:bound_Y_uv}
\frac{Y_{u,v}}{\log \gamma}
\leq o_{\gamma, \P}^\varepsilon(1) + 1 - (\sqrt{w} - 1)^2 + w e^{-K}  \ .
\end{align}
Note that for $ u \in [S_j -\tau_K, S_j]$, we have
$$-\log \gamma +K \leq Y_u \leq \Oc_\gamma^\varepsilon(1) \ ,  $$
which combined with Eq. \eqref{eq:ubpourY}, implies
 $-1 + o_\gamma^K(1) \leq \frac{ Y_{u} }{ \log \gamma }\leq o_\gamma^\varepsilon(1)$. This way the smallest possible LHS in Eq. \eqref{eq:bound_Y_uv} is $-1 + o_\gamma^{\varepsilon,K} (1)$. As such, we find a contradiction as soon as there is a $w \in (0,6]$ such that:
$$
-1
> o_{\gamma, \P}^{\varepsilon, K}(1) + 1 - (\sqrt{w} - 1)^2 + w e^{-K}  \ .
$$
This is rearranged as:
$$
(\sqrt{w}-1)^2
> 2 + w e^{-K} + o_{\gamma, \P}^{\varepsilon, K}(1) .
$$
Therefore, we have a contradiction for $w>(1+\sqrt{2})^2 \approx 5.8284$ and $K = K(w)$ large enough. For example $w=6$ and $K=100$.

\bigskip

{\underline{Step 1.2: Pinpointing the location of $w_K^\gamma$: 
$\lim_{K\to \infty} \limsup_{\gamma \to \infty} \left| w_K^\gamma - 1 \right| \ = \ 0$
}}\\
Thanks to the previous estimate, we now know that for $K$ large enough ($K \geq 100$ after shift by $\log 2\lambda p$), the segment $[S_j - \tau_K, S_j]$ has length $\Oc_\gamma( \frac{\log \gamma}{\gamma} )$. We can thus safely apply Lemma \ref{lemma:residual_control} to control the term $r_{u,v}$ in Eq. \eqref{eq:Yuv_upper_bound} for all $[u,v] \subset [S_j - \tau_K, S_j]$. This yields that for all $w$ in $(0,w_K]$ and $[u,v] \subset [S_j -\tau_K, S_j]$:
\begin{align}
\label{eq:Yuv_upper_bound_refined}
\frac{Y_{u,v}}{\log \gamma} & 
\leq o_{\gamma, \P}^\varepsilon(1) + 1 - (\sqrt{w} - 1)^2 + w e^{-K}  \ .
\end{align}
Choose now $v=S_j$, $u=S_j-\tau_K$ and therefore $w=w_K$. We have then 
\begin{equation*}
\cfrac{\log(\varepsilon/(1-\varepsilon))}{\log \gamma} + 1 - \frac{K}{\log \gamma} \leq o_{\gamma, \P}^\varepsilon(1) + 1 - (\sqrt{w_K}  - 1)^2 + w_K e^{-K} \ .
\end{equation*}
This implies
\begin{equation*}
   \left( \sqrt{w_K} - 1 \right)^2 
   \leq o_{\gamma, \P}^{\varepsilon, K}(1) + w_K e^{-K} 
   \leq o_{\gamma, \P}^{\varepsilon, K}(1) + 6 e^{-K} 
   \ .
\end{equation*}
Hence
\begin{equation*}
   \left| \sqrt{w_K} - 1 \right|
   \leq \sqrt{o_{\gamma, \P}^{\varepsilon, K}(1) + 6 e^{-K}}
   \ .
\end{equation*}
Multiplying by $\sqrt{w_K} + 1 \leq 4$, we find
\begin{equation*}
   \left| w_K  - 1 \right|
   \leq 4 \sqrt{o_{\gamma, \P}^{\varepsilon, K}(1) + 6 e^{-K}}
   \ .
\end{equation*}
    
We are now done with Step 1.2. In particular, it proves that $w_K < C/2$ for $K = K(C)$ and $\gamma$ large enough, which also finishes proving Step 1.

\bigskip

\begin{rmk} 
\label{rmk:Reda}
From the backward formula of Lemma \ref{lemma:path_transforms}, we see that:
\begin{align*}
  & Y_{t-u, t}\\
= & \sqrt{\gamma} W_{t-u, t} - \frac{\gamma}{2} u 
    -
    \log \left( 1 - \lambda p e^{-Y_t} \int_{t-u}^t e^{r_{v,t} + \sqrt{\gamma} W_{v,t} - \frac{\gamma}{2} (t-v)} dv \right) \\
= & \sqrt{\gamma} W_{t-u, t} - \frac{\gamma}{2} u 
    -
    \log \left( 1 - \lambda p e^{-Y_t} \int_{0}^u e^{r_{t-v,t} + \sqrt{\gamma} W_{t-v,t} - \frac{\gamma}{2} v} dv \right) \ .
\end{align*}

From the forward formula, on the other hand:
\begin{align*}
  & Y_{t, t+u}\\
= & \sqrt{\gamma} W_{t, t+u} - \frac{\gamma}{2} u 
    +
    \log \left( 1 + \lambda p e^{-Y_t} \int_{t}^{t+u} e^{-r_{t,v} - \sqrt{\gamma} W_{t,v} + \frac{\gamma}{2} (v-t)} dv \right) \\
= & \sqrt{\gamma} W_{t, t+u} - \frac{\gamma}{2} u 
    +
    \log \left( 1 + \lambda p e^{-Y_t} \int_{0}^{u} e^{-r_{t,t+v} - \sqrt{\gamma} W_{t,t+v} + \frac{\gamma}{2} v} dv \right) \ ,
\end{align*}
which easier to control in order to prove a divergence to $\infty$.

As such, we plan on using a forward estimate once we have reached level $-\log \gamma + K$. Let us record the expression for further use:
\begin{align}
   \label{eq:Y_forward}
   Y_{t, t+u}
 = & \sqrt{\gamma} W_{t, t+u} - \frac{\gamma}{2} u 
     +
     \log \left( 1 + \lambda p e^{-Y_t} \int_{0}^{u} e^{-r_{t,t+v} - \sqrt{\gamma} W_{t,t+v} + \frac{\gamma}{2} v} dv \right) \ .
 \end{align}
 
 \end{rmk}

\bigskip
\noindent
{\bf Step 2: Conclusion.} Now, we know that $Y$ reaches $-\log \gamma + K$ at $S_j - \tau_K \in [S_j-\delta_\gamma, S_j]$ thanks to the threshold of $C>2$. Moreover, by Step 1.2, the gap between $S_j-\delta_\gamma$ and $S_j- \tau_K$ is sufficiently large as it is equal to $(\tfrac{C}{2}-w_K) \tfrac{2 \log \gamma}{\gamma} > \zeta \tfrac{\log \gamma}{\gamma}$ for some small random variable $\zeta$.

Now, because of the reasoning at the beginning of Step 1, $T_j-\delta_\gamma < S_j-\delta_\gamma'$, with $\delta_\gamma' = C' \frac{\log \gamma}{\gamma}$, $2 < C' < C$, it suffices to prove
$$
   \lim_{\gamma \to \infty} \  \int_{S_j-\delta_\gamma'}^{S_j} e^{-Y_u} \ du
   = \infty \ .
$$
To simplify notations we denote $\delta'_\gamma$ by $\delta_\gamma$ and $C'$ by $C$. Let us rearrange Eq. \eqref{eq:ode_ab_explicit} as
\begin{align*}
     & \lambda p \int_{S_j-\delta_\gamma}^{S_j} e^{-Y_u}\\
   = & 
   Y_{S_j-\delta_\gamma, S_j}
   -
   \sqrt{\gamma} \  W_{S_j-\delta_\gamma, S_j}
   + 
   \half \gamma \delta_\gamma
   -r_{S_j-\delta_\gamma, S_j} \\
   = & Y_{S_j-\delta_\gamma, S_j-\tau_K}  + Y_{S_j-\tau_K, S_j} 
       - \sqrt{\gamma} W_{S_j-\delta_\gamma, S_j} + \half C \log \gamma + o_\gamma^\varepsilon(\log \gamma)\\
   = & Y_{S_j-\delta_\gamma, S_j-\tau_K}  + (Y_{S_j} + \log \gamma - K) 
     - \sqrt{\gamma} W_{S_j-\delta_\gamma, S_j}  + \half C \log \gamma +o_\gamma^\varepsilon(\log \gamma) \ .
 \end{align*}     
   By using Levy's modulus of continuity for Brownian motion, we get 
  \begin{align*}  
    & \lambda p \int_{S_j-\delta_\gamma}^{S_j} e^{-Y_u} du\\
    \geq & Y_{S_j-\delta_\gamma, S_j-\tau_K}  - \sqrt{\gamma} (1+o_{\gamma, \mathbb P} (1) ) \sqrt{2 \delta_\gamma \log \gamma}
          + \left( \frac{C}{2} + 1 \right)\log \gamma + o_\gamma^\varepsilon(\log \gamma) \\
   \geq & Y_{S_j-\delta_\gamma, S_j-\tau_K}
        + (\tfrac{C}{2} + 1 - \sqrt{2C} )\log \gamma + o_\gamma^\varepsilon(\log \gamma) \\
   = & Y_{S_j-\delta_\gamma, S_j-\tau_K}
        + \left( \sqrt{\tfrac{C}{2}} - 1 \right)^2 \log \gamma + o_\gamma^\varepsilon(\log \gamma) \ .
\end{align*}

Now, we use a pretty loose lower bound. As in the proof of Lemma \ref{lemma:residual_control}, because spikes are separated, we know that $Y_{S_j-\delta_\gamma} \leq \Oc_\gamma^\varepsilon(1)$. This is more precisely given in Eq. \eqref{eq:ubpourY}. As such
$$
Y_{S_j-\delta_\gamma, S_j-\tau_K} 
=
-\log \gamma + K - Y_{S_j-\delta_\gamma}
\geq -(1+o^\varepsilon_\gamma (1))\log \gamma \ .
$$ 
Reinjecting this inequality in the previous lower bound yields
\begin{align*}
  \lambda p \int_{S_j-\delta_\gamma}^{S_j} e^{-Y_u}du
 \geq  -\log \gamma
      + \left( \sqrt{\frac{C}{2}} - 1 \right)^2 \log \gamma + o_\gamma^\varepsilon(\log \gamma) \ .
\end{align*}
This lower bound goes to infinity as $\gamma$ goes to infinity for 
$$
-1 + \left( \sqrt{\tfrac{C}{2}} - 1 \right)^2 > 0 \ .
$$
This equivalent to $ C > 8 $. We are done in this regime.

\subsection{Intuitions on the slow feedback regime \texorpdfstring{$C>2$}{C>2} }
\label{subsection:rmk:Cge2}

In this section we explain why we conjecture that the conclusions of the slow feedback regime proved for $C>8$ should in principle hold for $C>2$ -- even if a rigorous argument eludes us for now. Observe first that Proposition \ref{proposition:more_reduction_to_damping} and the previous Step 1 are valid for $C>2$.  Hence assuming only $C>2$, we know that $Y$ reaches $-\log \gamma + K$ at $S_j-\tau_K \in [S_j-\delta_\gamma, S_j]$ as soon as $C>2$. Moreover, we have seen that $S_j -(S_j -\tau_K)=\tau_K \gtrsim \tfrac{2 \log \gamma}{\gamma}$ is sufficient. 

Now let $\tau$ be {\it any} time such that $S_j-\tau  \in [S_j-\delta_\gamma, S_j]$ and $Y_{S_j-\tau} \leq -\log \gamma + K$. Recall Proposition \ref{proposition:more_reduction_to_damping} and Eq. \eqref{eq:int-eqcge2}. It thus suffices to find a $\tau \geq \tau_K$ such that
$$
   \lim_{\gamma \to \infty} \int_{S_j-\tau}^{S_j} e^{-Y_u} \, du  = \infty  \ .
$$
Let $0\le u \le v$. Recall Eq. \eqref{eq:ode_ab_explicit}:
\begin{align}
   Y_{u,v} = & \sqrt{\gamma}\  W_{u, v}-\half \gamma (v-u)+ r_{u,v}+ \lambda p \int_{u}^v e^{-Y_w} \ dw  \ ,
\end{align}
and the forward integration of Lemma \ref{lemma:path_transforms}, which gives (see Remark \ref{rmk:Reda} )
\begin{align}  
 \label{eq:Y_forward2}
   Y_{u, v}
 = & \sqrt{\gamma}\ W_{u, v} - \frac{\gamma}{2} (v-u) 
     +
     \log \left( 1 + \lambda p e^{-Y_u} \int_{0}^{v-u} e^{-r_{u,u+w} - \sqrt{\gamma} W_{u,u+w} + \frac{\gamma}{2} w} \, dw \right) \ .
 \end{align}
 Combining the two last expressions we obtain
 \begin{equation*}
 \lambda p \int_{u}^v e^{-Y_w} \ dw =   \log \left( 1 + \lambda p e^{-Y_u} \int_{0}^{v-u} e^{-r_{u,u+w} - \sqrt{\gamma} W_{u,u+w} + \frac{\gamma}{2} w} \, dw \right) \ .
\end{equation*}
We specialise now this expression to $u=S_j -\tau, v=S_j$ to get that 
\begin{equation}
\begin{split}
\lambda p  \int_{S_j-\tau}^{S_j} e^{-Y_u} \, du & =  \log \left( 1 + \lambda p e^{-Y_{S_j -\tau}} \int_{0}^{\tau} e^{-r_{S_j-\tau,S_j-\tau+w} - \sqrt{\gamma} W_{S_j- \tau,S_j-\tau+w} + \frac{\gamma}{2} w} \, dw \right) \\
 &= \log\left( 1 + \lambda p e^{-K} \gamma \int_{0}^{\tau} e^{ o_\gamma^\varepsilon(\log \gamma) - \sqrt{\gamma} W_{S_j-\tau, S_j-\tau+w} + \frac{\gamma}{2} w } \, dw \right)\\
 & \ge  \log\left( 1 + \lambda p e^{-K} \gamma \tau \, e^{ \tau^{-1} \int_0^\tau  \big[ o_\gamma^\varepsilon(\log \gamma) - \sqrt{\gamma} W_{S_j-\tau, S_j-\tau+w} + \frac{\gamma}{2} w \big]  \, dw } \right) \ .
\end{split}
\end{equation}
Here, for the second equality we used Lemma \ref{lemma:residual_control} and for the last inequality Jensen inequality. Using $\log(1+x) \ge \log x$ for $x>0$ we get 
 \begin{equation*}
\lambda p  \int_{S_j-\tau}^{S_j} e^{-Y_u} \, du \gtrsim  {\mathcal O}^K_\gamma (1) + \log (\gamma \tau) + o^{\varepsilon}_\gamma (\log \gamma) \, +\,  \cfrac{\gamma \tau}{4} \,  - \, \frac{1}{\tau} \int_0^\tau \sqrt{\gamma} \, W_{S_j-\tau, S_j-\tau+w} \, dw  \ .
\end{equation*}
Since $\frac{\log \gamma}{\gamma} \lesssim_{K, \varepsilon} \, \tau \,  \lesssim_{K,\varepsilon}  \tfrac{\log \gamma}{\gamma}$, we are done if we can prove that
\begin{equation*}
\frac{1}{\tau} \int_0^\tau \sqrt{\gamma} \, W_{S_j-\tau, S_j-\tau+w} \, dw = o_\gamma^{\varepsilon, K} (\log \gamma)\ .
\end{equation*}

Recall that we have some freedom in the choice of the constant $K$ and that $\tau=\tau_K$ depends on $K$. Then, if we could find $K$ such that $S_j-\tau_K$ is a generic point for Brownian motion, i.e. where the Law of Iterated Logarithm \cite[Chapter 2, Theorem 1.9]{revuz2013continuous} is satisfied, instead of the full L\'evy modulus of continuity \cite[Chapter 1, Theorem 2.7]{revuz2013continuous}, we could conclude the proof for $C>2$.

\bigskip 
\noindent


\bigskip
\section{Acknowledgements}
The authors are grateful to R. Ch\'etrite for useful initial discussions. C.P is supported by the ANR project `ESQuisses', grant number  ANR-20-CE47-0014-01, the ANR project `Quantum Trajectories',  grant  number ANR-20-CE40-0024-01 and the program `Investissements d'Avenir'  ANR-11-LABX-0040 of the French National Research Agency. C. P. is also  supported by the ANR project Q-COAST ANR-19-CE48-0003. This work has been also supported by the 80 prime project StronQU of MITI-CNRS:  `Strong noise limit of stochastic processes and application of quantum 
systems out of equilibrium'.

\appendix

\section{Scale function and time change \texorpdfstring{\cite{bernardin2018spiking}}{[BCC+22]}
}
\label{app:scale function}
Let us recall the expressions of the scale function and change of time used in the paper \cite{bernardin2018spiking}. We define the scale function $h = h_\gamma$ of Eq. \eqref{eq:sde} as:
\begin{equation}
\label{eq:expressionh_gamma}
h_\gamma(x) =
   x_0
   +
   \int_{x_0}^x
   dy \ 
   \exp\left[ \frac{2\lambda}{\gamma} g(y) \right]
   \ ,
   \end{equation}
   where
   \begin{equation}
\label{eq:expressiong}
   g(y) := 
   p \left( \frac{1}{y} + \log \frac{1-y}{y} \right)
   +
   (1-p)\left( \frac{1}{1-y} + \log \frac{y}{1-y} \right)
   \ .   
\end{equation}
%

Thanks to the Dambis-Dubins-Schwartz theorem, if $\pi^\gamma$ denotes the solution of Eq. \eqref{eq:sde}, there is a Brownian motion $\beta$ starting from $x_0 \in (0,1)$ such that:
\begin{equation}
\label{eq:DDS-equation}
 \pi_t^\gamma = h_\gamma^{\langle -1 \rangle}\left( \beta_{T_t^\gamma} \right) \ .
\end{equation}
The time change $T^\gamma$ is given by:
\begin{align}
\label{eq:TTgamma0}
dT_t^\gamma = & \gamma \  h_\gamma'(\pi_t^\gamma)^2\  \left[ \pi_t^\gamma (1-\pi_t^\gamma) \right] ^2 \ dt =  \gamma \ e^{\frac{4\lambda}{\gamma} g(\pi_t^\gamma)} \  \left[ \pi_t^\gamma (1-\pi_t^\gamma) \right]^2 \ dt \ 
\end{align}
and the inverse change is given by \cite[Subsection 3.2]{bernardin2018spiking}
\begin{align*}
d \left[T^\gamma_\ell\right]^{\langle-1 \rangle} =  \
              \frac{1}{\gamma \ 
              \left[ h_\gamma' \circ h_\gamma^{\langle-1 \rangle}(\beta_\ell)\right]^2 \
              h_\gamma^{\langle-1 \rangle}(\beta_\ell)^2 \
              (1-h_\gamma^{\langle-1 \rangle}(\beta_\ell))^2} \ d\ell  =:  \ \varphi_\gamma(\beta_\ell) d\ell \ .
\end{align*}
For $s\le t$ and $y\in {\mathbb R}$ we denote $L_{s,t}^y (\beta)$ the occupation time of level $y$ by $\beta$ during the time interval $(s,t)$. Via the occupation time formula:
\begin{align}
\label{eq:ttgammaL}
\left[T^\gamma_\ell\right]^{\langle-1 \rangle} = & \ \int_0^\ell \ \varphi_\gamma(\beta_u) \ du
            =: \ \int_\R \varphi_\gamma(a) \ L_\ell^a(\beta) \ da \ 
\end{align}
and the weak convergence of $\varphi_\gamma$ to the mixture $(2\lambda {p})^{-1} \delta_0 + (2\lambda (1-{p}))^{-1} \delta_1$, we can deduce the almost sure convergence:
\begin{align}
\label{eq:cv_T_inv}
\left[T^\gamma_\ell\right]^{\langle-1 \rangle}_{\ell} \; \stackrel{\gamma \rightarrow \infty}{\longrightarrow} 
   \; & 
   \frac{1}{2\lambda {p}} L^0_\ell(\beta) 
 + \frac{1}{2\lambda (1-{p})} L^1_\ell(\beta) 
   \ ,
\end{align}
uniformly on all compact sets of the form $[0,L]$. 

\bigskip

We observe finally that introducing 
\begin{equation*}
\sigma_t
   :=
   \inf \left\{ \ell \geq 0, \ 
   \frac{L_\ell^0(\beta)}{2\lambda{p}} + \frac{L_\ell^1(\beta)}{2 \lambda (1-{p})} > t \right\} \ ,
\end{equation*}
we have that 
\begin{equation*}
(\xb_t)_{t\ge 0} =(\beta_{\sigma_t})_{t\ge 0} \ ,
\end{equation*}
the equality being in law.

\section{Asymptotic analysis of a singular additive functional}
\label{section:analysis}

Throughout the paper, it is important to control the damping term
$$ \int_s^t a^\gamma_u \ du
 = \int_s^t a( \pi^\gamma_u ) \ du \ ,$$
 where $a^\gamma_u$ is given by Eq. \eqref{eq:a_u}.
 More generally, for any positive map $f: [0,1] \rightarrow \R_+$, we define the additive functional:
\begin{align}
\label{eq:additive_functional_A}
A^\gamma_{s,t}(f) := \int_s^t f( \pi^\gamma_u ) \, du \ .
\end{align}

\begin{lemma}
\label{lemma:additive_functional}
We have the exact expression:
$$
  A^\gamma_{s,t}(f)
= \int_0^1  \ 
  \frac{ f(x) }{\gamma  x^2 (1-x)^2 } e^{-\frac{2\lambda}{\gamma} g(x)} L^{h_\gamma(x)}_{T_s^\gamma, T_t^\gamma}(\beta) \ dx \ .
$$
In particular, with $f={\mathds 1}$ we get that 
\begin{equation}
\label{eq:A1}
A_{s,t}^\gamma ({\mathds 1}) =\frac{1}{\gamma} \int_0^1 \cfrac{e^{-\tfrac{2\lambda}{\gamma} g(x) }}{x^2 (1-x)^2} \  L_{T_s^\gamma, T_t^\gamma}^{h_\gamma (x)} (\beta) \ dx \ = t-s \ .
\end{equation}

\end{lemma}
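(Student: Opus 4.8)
The idea is to transport the time integral defining $A^\gamma_{s,t}(f)$ through the Dambis--Dubins--Schwarz representation \eqref{eq:DDS-equation} and then apply the occupation time formula for the auxiliary Brownian motion $\beta$. First I would carry out the change of time $\ell = T_u^\gamma$. Since $T^\gamma$ is continuous and strictly increasing by \eqref{eq:TTgamma0}, the inverse change gives $du = \varphi_\gamma(\beta_\ell)\,d\ell$ with $\varphi_\gamma$ as defined in the Appendix, and $\pi_u^\gamma = h_\gamma^{\langle -1\rangle}(\beta_\ell)$. Hence
\[
  A^\gamma_{s,t}(f)
  = \int_{T_s^\gamma}^{T_t^\gamma} f\bigl( h_\gamma^{\langle -1\rangle}(\beta_\ell) \bigr)\,\varphi_\gamma(\beta_\ell)\, d\ell \ .
\]

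Next I would invoke the occupation time formula. As $\beta$ is a standard Brownian motion with jointly continuous local times $\bigl( L^a_{s',t'}(\beta) \bigr)$, for any nonnegative Borel function $\psi$ one has $\int_{T_s^\gamma}^{T_t^\gamma} \psi(\beta_\ell)\, d\ell = \int_{\R} \psi(a)\, L^{a}_{T_s^\gamma, T_t^\gamma}(\beta)\, da$. Applying this with $\psi(a) = f\bigl(h_\gamma^{\langle -1\rangle}(a)\bigr)\varphi_\gamma(a)$ and then substituting $a = h_\gamma(x)$, $da = h_\gamma'(x)\,dx$ --- legitimate because $h_\gamma$ is a $C^1$-diffeomorphism from $(0,1)$ onto $\R$, the defining integral \eqref{eq:expressionh_gamma} diverging at both endpoints since $g(y)\to +\infty$ as $y\to 0^+$ and $y\to 1^-$ --- one gets, using $\varphi_\gamma\bigl(h_\gamma(x)\bigr) = \bigl(\gamma\, h_\gamma'(x)^2 x^2 (1-x)^2\bigr)^{-1}$ and cancelling one factor $h_\gamma'(x)$,
\[
  A^\gamma_{s,t}(f)
  = \int_0^1 \frac{f(x)}{\gamma\, h_\gamma'(x)\, x^2 (1-x)^2}\, L^{h_\gamma(x)}_{T_s^\gamma, T_t^\gamma}(\beta)\, dx \ .
\]
Substituting $h_\gamma'(x) = \exp\bigl[\tfrac{2\lambda}{\gamma} g(x)\bigr]$ from \eqref{eq:expressionh_gamma} yields exactly the claimed expression.

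Finally, the special case $f = \mathds 1$ follows trivially from the definition: $A^\gamma_{s,t}(\mathds 1) = \int_s^t du = t-s$, so reading the general formula with $f = \mathds 1$ gives \eqref{eq:A1}.

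\textbf{Main obstacle.} The manipulation is essentially bookkeeping through known identities; the only point requiring genuine care is the change of variables $a = h_\gamma(x)$ across the singular endpoints $0$ and $1$. This is resolved by the surjectivity of $h_\gamma$ onto $\R$ together with the almost sure continuity of the map $a \mapsto L^a_{T_s^\gamma, T_t^\gamma}(\beta)$ and its compact support in the space variable, which guarantee that all integrals involved are finite and that the substitution (and any use of Tonelli) is valid for nonnegative measurable $f$.
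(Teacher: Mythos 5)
Your proposal is correct and follows essentially the same route as the paper: the time change $\ell = T_u^\gamma$ through the Dambis--Dubins--Schwarz representation \eqref{eq:DDS-equation}, the occupation time formula for $\beta$, and the substitution $a = h_\gamma(x)$ with $h_\gamma'(x) = \exp\bigl[\tfrac{2\lambda}{\gamma} g(x)\bigr]$, plus the trivial observation $A^\gamma_{s,t}(\mathds 1) = t-s$. Your added justification of the change of variables (divergence of $h_\gamma$ at the endpoints, continuity and compact support of the local time) is a harmless elaboration of points the paper leaves implicit.
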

\begin{proof}
Recalling Eq. \eqref{eq:TTgamma0} and Eq. \eqref{eq:DDS-equation} we have that
\begin{align*}
  & A^\gamma_{s,t}(f) = \int_s^t  \ f( \pi^\gamma_u )  \, du =  \int_s^t   \  \frac{f( \pi^\gamma_u )}{\gamma \exp\left[ \frac{4\lambda}{\gamma} g(\pi^\gamma_u) \right] \left[\pi^\gamma_u (1-\pi^\gamma_u)\right]^2 } \, dT_u^\gamma\\
= & \int_s^t \frac{f \circ h_\gamma^{\langle -1 \rangle}( \beta_{T_u^\gamma} ) }
                  {\gamma \exp\left[ \frac{4\lambda}{\gamma} g \circ h_\gamma^{\langle -1 \rangle}( \beta_{T_u^\gamma} ) \right]
             h_\gamma^{\langle -1 \rangle}( \beta_{T_u^\gamma} )^2 (1-h_\gamma^{\langle -1 \rangle}( \beta_{T_u^\gamma} ))^2 } \, dT_u^\gamma \\
= & \int_{T_s^\gamma}^{T_t^\gamma}
             \frac{f \circ h_\gamma^{\langle -1 \rangle}( \beta_{\ell} ) }
                  {\gamma \exp\left[ \frac{4\lambda}{\gamma} g \circ h_\gamma^{\langle -1 \rangle}( \beta_{\ell} ) \right]
             h_\gamma^{\langle -1 \rangle}( \beta_{\ell} )^2 (1-h_\gamma^{\langle -1 \rangle}( \beta_{\ell} ))^2 }\,  d\ell \ .
\end{align*}
Invoking the occupation time formula:
\begin{align*}
 A^\gamma_{s,t}(f) 
= & \int_{\R} \ 
             \frac{f \circ h_\gamma^{\langle -1 \rangle}( y ) }
                  {\gamma \left( h_\gamma' \circ h_\gamma^{\langle -1 \rangle}( y ) \right)^2
             h_\gamma^{\langle -1 \rangle}( y )^2 (1-h_\gamma^{\langle -1 \rangle}( y ))^2 }
             L^y_{T_s^\gamma, T_t^\gamma}(\beta) \, dy  \\
= & \int_0^1 \ 
             \frac{f(x) }
                  {\gamma  h_\gamma'(x)
             x^2 (1-x)^2 }
	L^{h_\gamma(x)}_{T_s^\gamma, T_t^\gamma}(\beta) \, dx  \\
= & \int_0^1 \ 
             \frac{ f(x) }
                  {\gamma
             x^2 (1-x)^2 } e^{-\frac{2\lambda}{\gamma} g(x)} L^{h_\gamma(x)}_{T_s^\gamma, T_t^\gamma}(\beta) \, dx  \ .
\end{align*}
\end{proof}

\begin{lemma}
\label{lemma:additive_functional2} 
Let $\varepsilon \in (0,1)$ be fixed and $\eta_\gamma>0$ such that $\lim_{\gamma \to \infty} \eta_\gamma =0$. Then, $\mathbb P$ a.s., for any $0 \le s \le t \le H$, we have that 
\begin{align*}  
\gamma \int_s^t \pi_u^\gamma\,  \mathds{1}_{\left\{ \eta_\gamma \leq \pi_u^\gamma \leq 1-\half \varepsilon \right\}} du
= \ \Oc_\gamma^\varepsilon(\log |{\eta_\gamma}|) \ .
\end{align*}
\end{lemma}

\begin{proof}
By the occupation time formula in Lemma \ref{lemma:additive_functional}:
\begin{align*}
    \gamma \int_s^t \pi_u^\gamma  \mathds{1}_{\left\{ \eta_\gamma \leq \pi_u^\gamma \leq 1-\half \varepsilon \right\}} \, du
= & \ \gamma \int_s^t \pi^\gamma_u \mathds{1}_{\left\{ \eta_\gamma \leq \pi_u^\gamma \leq \half \right\}} \, du
    + \gamma \int_s^t \pi_u^\gamma  \mathds{1}_{\left\{ \half \leq \pi^\gamma_u \leq 1-\half \varepsilon \right\}} \, du\\
= & \gamma \int_0^1  \ 
  \frac{ x \mathds{1}_{\left\{ \eta_\gamma \leq x \leq \half \right\}} }{\gamma  x^2 (1-x)^2 } e^{-\frac{2\lambda}{\gamma} g(x)} L^{h_\gamma(x)}_{T_s^\gamma, T_t^\gamma}(\beta) \, dx
  + \Oc_\gamma^\varepsilon(1) \\
= & \int_{\eta_\gamma}^\half  \ 
  \frac{1}{x (1-x)^2 } e^{-\frac{2\lambda}{\gamma} g(x)} L^{h_\gamma(x)}_{T_s^\gamma, T_t^\gamma}(\beta) \, dx
  + \Oc_\gamma^\varepsilon(1) \ .
 \end{align*} 
The previous $\Oc_\varepsilon (1)$ term results from Corollary 2.4 in \cite{bernardin2018spiking} which states that the time spent by $\pi$ in some fixed (i.e. independent of $\gamma$) interval $[a,b] \subset(0,1)$ is of order $1/\gamma$. 

To control the local time increment we observe that
\begin{align*}
L^{h_\gamma(x)}_{T_s^\gamma, T_t^\gamma}(\beta) \ \leq  \  \sup_{u \in [0,H]} \sup_{a \in \R} L^{a}_{T_u^\gamma}(\beta)\  \leq  \ \sup_{\ell \in [0, T_H^{\langle -1 \rangle}]} \sup_{a \in \R} L^{a}_{\ell}(\beta)\ ,
\end{align*}
and we recall that, by Eq. \eqref{eq:cv_T_inv},  $T_H^{\langle -1 \rangle}$ converges to a finite limit. As such, this is controlled by the maximal local time over a finite (random)  time interval. Hence 
\begin{align*}  
\gamma \int_s^t \pi_u^\gamma \mathds{1}_{\left\{ \eta_\gamma \leq \pi^\gamma_u \leq 1-\half \varepsilon \right\}} \ du
\lesssim &
  \int_{\eta_\gamma}^\half  \ 
  x^{-1} \ e^{-\frac{2\lambda}{\gamma} g(x)} \ dx
  + \Oc_\gamma^\varepsilon(1) \\
= & \ \Oc_\gamma (\log|{\eta_\gamma}|) + \Oc_\gamma^\varepsilon(1) \\
= & \ \Oc_\gamma^\varepsilon(\log |{\eta}_\gamma|) \ .
\end{align*}

\end{proof}

\section{Few technical lemmas}

\subsection{Logistic transform. Proof of Lemma \ref{subsection:logit} }
\label{app:logistic}

 In order to lighten notation we omit the superscript $\gamma$ in the next equations. For a given smooth function $f$, It\^o formula yields
\begin{align*}
    df(\pi_t)
= & \left( -f'(\pi_t) \lambda (\pi_t - p)
           +
           \half \gamma f''(\pi_t) \pi_t^2 (1-\pi_t)^2
     \right) dt
  + f'(\pi_t) \pi_t (1-\pi_t) \sqrt{\gamma} dW_t \ .
\end{align*}
As such, $f(\pi_t)$ has constant volatility term, say $\sqrt{\gamma}$, if and only if:
\begin{align*}
    & 1 = f'(x)x(1-x) 
\Longleftrightarrow \ 
    f(x) = \log\frac{x}{1-x} + \log \kappa \ ,
\end{align*}
for a certain choice of constant $\kappa>0$. The first claim is proved.

Now, choosing $\kappa=1$ for convenience, let us derive the SDE for $Y=f(\pi)$. By using $ f'(x) = \tfrac{1}{x(1-x)}, \quad  f''(x) = \tfrac{2x-1}{x^2(1-x)^2} \ ,$ we obtain:
\begin{align*}
    dY_t
= & \left( -\lambda \frac{\pi_t - p}
                         {\pi_t(1-\pi_t)}
           +
           \half \gamma \left( 2\pi_t-1 \right)
     \right) dt
  + \sqrt{\gamma} dW_t \\
 = & \left( \lambda p
           + \lambda p e^{-Y_t}
           -
           \lambda (1 - p) e^{Y_t}
           -
           \lambda (1 - p)
           +
           \half \gamma \left( 2\pi_t - 1 \right)
     \right) dt
  + \sqrt{\gamma} dW_t \
  \ ,
\end{align*}
hence the first expression \eqref{eq:champ1} with $2 \pi_t - 1 = \frac{2}{1+e^{-Y_t}} - 1 = \tanh(\tfrac{Y_t}{2})$.

For the second expression \eqref{eq:champ2}, recalling that
$ dW_t = dB_t + \sqrt{\gamma} \left( \xb_t - \pi_t \right) dt \ ,$
we get:
\begin{align*}
    dY_t
 = & \left( \lambda (2p-1)
           + \lambda p e^{-Y_t}
           -
           \lambda (1 - p) e^{Y_t}
           +
           \half \gamma \left( 2\pi_t - 1 \right)
           + \gamma \left( \xb_t - \pi_t \right)
     \right) dt
  + \sqrt{\gamma} dB_t \\
 = & \left( \lambda (2p-1)
           + \lambda p e^{-Y_t}
           -
           \lambda (1 - p) e^{Y_t}
           + \gamma \left( \xb_t - \half \right)
     \right) dt
  + \sqrt{\gamma} dB_t  \ .
\end{align*}

\subsection{Path transform} 
\label{app:pathtransform}
We start by writing:
$$
\alpha_{s,t} = \beta_{s,t} + \int_s^t e^{-\alpha_u} \, du \ .
$$
\bigskip 

\noindent {\bf Backward integration:} Consider $t$ as fixed and $s$ as varying. Then differentiate in $s$ the expression for $s \leq t$:
$$
\alpha_{s,t} = \beta_{s,t} + \frac{1}{e^{\alpha_t}} \int_s^t e^{\alpha_{u,t}} du \ .
$$
It yields:
$$
\frac{d}{ds}\left(
\alpha_{s,t} - \beta_{s,t}
\right)
=
-\frac{1}{e^{\alpha_t}} e^{\alpha_{s,t}} 
=
-\frac{1}{e^{\alpha_t}} e^{\alpha_{s,t} - \beta_{s,t}} e^{\beta_{s,t}} \ .
$$
Equivalently:
$$
\frac{d}{ds}\left(
e^{-(\alpha_{s,t} - \beta_{s,t})}
\right)
=
\frac{1}{e^{\alpha_t}} e^{\beta_{s,t}} \ .
$$
Integrating on $[s,t]$ gives:
$$ 1 - e^{-(\alpha_{s,t} - \beta_{s,t})} =  \frac{1}{e^{\alpha_t}} \int_s^t e^{\beta_{u,t}} du \ , $$
which gives the backward formula.

\bigskip 

\noindent
{\bf Forward integration:} The other way around, fix $s$ and take $t$ as varying. Then differentiate in $s$ the expression for $s \leq t$:
$$
\alpha_{s,t} = \beta_{s,t} + \frac{1}{e^{\alpha_s}} \int_s^t e^{-\alpha_{s,u}} \, du \ .
$$
It yields:
$$
\frac{d}{dt}\left(
\alpha_{s,t} - \beta_{s,t}
\right)
=
\frac{1}{e^{\alpha_s}} e^{-\alpha_{s,t}} 
=
\frac{1}{e^{\alpha_s}} e^{-(\alpha_{s,t} - \beta_{s,t})} e^{-\beta_{s,t}} \ .
$$
Equivalently:
$$
\frac{d}{dt}\left(
e^{\alpha_{s,t} - \beta_{s,t}}
\right)
=
\frac{1}{e^{\alpha_s}} e^{-\beta_{s,t}} \ .
$$
Integrating between on $[s,t]$ gives:
$$ e^{\alpha_{s,t} - \beta_{s,t}} - 1 =  \frac{1}{e^{\alpha_s}} \int_s^t e^{-\beta_{s,u}} \, du \ , $$
which gives the forward formula.

\bibliographystyle{halpha}
\bibliography{biblio}

\end{document}